\documentclass[reqno, a4paper, 11pt]{amsart}
\usepackage{amssymb,url, color, mathrsfs}
\usepackage[colorlinks=true, bookmarks=true, pdfstartview=FitH, pagebackref=true, linktocpage=true]{hyperref}
\usepackage[short,nodayofweek]{datetime}
\usepackage[pagewise,running,mathlines,displaymath, switch]{lineno}
\usepackage{mathabx}

\usepackage[a4paper, vmargin={30mm,30mm},hmargin={20mm,20mm}, top=2.5cm, bottom=2.5cm, left=3cm, right=3cm]{geometry}

\usepackage[T5]{fontenc}
\usepackage[utf8]{inputenc}

\theoremstyle{plain}
\numberwithin{equation}{section}
\newtheorem{theorem}{Theorem}[section] 
\newtheorem{lemma}{Lemma}[section] 

\newtheorem{proposition}{Proposition}[section]

%\newcommand{\ps}{p_{\mathsf S}}
%\newcommand{\pc}{p_{\mathsf{JL}}}
%\DeclareMathOperator{\td}{d\mspace{-2mu}}
%\DeclareMathOperator{\const}{const}
%\DeclareMathOperator{\dx}{d\mspace{-1mu} \it x}
%\DeclareMathOperator{\Rset}{\mathbf{R}}
%\DeclareMathOperator{\vol}{\textrm{vol}}
%\DeclareMathOperator{\ask}{\boldsymbol{???}}
%\newcommand*{\aint}{\mathop{\ooalign{$\int$\cr$-$}}}
%\DeclareMathOperator{\fint}{\int\mkern-17.8mu-\mkern-0.0mu-}

%\newcommand*{\aint}{\mathop{\ooalign{$\int$\cr$-$}}}
%\DeclareMathOperator{\fint}{\int\mkern-17.8mu-\mkern-0.0mu-}

%\parskip=8pt

%\setlength{\baselineskip}{2pt} 
%\definecolor{brown}{rgb}{0.5,0,0}
%\definecolor{backgroundcolor}{rgb}{0.98, 0.92, 0.73}

\def\N{\mathbb N}% natural number
 %real number
\def\C{\mathbb C} %complex number
\def\Z{\mathbf Z} %integers

\def\bT{\mathbf T} %1d Torus
\def\vk{\mathbf k}%wavenumber
\def\cS{\mathcal S} %set

\def\bB{\mathscr B} %bilinear form
\def\cE{\mathcal E} %energy
\def\Csf{\mathsf c} %constant

\def\sN{\mathsf N}
\def\sK{\mathsf M}

\author[Tiến-Tài Nguyễn]{Tiến-Tài Nguyễn }
\address[Tiến-Tài Nguyễn]{ University of Science, Vietnam National University,   Hanoi, Vietnam}
\email{\href{mailto: Tiến-Tài Nguyễn <nttai.hus@vnu.edu.vn>}{nttai.hus@vnu.edu.vn}}

\begin{document}
\allowdisplaybreaks

\setpagewiselinenumbers
\setlength\linenumbersep{100pt}
%\modulolinenumbers[1]
%\linenumbers

\title[Rayleigh-Taylor instability]{Influence of capillary number on nonlinear Rayleigh-Taylor instability to the Navier-Stokes-Korteweg equations}

\begin{abstract}
Motivated by Bresch, Desjardins, Gisclon and Sart \cite{BDGS08}, in this paper, we study the influence of capillary number on an instability result related to the Navier-Stokes-Korteweg equations. Precisely, we investigate the instability of  a steady-state profile with a heavier fluid lying above a lighter fluid, i.e., to study the Rayleigh–Taylor instability problem if   the capillary number is below the critical value. After writing the nonlinear equations in a perturbed form, the first part is to provide a spectral analysis showing that, there exist \textit{possibly multiple} normal modes to the linearized equations by following  the operator method of Lafitte-Nguyễn  \cite{LN20}.    Hence, we  construct a \textit{wide class} of initial data for which the nonlinear perturbation problem departs from the equilibrium, based on the finding of \textit{possibly multiple} normal modes.  Using a refined  framework of Guo-Strauss \cite{GS95}, we prove the nonlinear instability.
\end{abstract}

\date{\bf \today \; at \, \currenttime}

\subjclass[2010]{35J40, 47A45, 47A55 , 76D45, 76E09}

\keywords{nonlinear Rayleigh--Taylor instability, Navier-Stokes-Korteweg model,  spectral analysis, energy estimates}

\maketitle
% \tableofcontents

\section*{Acknowledgements}

%The author owns his debt to Prof. Jean-Marc Delort and Prof. Olivier Lafitte for their supervision during PhD.  
This paper is  finalized while the author attends a summer school at Institut d'Etudes Scientifiques de Cargèse (IESC), Corsica-France,  in June, 2023. The author is grateful to the warm hospitality of IESC and  the financial support of Universit\'e Sorbonne Paris Nord during his stay.  The author  also wishes to thank Prof O. Lafitte and Prof. J. Rauch for their advice. 
 
\section{Introduction}

In 1883, Lord Rayleigh \cite{Str83} studied the linear stability of the eigenvalue problem for two layers of gravity-driven incompressible and inviscid fluids, the heavy one is on the top of the light one and addressed the general stability criterion. Rayleigh’s work was taken up by Taylor \cite{Tay50} in 1950, in a more general set-up considering the effect of any accelerating field.  This Rayleigh-Taylor (RT) instability appears and has attracted much attention  due to both  its physical and mathematical importance. For a detailed physical comprehension of the RT instability, we refer to the book of Chandrasekhar \cite{Cha61} and some physics reports \cite{Kull91, Zhou17_1, Zhou17_2}. Mathematically speaking,  the effect of physical parameters such as internal surface tension \cite{WT12}, magnetic field \cite{JJW16, Wan19} on the nonlinear RT instability has been widely studied.  In this paper,  we study the influence of capillary number on nonlinear instability of an increasing RT density profile. This work is  motivated by Bresch, Desjardins, Gisclon and Sart \cite{BDGS08}, where  they investigated the expression of the largest growth rate in a small regime of the characteristic length $L_0$ of  RT density profile (see $L_0$ in Lemma \ref{LemEigenvalueReal}) by following an asymptotic analysis initiated by Cherfils-Clerouin, Lafitte and Raviart \cite{CCLR01}. 

Let us describe  the formulation of the main problem. Let $\bT$ be the usual $1D$-torus , $L>0$ and $\Omega =(2\pi L\bT)^2\times (0,1)$. We are  concerned with the following Navier-Stokes-Korteweg equations, introduced firstly by Korteweg \cite{Kor01}, describing the dynamics of an incompressible viscous fluid endowed with internal capillarity
\begin{equation}\label{EqNSKor}
\begin{cases}
\partial_t \rho+ \text{div}(\rho u) =0,\\
\partial_t(\rho u)+ \text{div}( \rho u\otimes u) -\mu \Delta u +\sigma \nabla \rho  \Delta \rho + \nabla P =-g\rho  e_3 ,\\
\text{div} u=0.
\end{cases}
\end{equation}
where $t\geqslant 0, x=(x_1,x_2,x_3)\in  \Omega$. The unknowns $\rho := \rho(x,t)$, $u :=  u(x,t)$ and $P :=P(x,t)$ denote the density, the velocity and the pressure of the fluid, respectively.   $e_3=(0,0,1)^T$ is the vertical unit vector. The parameter $\sigma>0$ is the capillary coefficient, $\mu>0$ is the viscosity coefficient, and $g > 0$ is the gravity constant. 

Denote $'=d/dx_3$ and $I=(0,1)$ Let $\rho_0$ and $P_0$ be two functions depending on $x_3\in I$ such that 
\begin{equation}\label{RelationRhoP}
P_0'= -\sigma \rho_0' \rho_0'' - g\rho_0.
\end{equation}
Hence, $(\rho, u, P)(t,x) = (\rho_0, 0, P_0)(x_3)$ is a steady-state of  Eq. \eqref{EqNSKor}. Let us define the perturbations 
\begin{equation}
\theta = \rho - \rho_0, \quad u=u-0, \quad q=P-P_0,
\end{equation}
and write Eq. \eqref{EqNSKor} in the following perturbed form
\begin{equation}\label{EqNSK_Pertur}
\begin{cases}
\partial_t \theta + \rho_0' u_3 = -u\cdot\nabla\theta,\\
\rho_0 \partial_t u +\nabla q -\mu\Delta u +\sigma( \rho_0' \Delta\theta e_3 + \rho_0'' \nabla\theta)+ g\theta e_3 \\
\qquad  =-\theta\partial_t u -(\rho_0+\theta) u\cdot \nabla u - \sigma \nabla\theta \Delta\theta,\\
\text{div} u=0.
\end{cases}
\end{equation}
Let us specify the initial data
\begin{equation}\label{InitialData}
(\theta, u)|_{t=0}= (\theta_0, u_0) \text{ in }\Omega,
\end{equation}
and the boundary conditions
\begin{equation}\label{BoundLinearized}
u|_{\partial \Omega}=0 \quad\text{for any }t>0.
\end{equation}
The initial data should satisfy the compatibility condition $\text{div}u_0 = 0$.  
%First we consider the case that the steady-state is unstable. Hence, we have further assumptions on the

The Rayleigh-Taylor problem is to study the stability of the equilibrium $(\rho_0, 0, P_0)(x_3)$ to the nonlinear equations \eqref{EqNSKor}, i.e of  the stability of the trivial equilibrium to the nonlinear equations \eqref{EqNSK_Pertur} where the density profile $\rho_0$ satisfies
\begin{equation}\label{RhoPositive}
\rho_0 \in C^\infty(I), \quad \min_I \rho_0 >0, \quad  \min_I \rho_0' >0.
\end{equation}
Let  us define the critical capillary number 
\begin{equation}\label{CapCritical}
\sigma_c := \sup_{\vartheta \in H_0^1(I)} \frac{g\int_I \rho_0' \vartheta^2}{\int_I (\rho_0')^2 (\vartheta')^2} \in (0,+\infty).
\end{equation}
Note that $\sigma_c$ is positive and finite due to the assumption \eqref{RhoPositive}.  As $\sigma <\sigma_c$, we first present the normal mode ansatz of the linearized equations \eqref{EqLinearized} showing the existence of \textit{possibly multiple} normal mode solutions, see Theorems \ref{ThmModeNormal}, \ref{ThmLinear} in Section \ref{SectMainResults}, thanks to Lafitte-Nguyễn's operator method \cite{LN20}. Section \ref{SectLinear} is devoted to the proof of the linear theorems. Once the linear instability is proven, we move to show the nonlinear RT instability, see Theorem \ref{ThmUnstable} in Section \ref{SectNonlinear}. After establishing \textit{a priori} energy estimates in Section \ref{SectEnergyEstimates}, we will give  the proof of  nonlinear instability in Section \ref{SectNonlinearInstability}, which is in the same spirit of the RT instability problem with Navier-slip boundary conditions \cite{Tai22}. 

In this work,  we are not only concerned with proving the nonlinear instability  in the regime $\sigma < \sigma_c$, that is showing that there exists at least one initial value  for which an instability develops as shown by Guo-Strauss \cite{GS95} (see also \cite{Gre00}), but we are able to prove a more general result on a \textit{wide class} of initial data, based on the existence of \textit{possibly multiple} normal mode solutions to the linearized equations.

We remark that  the recent paper of Zhang \cite{Zha22} and  of Li-Zhang \cite{LZ23} only prove the nonlinear RT instability in a small regime of  capillary number, i.e. $0<\sigma\ll 1$. Our nonlinear result shows that $0<\sigma <\sigma_c$ is also the subcritical regime of nonlinear RT instability.
%In the second part, we continue our study of \eqref{EqNSK_Pertur} by showing that the problem is nonlinearly stable for $\sigma>\sigma_c$. 
%An interesting question arises whether the nonlinear system is stable when the steady density is lighter with increasing height, i.e. $\rho_0' \leq 0$. Our last theorem is to give a partial result on the stability. 

\noindent{\textbf{Notations}.} We use  the notation $a\lesssim b$ to mean that $a\leq Cb$ for a universal constant $C > 0$,  which depends on the parameters of the problem and does not depend on the data. Throughout this paper, we write $H^s$ instead of $H^s(\Omega)$ for $s\geq 0$ and $\int$ instead of $\int_\Omega$. 

\section{The main results}\label{SectMainResults}

\subsection{The linear instability}
By omitting all nonlinear terms in \eqref{EqNSK_Pertur}, we obtain the linearized equations
\begin{equation}\label{EqLinearized}
\begin{cases}
\partial_t \theta + \rho_0' u_3 = 0,\\
\rho_0 \partial_t u +\nabla q -\mu\Delta u +\sigma( \rho_0' \Delta \theta e_3 +\rho_0'' \nabla\theta )+ g\theta e_3 = 0,\\
\text{div} u=0.
\end{cases}
\end{equation}
with the boundary condition \eqref{BoundLinearized}.  Following \cite{Cha61}, let $\vk=(k_1,k_2)\in (L^{-1}\Z)^2$ and in what follows, we always write $k=|\vk|= \sqrt{k_1^2+k_2^2}$, we look for normal mode solutions of Eq. \eqref{EqLinearized}-\eqref{BoundLinearized}, which are of the form 
\begin{equation}\label{NormalMode}
\begin{cases}
\theta(t,x)= e^{\lambda t} \cos(k_1x_1+k_2x_2) \eta(x_3), \\
u_1(t,x)= e^{\lambda t} \sin(k_1x_1+k_2x_2) v_1(x_3),\\
u_2(t,x)= e^{\lambda t} \sin(k_1x_1+k_2x_2) v_2(x_3),\\
u_3(t,x) = e^{\lambda t} \cos(k_1x_1+k_2x_2) \phi(x_3),\\
q(t,x) = e^{\lambda t}\cos(k_1x_1+k_2x_2) \pi(x_3).
\end{cases}
\end{equation}
In this situation, $\lambda \in \C$ with $\text{Re}\lambda>0$ is called as a characteristic value of the linearized equations \eqref{EqLinearized} after Chandrasekhar \cite{Cha61}.  Substituting \eqref{NormalMode} into Eq. \eqref{EqLinearized}-\eqref{BoundLinearized}, we obtain the ODE system in $(0,1)$, 
\begin{equation}\label{SystMode_1}
\begin{cases}
\lambda \eta+ \rho_0' \phi =0,\\
\lambda\rho_0 v_1- k_1 \pi - \mu (-k^2 v_1+v_1'')= \sigma k_1 \rho_0'' \eta, \\
\lambda\rho_0 v_2 - k_2 \pi - \mu (-k^2 v_2+v_2'')= \sigma k_2 \rho_0'' \eta,\\
\lambda\rho_0 \phi +\pi' -\mu (-k^2\phi+\phi'') = -\sigma \big( \rho_0' (-k^2\eta+\eta'')+ \rho_0''\eta' \big) -g\eta,\\
k_1v_1+k_2 v_2+\phi'=0.
\end{cases}
\end{equation}
withe the boundary conditions
\begin{equation}
v_1(0)=v_2(0)=\phi(0)=0, \quad\text{and } v_1(1)=v_2(1)=\phi(1)=0.
\end{equation}
Then eliminating  $\eta$ by using $\eqref{SystMode_1}_1$, we obtain 
\begin{equation}\label{SystMode_2}
\begin{cases}
-\lambda^2 \rho_0 v_1 +\lambda k_1 \pi -\lambda \mu (k^2 v_1-v_1'')= \sigma k_1\rho_0'\rho_0''\phi, \\
-\lambda^2 \rho_0 v_2 +\lambda k_2 \pi -\lambda \mu (k^2 v_2-v_2'')= \sigma k_2\rho_0'\rho_0''\phi,\\
\lambda^2 \rho_0\phi+\lambda \pi' + \lambda\mu(k^2\phi-\phi'')=- \sigma (\rho_0')^2 k^2 \phi + \sigma (\rho_0'(\rho_0'\phi)')' +g\rho_0'\phi,\\
k_1v_1+k_2 v_2+\phi'=0.
\end{cases}
\end{equation}
From two first equations of  \eqref{SystMode_2} and  $\eqref{SystMode_2}_4$ also, we have
\begin{equation}\label{EqPressure}
\pi = \frac1{\lambda k^2} ( -\lambda^2\rho_0 \phi'- \lambda\mu(k^2\phi'-\phi''') +\sigma k^2 \rho_0'\rho_0''\phi).
\end{equation}
Substituting $q$ from \eqref{EqPressure} from $\eqref{SystMode_2}_3$,  we arrive at a fourth-order ODE \begin{equation}\label{4thOrderEqPhi}
\lambda^2 (k^2\rho_0\phi - (\rho_0\phi')' )+ \lambda\mu (\phi^{(4)}-2k^2\phi''+k^4\phi) = gk^2\rho_0'\phi + \sigma k^2 ((\rho_0')^2\phi')'  -\sigma k^4(\rho_0')^2\phi,
\end{equation}
with the boundary conditions 
\begin{equation}\label{BoundODE}
\phi(0)=\phi'(0)=0, \quad\text{and } \phi(1)=\phi'(1)=0.
\end{equation}
Necessarily, we have:
\begin{lemma}\label{LemEigenvalueReal}
All characteristic values $\lambda$ are real and uniformly  bounded in $\vk$ by $\sqrt{\frac{g}{L_0}}$, where $L_0^{-1} := \max_I \frac{\rho_0'}{\rho_0}$ is the characteristic length of density profile.
\end{lemma}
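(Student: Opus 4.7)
The plan is to test the fourth-order ODE \eqref{4thOrderEqPhi} against $\bar\phi$ and integrate by parts over $I$, exploiting the boundary conditions \eqref{BoundODE} to kill all boundary contributions. Every integral produced is a real quadratic expression in $\phi$, so the outcome will be an equation of the form
\begin{equation*}
A\lambda^2 + B\lambda - C = 0,
\end{equation*}
where
\begin{align*}
A &= \int_I \rho_0\bigl(k^2|\phi|^2 + |\phi'|^2\bigr)\,dx_3, \\
B &= \mu \int_I \bigl(|\phi''|^2 + 2k^2|\phi'|^2 + k^4|\phi|^2\bigr)\,dx_3, \\
C &= gk^2 \int_I \rho_0'|\phi|^2\,dx_3 - \sigma k^2\int_I (\rho_0')^2|\phi'|^2\,dx_3 - \sigma k^4\int_I (\rho_0')^2|\phi|^2\,dx_3.
\end{align*}
By \eqref{RhoPositive}, $A>0$ and $B\geqslant 0$; all three quantities are real.

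For reality, observe that the quadratic polynomial $AX^2 + BX - C$ has real coefficients, so its roots are either both real or complex conjugates of each other. In the second case, Vieta's formula gives $\lambda + \bar\lambda = -B/A \leqslant 0$, which contradicts the defining condition $\mathrm{Re}\,\lambda > 0$ for a characteristic value. Hence every characteristic value is real (and in fact positive).

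For the uniform bound, I drop the non-negative term $\lambda B$ on the left and the two non-positive capillary contributions to $C$ on the right, obtaining
\begin{equation*}
\lambda^2 \,\leqslant\, \frac{C}{A} \,\leqslant\, \frac{gk^2\int_I \rho_0'|\phi|^2\,dx_3}{\int_I \rho_0\bigl(k^2|\phi|^2 + |\phi'|^2\bigr)\,dx_3} \,\leqslant\, g\,\frac{\int_I \rho_0'|\phi|^2\,dx_3}{\int_I \rho_0|\phi|^2\,dx_3} \,\leqslant\, g\,\max_I \frac{\rho_0'}{\rho_0} = \frac{g}{L_0},
\end{equation*}
which is the desired estimate, independent of $\vk$.

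The only step that needs care is the bookkeeping of the integrations by parts for the sixth- and fourth-order terms; in particular the identity $\int_I \bigl(\phi^{(4)} - 2k^2\phi'' + k^4\phi\bigr)\bar\phi\,dx_3 = \int_I \bigl(|\phi''|^2 + 2k^2|\phi'|^2 + k^4|\phi|^2\bigr)\,dx_3$ and the analogous treatment of $((\rho_0')^2\phi')'\bar\phi$ must produce no boundary terms, which is guaranteed by \eqref{BoundODE}. I do not anticipate any serious obstacle beyond this routine computation.
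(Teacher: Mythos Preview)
Your proof is correct and follows essentially the same approach as the paper: multiply the ODE by $\bar\phi$, integrate by parts using \eqref{BoundODE} to obtain the real quadratic relation $A\lambda^2+B\lambda-C=0$, deduce reality of $\lambda$ from $\mathrm{Re}\,\lambda>0$, and then drop non-negative terms to extract $\lambda^2\leqslant g/L_0$. Your Vieta's-formula argument for reality is a cosmetic repackaging of the paper's imaginary-part computation (both amount to $2\,\mathrm{Re}\,\lambda=-B/A\leqslant 0$), and the bound step is identical; the passing reference to ``sixth-order terms'' is a slip since the ODE is fourth order, but this does not affect the argument.
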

Since all characteristic values $\lambda$ are real, we restrict to real-valued functions in the linear analysis.  As $k$ being fixed, we state the following $k$-subcritical regime of capillary number to investigate Eq. \eqref{4thOrderEqPhi}-\eqref{BoundODE}, thanks to the operator method  initiated by Lafitte and Nguyễn \cite{LN20}.
 
 \begin{theorem}\label{ThmModeNormal}
Let $\rho_0$ satisfy \eqref{RhoPositive} and $k$ be fixed. We define 
 \begin{equation}\label{kCapCritical}
\sigma_c(k) :=  \sup_{\vartheta \in H_0^1(I)} \frac{g\int_I \rho_0' \vartheta^2}{\int_I (\rho_0')^2 (k^2 \vartheta^2+ (\vartheta')^2)} \in (0,\sigma_c).
 \end{equation}
Hence, for all  $0<\sigma <\sigma_c(k)$,  there exists a finite sequence of real characteristic values
\[
+\infty > \lambda_1(k,\sigma) >\lambda_2(k,\sigma) > \dots > \lambda_\sN(k,\sigma) >0
\]
such that for each $\lambda_j$,  there is a  smooth solution $\phi_j \in H_0^\infty(I)$ to \eqref{4thOrderEqPhi}-\eqref{BoundODE}  as $\lambda=\lambda_j$.
\end{theorem}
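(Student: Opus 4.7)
The plan is to recast \eqref{4thOrderEqPhi}-\eqref{BoundODE} as a generalized eigenvalue problem and then apply the min-max principle. I would introduce on $H_0^2(I)$ the three self-adjoint operators
\begin{equation*}
\mathcal{A}\phi := k^2\rho_0\phi - (\rho_0\phi')',\qquad \mathcal{B}\phi := \phi^{(4)} - 2k^2\phi'' + k^4\phi,
\end{equation*}
\begin{equation*}
\mathcal{C}\phi := gk^2\rho_0'\phi + \sigma k^2((\rho_0')^2\phi')' - \sigma k^4(\rho_0')^2\phi,
\end{equation*}
so that \eqref{4thOrderEqPhi} becomes $\mathcal{C}\phi = \lambda^2\mathcal{A}\phi + \lambda\mu\mathcal{B}\phi$. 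Integration by parts against \eqref{BoundODE} gives
\begin{equation*}
\langle\mathcal{A}\phi,\phi\rangle = \int_I\rho_0\bigl(k^2\phi^2 + (\phi')^2\bigr),\qquad \langle\mathcal{B}\phi,\phi\rangle = \int_I(\phi''-k^2\phi)^2,
\end{equation*}
\begin{equation*}
\langle\mathcal{C}\phi,\phi\rangle = k^2\Bigl(g\int_I\rho_0'\phi^2 - \sigma\int_I(\rho_0')^2\bigl(k^2\phi^2 + (\phi')^2\bigr)\Bigr),
\end{equation*}
so $\mathcal{A}$ and $\mathcal{B}$ are positive definite on $H_0^2(I)$, and the hypothesis $\sigma<\sigma_c(k)$ is exactly the statement that $\langle\mathcal{C}\phi,\phi\rangle>0$ for some $\phi\in H_0^2(I)\setminus\{0\}$ (the density of $C_c^\infty(I)$ in $H_0^1(I)$ in the $H^1$-topology ensures that the supremum in \eqref{kCapCritical} is unchanged when the test functions are restricted to $H_0^2(I)$).

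For each fixed $\lambda>0$, I would then study the generalized Rayleigh quotient
\begin{equation*}
R_\lambda(\phi) := \frac{\langle\mathcal{C}\phi,\phi\rangle}{\lambda^2\langle\mathcal{A}\phi,\phi\rangle + \lambda\mu\langle\mathcal{B}\phi,\phi\rangle},\qquad \phi\in H_0^2(I)\setminus\{0\}.
\end{equation*}
Its denominator defines an inner product on $H_0^2(I)$ equivalent to $\|\cdot\|_{H^2}^2$, and the operator representing $\mathcal{C}$ in this inner product is compact. Hence the min-max values
\begin{equation*}
s_j(\lambda) := \max_{\substack{V_j\subset H_0^2(I)\\ \dim V_j = j}}\,\min_{\phi\in V_j\setminus\{0\}} R_\lambda(\phi),\qquad j\geq 1,
\end{equation*}
form a decreasing real sequence accumulating at $0$, and $\lambda>0$ is a characteristic value of \eqref{4thOrderEqPhi}-\eqref{BoundODE} if and only if $s_j(\lambda)=1$ for some $j$. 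Since the numerator of $R_\lambda$ is $\lambda$-independent and the denominator is strictly increasing in $\lambda$, each positive branch $s_j(\cdot)$ is continuous and strictly decreasing; moreover $s_j(\lambda)\to+\infty$ as $\lambda\to 0^+$ along any $j$-dimensional subspace on which $\langle\mathcal{C}\cdot,\cdot\rangle$ is positive, and $s_j(\lambda)\to 0^+$ as $\lambda\to+\infty$.

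Let $\sN$ denote the Morse index of $\langle\mathcal{C}\cdot,\cdot\rangle$ on $H_0^2(I)$, equivalently (by the density argument) the number of positive eigenvalues of the second-order Sturm--Liouville operator $\mathcal{C}$ on $L^2(I)$ with Dirichlet boundary conditions. The condition $\sigma<\sigma_c(k)$ forces $\sN\geq 1$, and $\sN<+\infty$ is a general property of Sturm--Liouville problems. For each $j\in\{1,\dots,\sN\}$, the intermediate value theorem then provides a unique $\lambda_j(k,\sigma)>0$ with $s_j(\lambda_j)=1$, and the associated eigenvector of the generalized eigenvalue problem yields a nontrivial $\phi_j\in H_0^2(I)$ solving \eqref{4thOrderEqPhi} at $\lambda=\lambda_j$. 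Smoothness $\phi_j\in H_0^\infty(I)$ follows by iterative elliptic bootstrap applied to $\lambda_j\mu\mathcal{B}\phi_j = \mathcal{C}\phi_j - \lambda_j^2\mathcal{A}\phi_j$, since $\mathcal{B}$ is uniformly fourth-order elliptic with smooth coefficients and the boundary conditions are preserved.

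The main obstacle is twofold. First, the degenerate limit $\lambda\to 0^+$, where $\lambda^2\mathcal{A}+\lambda\mu\mathcal{B}$ loses coercivity, must be handled carefully: this is precisely where the density identification of the Morse index on $H_0^2(I)$ with that on $H_0^1(I)$ is needed to link $\sN$ back to the geometric definition \eqref{kCapCritical} of $\sigma_c(k)$. Second, the \emph{strict} ordering $\lambda_1>\lambda_2>\dots>\lambda_\sN$ requires ruling out coincident crossings of distinct branches $s_j$ and $s_{j+1}$ at the level $1$; this simplicity I would extract from the fourth-order ODE \eqref{4thOrderEqPhi}-\eqref{BoundODE} via a Wronskian/shooting argument exploiting the separated Dirichlet conditions, showing that the kernel at each $\lambda_j$ is one-dimensional.
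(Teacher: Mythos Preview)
Your approach is correct and is essentially the same variational/spectral method as the paper's, repackaged. The paper introduces the symmetrized compact self-adjoint operator $S_{k,\lambda,\sigma}=P_{k,\lambda}^{-1/2}Q_{k,\sigma}P_{k,\lambda}^{-1/2}$ on $L^2(I)$, where $P_{k,\lambda}=\lambda\mathcal A+\mu\mathcal B$ and $Q_{k,\sigma}=\mathcal C$ in your notation, and studies its positive eigenvalues $\gamma_j(k,\lambda,\sigma)$; a characteristic value is then a solution of $\gamma_j(\lambda)=\lambda$, which is exactly your condition $s_j(\lambda)=1$ since $s_j(\lambda)=\gamma_j(\lambda)/\lambda$. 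The key difference is in the monotonicity step: the paper invokes Kato's analytic perturbation theory to obtain differentiability of $\gamma_j$ in $\lambda$ and then computes $\frac{d}{d\lambda}(1/\gamma_j)>0$ explicitly (their Lemmas~3.1--3.2), whereas you obtain strict decrease of $s_j$ directly from the fact that the denominator $\lambda^2\langle\mathcal A\phi,\phi\rangle+\lambda\mu\langle\mathcal B\phi,\phi\rangle$ is pointwise strictly increasing in $\lambda$. Your route is more elementary on this point and avoids the perturbation-theoretic machinery; the paper's route, on the other hand, makes the compactness and spectral structure completely explicit via $S_{k,\lambda,\sigma}$. Your identification of $\sN$ with the number of positive Dirichlet eigenvalues of the second-order operator $\mathcal C$ matches the paper's Proposition~3.5. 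Finally, the strict ordering $\lambda_1>\cdots>\lambda_\sN$ that you flag as needing a separate simplicity argument is not actually proved in the paper either: it simply lists the $\gamma_j$ with strict inequalities and deduces the ordering of the $\lambda_j$ from that together with monotonicity, so your concern is a genuine (if minor) refinement rather than a gap in your own argument.
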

It can be seen that $\sigma_c(k)$ is decreasing for $k\in(0,+\infty)$ and $\sigma_c(k) \nearrow \sigma_c$ as $k \searrow 0$. Hence, for each $\sigma <\sigma_c$, we define the set
\[
\cS := \{ \vk  \in (L^{-1}\Z)^2\{0\}: \sigma<\sigma_c(k)\} \neq \emptyset.
\]
As a result of Theorem \ref{ThmModeNormal}, we obtain our next theorem, showing  \textit{possibly multiple}  normal mode solutions \eqref{NormalMode} to the linearized equations \eqref{EqLinearized} for some wavenumber $\vk$.
\begin{theorem}\label{ThmLinear}
Let $\rho_0$ satisfy \eqref{RhoPositive}  and  $0<\sigma <\sigma_c$.  For each $\vk=(k_1,k_2)\in \cS$, the  linearized equations \eqref{EqLinearized}-\eqref{BoundLinearized} admit possibly multiple normal mode  solutions of the form $(1\leq j\leq\sN)$
\[
\begin{cases}
\theta_j(t,x)= e^{\lambda_j(\vk,\sigma) t} \cos(k_1x_1+k_2x_2) \eta_j(\vk,\sigma,x_3), \\
u_{1,j}(t,x)= e^{\lambda_j(\vk,\sigma) t} \sin(k_1x_1+k_2x_2) v_{1,j}(\vk,\sigma,x_3),\\
u_{2,j}(t,x)= e^{\lambda_j(\vk,\sigma) t} \sin(k_1x_1+k_2x_2) v_{2,j}(\vk,\sigma,x_3),\\
u_{3,j}(t,x) = e^{\lambda_j(\vk,\sigma) t} \cos(k_1x_1+k_2x_2) \phi_j(\vk,\sigma,x_3),\\
p_j(t,x) = e^{\lambda_j(\vk,\sigma) t}\cos(k_1x_1+k_2x_2) q_j(\vk,\sigma,x_3),
\end{cases}
\]
where $\eta_j, v_{1,j}, v_{2,j}, \phi_j$ and $q_j$ are real-valued and smooth functions. 
\end{theorem}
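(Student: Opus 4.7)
The plan is to invert the elimination procedure that produced the fourth-order ODE \eqref{4thOrderEqPhi} from the full ODE system \eqref{SystMode_1}. Starting from the eigenpairs $(\lambda_j,\phi_j)$ furnished by Theorem \ref{ThmModeNormal} for $\vk\in\cS$, I reconstruct the remaining profiles $\eta_j, v_{1,j}, v_{2,j}, q_j$ so that the whole of \eqref{SystMode_1} together with the boundary conditions is satisfied; the ansatz \eqref{NormalMode} then delivers a genuine normal-mode solution of \eqref{EqLinearized}-\eqref{BoundLinearized}.

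Fix $\vk\in\cS$, so that $k=|\vk|>0$. Equation $\eqref{SystMode_1}_1$ forces the choice $\eta_j:=-\rho_0'\phi_j/\lambda_j$, which is well-defined since $\lambda_j>0$ by Lemma \ref{LemEigenvalueReal} and Theorem \ref{ThmModeNormal}. Motivated by the symmetric structure of the two horizontal momentum equations and the incompressibility constraint $\eqref{SystMode_1}_5$, I set
\[
v_{1,j}:=-\frac{k_1}{k^2}\phi_j',\qquad v_{2,j}:=-\frac{k_2}{k^2}\phi_j',
\]
so that $k_1 v_{1,j}+k_2 v_{2,j}+\phi_j'=0$ holds automatically. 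Finally, I define $q_j$ by the right-hand side of \eqref{EqPressure}. The verification that the full system \eqref{SystMode_1} holds is then essentially free: $\eqref{SystMode_1}_1$ is true by construction; plugging $v_{i,j}$ and $q_j$ into the horizontal momentum equations reduces, after dividing by the common factor $k_i/k^2$, to the very identity that defines $q_j$; and the vertical momentum equation $\eqref{SystMode_1}_4$, once $\eta_j$ is re-expressed via $\phi_j$ and $q_j$ is substituted, becomes precisely the fourth-order ODE \eqref{4thOrderEqPhi}, which $\phi_j$ solves by Theorem \ref{ThmModeNormal}. The Dirichlet conditions on $v_{1,j}, v_{2,j}, \phi_j$ at $x_3\in\{0,1\}$ follow directly from $\phi_j(0)=\phi_j'(0)=\phi_j(1)=\phi_j'(1)=0$ stated in \eqref{BoundODE}.

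Since the statement is essentially a reverse-engineering exercise, there is no serious analytical obstacle once Theorem \ref{ThmModeNormal} is in hand. The two facts that make the inversion legitimate are $\lambda_j>0$ (so one may divide by $\lambda_j$ in both the definition of $\eta_j$ and in \eqref{EqPressure}) and $k>0$ (so one may divide by $k^2$); both hold by our hypotheses. Smoothness and real-valuedness of the five profiles then follow from $\phi_j\in H_0^\infty(I)$, $\rho_0\in C^\infty(I)$, and the reality of $\lambda_j$. The only bookkeeping item of any substance is to check by direct substitution that the pressure formula \eqref{EqPressure} is consistent with the horizontal momentum balance under the ansatz $v_{i,j}=-k_i\phi_j'/k^2$, but this is a routine algebraic computation with no hidden difficulty.
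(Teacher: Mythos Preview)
Your proof is correct. The paper takes a slightly different route: after defining $\eta_j$ and $q_j$ exactly as you do, it recovers $v_{1,j}$ by solving the second-order boundary value problem $\eqref{SystMode_2}_1$ on $(0,1)$ with Dirichlet data, and then sets $v_{2,j}$ from the divergence constraint. Your explicit choice $v_{i,j}=-k_i\phi_j'/k^2$ is in fact the unique solution of that ODE (the operator $v\mapsto -\lambda_j^2\rho_0 v-\lambda_j\mu(k^2v-v'')$ is coercive on $H_0^1(I)$ for $\lambda_j>0$), so the two constructions coincide. Your direct ansatz is cleaner: it treats the two horizontal components symmetrically, makes the verification of $\eqref{SystMode_2}_{1,2}$ a single computation, and sidesteps the paper's division by $k_2$ when defining $v_{2,j}$, which is awkward for wavenumbers with $k_2=0$.
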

Note from Lemma \ref{LemEigenvalueReal} that 
\begin{equation}\label{DefineLambda}
0<\Lambda :=\sup_{\vk \in \cS } \lambda_1(\vk,\sigma) \leq \sqrt\frac{g}{L_0},
\end{equation}
we show that $\Lambda$ is the maximal growth rate of the linearized equations, see Proposition \ref{PropSharpGrowthRate}, to end the linear analysis section.

\subsection{Nonlinear instability}

Let us consider now the nonlinear equations \eqref{EqNSK_Pertur}. We begin with the \textit{a priori} energy estimate in Section \ref{SectEnergyEstimates} (see Proposition \ref{PropEstimates}). After that,   we prove the nonlinear instability  in Section \ref{SectNonlinearInstability}.

As $\sigma<\sigma_c$, we obtain possibly multiple normal mode solutions $(\theta_j, u_j, p_j)$ of \eqref{EqLinearized}-\eqref{BoundLinearized} for each $\vk \in \cS$ from Theorem \ref{ThmLinear}. Let 
\[
\cS_\Lambda := \{\vk \in \cS: \lambda_1(\vk,\sigma) > \frac23\Lambda\}.
\]
Hence, we define uniquely $1\leq \sK \leq \sN$  such that
\begin{equation}\label{AssumeLambda}
\Lambda > \lambda_1(\vk,\sigma) > \lambda_2(\vk,\sigma)> \dots > \lambda_\sK(\vk,\sigma) > \frac23 \Lambda > \lambda_{\sK+1}(\vk,\sigma) > \dots>\lambda_\sN(\vk,\sigma). 
\end{equation}
Let us fix $\sigma\in (0,\sigma_c)$ and $\vk \in \cS_\Lambda$, we consider a linear combination of normal modes 
\[
(\theta^\sN, u^\sN, q^\sN)(t,x)=  \sum_{j=1}^\sN \Csf_j (\theta_j, u_j, \pi_j)(t,x) \quad(\text{some } \Csf_j \text{ can be zero})
\]
to be an approximate solution to the nonlinear equations \eqref{EqNSK_Pertur}, with constants $\Csf_j$ being chosen such that
\begin{equation}\label{1stCondC}
\text{at least one of }\Csf_j (1\leq j\leq\sK) \text{ is nonzero}
\end{equation}
and 
\begin{equation}\label{2ndCondC}
\frac12 |\Csf_{j_m}|\| u_{j_m}\|_{L^2} > \sum_{j\geq j_m+1}|\Csf_j| \|u_j \|_{L^2}\geq 0, \quad\text{where } j_m :=\min \{1\leq j\leq \sK, \Csf_j \neq 0\}. 
\end{equation}
Hence, let $\delta \in (0,1)$ be given and let $(\theta^\delta, u^\delta, p^\delta)(t,x)$ be a local solution of the nonlinear equations \eqref{EqNSK_Pertur} with the initial datum 
\begin{equation}\label{InitialData}
\delta  (\theta^\sN, u^\sN, q^\sN)(0,x) = \delta \sum_{j=1}^\sN \Csf_j (\theta_j, u_j, \pi_j)(0,x).
\end{equation}
 We now define the difference functions
\[
(\theta^d, u^d, q^d)(t,x) = (\theta^\delta, u^\delta, q^\delta)(t,x) -\delta  (\theta^\sN, u^\sN, q^\sN)(t,x)
\]
satisfying the following nonlinear equations
\begin{equation}\label{EqDiff}
\begin{cases}
\partial_t \theta^d + \rho_0' u_3^d = - u^\delta \cdot\nabla\theta^\delta, \\
\rho_0 \partial_t u^d + \nabla p^d-\mu\Delta u^d + \sigma (\rho_0'\Delta \theta^d e_3+\rho_0'' \nabla\theta^d) +g\theta^d e_3 \\
\qquad\qquad = -\theta^\delta  \partial_t u^\delta -(\rho_0+\theta^\delta) u^\delta\cdot \nabla u^\delta - \sigma \nabla \theta^\delta  \Delta\theta^\delta ,\\
\text{div} u^d=0
\end{cases}
\end{equation}
with the initial data $(\theta^d, u^d)=0$. 
By exploiting some energy estimates of Eq. \eqref{EqDiff} and  the \textit{a priori} energy estimate established in Proposition \ref{PropEstimates}, we deduce the bound  of $ \|(\theta^d, u^d)(t)\|_{L^2}^2$, for $t$ small enough (see Proposition \ref{PropBoundDiff}).
The nonlinear result thus follows

\begin{theorem}\label{ThmUnstable}
Let $\rho_0$ satisfy \eqref{RhoPositive} and let $0<\sigma <\sigma_c$.  There exist positive constants $\delta_0, \varepsilon_0$ sufficiently small and another constant $m_0>0$ such that for any $\delta \in (0,\delta_0)$, the nonlinear equations \eqref{EqNSK_Pertur}  with the initial datum $\delta (\theta^\sN, u^\sN,q^\sN)(0,x)$ of form \eqref{InitialData}  admits  a local solution $(\theta^\delta, u^\delta)$ satisfying
\begin{equation}\label{BoundU_2Tdelta}
\| u^\delta(T^\delta)\|_{L^2} \geq \delta \| u^\sN(T^\delta) \|_{L^2} - \|(u^\delta-\delta u^\sN)(T^\delta)\|_{L^2} \geq   m_0\varepsilon_0,
\end{equation}
where $T^\delta$ satisfies uniquely $\delta \sum_{j=1}^\sN |\Csf_j| e^{\lambda_j T^\delta} =\varepsilon_0$.
\end{theorem}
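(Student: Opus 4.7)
The plan is to implement the Guo--Strauss escape-time scheme adapted to the multi-mode setting of Theorem \ref{ThmLinear}. Using the decomposition $(\theta^\delta, u^\delta) = \delta(\theta^\sN, u^\sN) + (\theta^d, u^d)$, the proof reduces to two estimates at the escape time $T^\delta$: a lower bound $\delta\|u^\sN(T^\delta)\|_{L^2}\geq c_1\varepsilon_0$ and an upper bound $\|u^d(T^\delta)\|_{L^2}\leq C\varepsilon_0^{3/2}$. The triangle inequality $\|u^\delta(T^\delta)\|_{L^2}\geq \delta\|u^\sN(T^\delta)\|_{L^2}-\|u^d(T^\delta)\|_{L^2}$ then yields \eqref{BoundU_2Tdelta} with $m_0:=c_1/2$ as soon as $\varepsilon_0$ is small enough that $C\varepsilon_0^{1/2}\leq c_1/2$. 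Before either bound, one checks that $T^\delta$ is well defined: the map $T\mapsto \delta\sum_{j=1}^\sN|\Csf_j|e^{\lambda_j T}$ is continuous and strictly increasing on $[0,\infty)$, equals $\delta\sum_j|\Csf_j|<\varepsilon_0$ at $T=0$ once $\delta<\delta_0:=\varepsilon_0/\sum_j|\Csf_j|$, and tends to $+\infty$. Moreover $T^\delta\sim\Lambda^{-1}\log(\varepsilon_0/\delta)\to+\infty$ as $\delta\to 0$, so local well-posedness of \eqref{EqNSK_Pertur} together with Proposition \ref{PropEstimates} and a continuation argument driven by the bootstrap below ensures $(\theta^\delta, u^\delta)$ exists on $[0,T^\delta]$ for every small $\delta$.

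The lower bound follows from the selection rules \eqref{AssumeLambda} and \eqref{2ndCondC}. Writing $u^\sN(t,x)=\sum_{j\geq j_m}\Csf_j e^{\lambda_j t}u_j(x)$ with $u_j$ the time-independent spatial profile, the reverse triangle inequality in $L^2(\Omega)$ gives
\[
\|u^\sN(t)\|_{L^2}\geq |\Csf_{j_m}|e^{\lambda_{j_m}t}\|u_{j_m}\|_{L^2}-\sum_{j>j_m}|\Csf_j|e^{\lambda_j t}\|u_j\|_{L^2}.
\]
By minimality of $j_m$ and the decreasing order of the $\lambda_j$'s, one has $\lambda_j\leq\lambda_{j_m}$ for every $j>j_m$, so $e^{\lambda_j t}\leq e^{\lambda_{j_m} t}$ for $t\geq 0$; combining with \eqref{2ndCondC} yields $\|u^\sN(t)\|_{L^2}\geq \tfrac12|\Csf_{j_m}|\|u_{j_m}\|_{L^2}e^{\lambda_{j_m} t}$. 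From the defining relation, $\varepsilon_0=\delta\sum_j|\Csf_j|e^{\lambda_j T^\delta}\leq \delta e^{\lambda_{j_m}T^\delta}\sum_j|\Csf_j|$, hence $\delta e^{\lambda_{j_m}T^\delta}\geq \varepsilon_0/\sum_j|\Csf_j|$, and therefore $\delta\|u^\sN(T^\delta)\|_{L^2}\geq c_1\varepsilon_0$ with $c_1:=\tfrac12|\Csf_{j_m}|\|u_{j_m}\|_{L^2}/\sum_j|\Csf_j|>0$.

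For the upper bound I would invoke Proposition \ref{PropBoundDiff}, whose proof consists in testing the difference system \eqref{EqDiff} against $u^d$ (and higher derivatives forced by the capillary term) and combining the resulting identity with the a priori estimate of Proposition \ref{PropEstimates}. Under the bootstrap assumption $\|(\theta^\delta, u^\delta)(t)\|_{H^s}\lesssim \delta e^{\Lambda t}$ on $[0,T^\delta]$, each nonlinearity on the right of \eqref{EqDiff}, namely $u^\delta\cdot\nabla\theta^\delta$, $(\rho_0+\theta^\delta)u^\delta\cdot\nabla u^\delta$, $\theta^\delta\partial_t u^\delta$ and $\sigma\nabla\theta^\delta\Delta\theta^\delta$, is controlled by $\delta^2 e^{2\Lambda t}$; a Gronwall argument calibrated to the maximal growth rate $\Lambda$ then delivers $\|(\theta^d, u^d)(t)\|_{L^2}^2\lesssim \delta^3 e^{3\Lambda t}$ on $[0,T^\delta]$. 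Evaluating at $t=T^\delta$ with $\delta e^{\Lambda T^\delta}\lesssim \varepsilon_0$ yields $\|u^d(T^\delta)\|_{L^2}\leq C\varepsilon_0^{3/2}$, which combined with the lower bound closes the proof.

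The main obstacle is the bootstrap closure for $(\theta^d, u^d)$ in the presence of the third-order capillary term $\sigma\nabla\theta^\delta\Delta\theta^\delta$: estimating it in $L^2$ costs three derivatives of $\theta^\delta$, which forces propagation of a sufficiently high Sobolev norm through Proposition \ref{PropEstimates} and a careful verification that the Gronwall constants remain below the working threshold uniformly in $\delta$. A secondary delicacy is the term $\theta^\delta\partial_t u^\delta$ on the right of \eqref{EqDiff}, which mixes the small factor $\theta^\delta$ with the time derivative of the unknown and must be absorbed into the parabolic--elliptic structure of the linearised part rather than treated as a pure perturbation.
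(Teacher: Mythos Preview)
Your lower bound argument is correct and in fact slightly simpler than the paper's (which expands $\|u^\sN(t)\|_{L^2}^2$ as a bilinear form and uses the strict ordering of the $\lambda_j$'s); both reach $\delta\|u^\sN(T^\delta)\|_{L^2}\gtrsim\varepsilon_0$.

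The upper bound, however, has a genuine gap: your bootstrap rate is too large. By \eqref{AssumeLambda} one has $\Lambda>\lambda_1\geq\lambda_{j_m}$ \emph{strictly}, so the assertion ``$\delta e^{\Lambda T^\delta}\lesssim\varepsilon_0$'' is false. From the defining relation $\delta F_\sN(T^\delta)=\varepsilon_0$ with $F_\sN(t)\asymp e^{\lambda_{j_m}t}$ one gets only $\delta e^{\lambda_{j_m}T^\delta}\sim\varepsilon_0$, whence $\delta e^{\Lambda T^\delta}\sim\varepsilon_0\,e^{(\Lambda-\lambda_{j_m})T^\delta}\to\infty$ as $\delta\to0$; the bound $\|u^d(T^\delta)\|_{L^2}^2\lesssim\delta^3 e^{3\Lambda T^\delta}$ therefore does not give $\varepsilon_0^3$. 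The paper avoids this by bootstrapping at the \emph{actual} linear rate $F_\sN(t)$ (Proposition \ref{PropNormUdelta}, proved via a Gronwall at the smallest rate $\lambda_\sN$), so that the forcing in the difference estimate is $\delta^3 F_\sN^3(t)$ rather than $\delta^3 e^{3\Lambda t}$. The Gronwall inherited from Lemma \ref{LemMaxLambda} still produces a prefactor $e^{2\Lambda t}$, and the integral $\int_0^t e^{(3\lambda_j-2\Lambda)s}\,ds$ changes behaviour at the threshold $\lambda_j=\tfrac23\Lambda$: it yields $e^{3\lambda_j t}$ for $j\leq\sK$ and $e^{2\Lambda t}$ for $j>\sK$. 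This is exactly why the structural hypotheses \eqref{AssumeLambda}--\eqref{1stCondC} are imposed: they force $j_m\leq\sK$, i.e.\ $\lambda_{j_m}>\tfrac23\Lambda$, so that $e^{2\Lambda t}<e^{3\lambda_{j_m}t}$ and every term is controlled by $(\delta F_\sN(T^\delta))^3=\varepsilon_0^3$. Your sketch never invokes the integer $\sK$ or condition \eqref{1stCondC}, so this mechanism is missing. A related omission is the explicit verification $T^\delta\leq\min\{T^\star,T^{\star\star}\}$ for the bootstrap times \eqref{DefTstar}, which the paper carries out before evaluating Proposition \ref{PropBoundDiff} at $T^\delta$.
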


%In the last section, we study the stability question as $\sigma>\sigma_c$. As we can see, no normal modes can be constructed in this case. Our last theorem is to give a partial result on the stability question. 
%\begin{theorem}\label{ThmStable}
%Let $\sigma >\sigma_c$ and $\rho_0$ satisfy \eqref{RhoPositive}. The followings hold.
%\begin{enumerate}
%\item (\textbf{Linear stability}) A solution $(\theta, u)$ of the linearized equations  \eqref{EqLinearized} satisfies the following stability estimates:
%\begin{equation}\label{LinearEstUL^2_3}
%\begin{split}
%\|u(t)\|_{L^2}^2 + \|\theta(t)\|_{H^1}^2 +\int_0^t \|\nabla u(s)\|_{L^2}^2 ds \lesssim \|u(0)\|_{L^2}^2 + \|\theta(0)\|_{H^1}^2.
%\end{split}
%\end{equation}\item (\textbf{Nonlinear stability})
%\end{enumerate}
%\end{theorem}

\section{Linear instability}\label{SectLinear}

The aim of this section is to prove the linear instability thanks to an operator method of Lafitte and Nguyễn \cite{LN20}. Let us prove Lemma \ref{LemEigenvalueReal} first. In the next steps,  we introduce some operators and study their spectrum to prove  Theorem \ref{ThmModeNormal} and Theorem \ref{ThmLinear}.

\begin{proof}[Proof of Lemma \ref{LemEigenvalueReal}]
Multiplying by $\overline\phi$ on both sides of \eqref{4thOrderEqPhi} and then integrating by parts, we  obtain that 
\[
\begin{split}
&\lambda^2\Big( \int_I ( k^2 \rho_0 |\phi|^2 + \rho_0 |\phi'|^2 ) - \rho_0\phi' \overline \phi\Big|_0^1 \Big)  + \lambda \mu  \int_I (|\phi''|^2+2k^2|\phi'|^2+k^4|\phi|^2 )\\
&\qquad +\lambda\mu ( \phi'''\overline\phi - \phi''\overline\phi' -2k^2  \phi'\overline\phi)\Big|_0^1\\
&=gk^2\int_I\rho_0'\phi^2 -\sigma k^2 \int_I (\rho_0')^2 (\phi')^2 - \sigma k^4\int_I (\rho_0')^2 \phi^2 +\sigma k^2 (\rho_0')^2\phi' \overline\phi\Big|_0^1.
\end{split}
\]
Using \eqref{BoundODE}, we get 
%\[
%\begin{split}
%&\lambda^2 \int_{\R_-} ( k^2 \rho_0 |\phi|^2 + \rho_0 |\phi'|^2 )  + \lambda \mu \int_{\R_-} (|\phi''|^2 + 2k^2 |\phi'|^2 + k^4 |\phi|^2 ) +gk^2\rho_+|\phi(0)|^2  =gk^2\int_{\R_-}\rho_0'|\phi|^2.
%\end{split}
%\] 
%Using the integration by parts and \eqref{RightBoundary} again, we have
\begin{equation}\label{EqVariational}
\begin{split}
&\lambda^2 \int_I ( k^2 \rho_0 \phi^2 + \rho_0 (\phi')^2 )   + \lambda \mu  \int_I ((\phi'')^2+2k^2(\phi')^2+k^4\phi^2 ) \\
& =gk^2\int_I\rho_0'\phi^2 -\sigma k^2 \int_I (\rho_0')^2 (\phi')^2 - \sigma k^4\int_I (\rho_0')^2 \phi^2.
\end{split}
\end{equation}
Suppose that $\lambda = \lambda_1 + i\lambda_2$, then one deduces from \eqref{EqVariational} that 
\begin{equation}\label{EqImaginaryPart}
-2\lambda_1 \lambda_2 \int_I ( k^2 \rho_0 |\phi|^2 + \rho_0 |\phi'|^2 )  = \lambda_2\mu\int_I ( |\phi''|^2  + 2k^2 |\phi'|^2 +k^4|\phi|^2 ) .
\end{equation}
If $\lambda_2 \neq 0$, \eqref{EqImaginaryPart} leads us to
\[
-2\lambda_1 \int_I ( k^2 \rho_0 |\phi|^2 + \rho_0 |\phi'|^2 )  =  \mu \int_I ( |\phi''|^2 + 2k^2 |\phi'|^2+k^4|\phi |^2 )  <0,
\]
that contradiction yields $\lambda_2=0$, i.e. $\lambda$ is real.
Using \eqref{EqVariational} again, we further get that 
\[
\lambda^2 \int_I \rho_0(k^2|\phi|^2+|\phi'|^2)  \leqslant gk^2 \int_I \rho_0'|\phi|^2 -\sigma k^2 \int_I (\rho_0')^2 |\phi'|^2 - \sigma k^4\int_I (\rho_0')^2 |\phi|^2.
\]
It tells us that $\lambda$ is  bounded by  $\sqrt{\frac{g}{L_0}}$. This finishes the proof of Lemma \ref{LemEigenvalueReal}.
\end{proof}
\subsection{Auxiliary operators}
\begin{proposition}\label{PropOpe_Q}
The operator 
\[
Q_{k,\sigma} \vartheta := gk^2\rho_0'\vartheta + \sigma k^2 ((\rho_0')^2\vartheta')'  -\sigma k^4 (\rho_0')^2 \vartheta
\]
 from $H_0^1(I)\cap H^2(I)$ to $L^2(I)$ is symmetric. 
\end{proposition}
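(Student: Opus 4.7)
The plan is to verify symmetry directly by integration by parts, using that functions in $H_0^1(I)$ vanish at the endpoints of $I=(0,1)$. For any $\vartheta_1,\vartheta_2 \in H_0^1(I)\cap H^2(I)$, I would compute
\[
\int_I Q_{k,\sigma}\vartheta_1 \cdot \vartheta_2 \, dx_3
= gk^2\int_I \rho_0'\vartheta_1\vartheta_2 \, dx_3 + \sigma k^2\int_I ((\rho_0')^2\vartheta_1')'\vartheta_2 \, dx_3 - \sigma k^4\int_I (\rho_0')^2 \vartheta_1\vartheta_2 \, dx_3.
\]
The first and third terms are visibly symmetric in $(\vartheta_1,\vartheta_2)$, so only the middle term requires work.

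For the middle term I would integrate by parts once:
\[
\int_I ((\rho_0')^2\vartheta_1')'\vartheta_2 \, dx_3 = \bigl[(\rho_0')^2\vartheta_1'\vartheta_2\bigr]_0^1 - \int_I (\rho_0')^2\vartheta_1'\vartheta_2' \, dx_3.
\]
The boundary term vanishes because $\vartheta_2(0)=\vartheta_2(1)=0$ (by the $H_0^1$ trace), and the remaining integrand $(\rho_0')^2\vartheta_1'\vartheta_2'$ is manifestly symmetric in the two arguments. Swapping the roles of $\vartheta_1$ and $\vartheta_2$ in the whole computation gives the same expression, yielding $\int_I Q_{k,\sigma}\vartheta_1 \cdot \vartheta_2 = \int_I \vartheta_1 \cdot Q_{k,\sigma}\vartheta_2$.

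There is essentially no obstacle here: the regularity $\vartheta_j \in H^2(I)$ makes every integral well-defined (note $\rho_0 \in C^\infty(I)$ by \eqref{RhoPositive}, so $(\rho_0')^2 \in C^\infty(\overline{I})$ and the traces of $\vartheta_j'$ at $0,1$ make sense), and the $H_0^1$ condition is precisely what kills the boundary term produced by integration by parts. The only point worth flagging is to justify that one integration by parts is legitimate for $H^2$ functions against $H^1$ test functions with smooth coefficients, which is standard. I would write this verification out in at most a few lines.
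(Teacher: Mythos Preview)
Your proof is correct and follows precisely the approach indicated in the paper, which states only that the result is ``due to direct computations via integration by parts'' and omits the details. Your single integration by parts on the middle term, with the boundary contribution killed by the $H_0^1$ condition, is exactly what is intended.
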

The proof of Proposition \ref{PropOpe_Q} is due to  direct computations via  integration by parts, that we omit the details. 
%\begin{proof}
% For any $\vartheta, \varrho \in H_0^1(I)\cap H^2(I)$, we obtain from the integration by parts that
%\[\begin{split}
%\langle Q_{k,\sigma} \vartheta, \varrho \rangle &=g \int_I \rho_0' \vartheta\varrho + \sigma k^2 \int_I  ((\rho_0')^2\vartheta')' \varrho -\sigma k^4\int_I  (\rho_0')^2\vartheta \varrho \\
%&= g \int_I \rho_0' \vartheta\varrho - \sigma k^2 \int_I  (\rho_0')^2\vartheta' \varrho' -\sigma k^4\int_I  (\rho_0')^2\vartheta \varrho
%\end{split}\]
%That implies $\langle Q_{k,\sigma} \vartheta, \varrho \rangle= \langle Q_{k,\sigma} \varrho, \vartheta \rangle$. Proposition \ref{PropOpe_Q} is proven.
%\end{proof}

\begin{proposition}\label{PropOpe_P}
Let us define the bilinear form on $H_0^2(I)$ as follows, 
\begin{equation}\label{BilinearForm}
\bB_{k,\lambda}(\vartheta,\varrho) := \lambda \int_I \rho_0(k^2\vartheta\varrho + \vartheta'\varrho') + \mu \int_I (\vartheta''\varrho'' +2k^2 \vartheta'\varrho'+ k^4\vartheta\varrho).  
\end{equation}
We have that $\bB_{k,\lambda}$ is a continuous and coercive bilinear form on $H_0^2(I)$. Hence, there exists a unique operator $P_{k,\lambda}$, that is also bijective,  such that for all $\varrho \in H_0^2(I)$, we have
\begin{equation}\label{EqVariationalForm}
\bB_{k,\lambda}(\vartheta,\varrho) = \langle P_{k,\lambda} \vartheta, \varrho \rangle. 
\end{equation}
Furthermore, for any given $f\in L^2$, there exists a unique function $u\in H_0^2(I)\cap H^4(I)$ such that 
\begin{equation}\label{FormulaP}
P_{k,\lambda} \vartheta = \lambda (k^2\rho_0 \vartheta - (\rho_0 \vartheta')' )+ \mu ( \vartheta^{(4)}-2k^2 \vartheta''+k^4 \vartheta) = f.
\end{equation} 
\end{proposition}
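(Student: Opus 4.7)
The plan is to apply the Lax--Milgram lemma on $H_0^2(I)$ to produce $P_{k,\lambda}$ as a topological isomorphism, and then to upgrade the resulting weak solution of $P_{k,\lambda}\vartheta = f$ to an element of $H_0^2(I)\cap H^4(I)$ by elliptic bootstrapping.

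First I would verify the two hypotheses of Lax--Milgram for $\bB_{k,\lambda}$ on $H_0^2(I)$. Continuity is immediate from Cauchy--Schwarz combined with the fact that $\rho_0\in C^\infty(\bar I)$ is bounded, yielding an estimate of the form $|\bB_{k,\lambda}(\vartheta,\varrho)| \lesssim \|\vartheta\|_{H^2}\|\varrho\|_{H^2}$. Coercivity exploits the sign assumptions $\lambda>0$, $\mu>0$ and $\min_I\rho_0>0$ recorded in \eqref{RhoPositive}, so that every term in $\bB_{k,\lambda}(\vartheta,\vartheta)$ is nonnegative; bounding from below by $\mu\int_I(\vartheta'')^2$ and applying the Poincaré inequality twice (available thanks to the full Dirichlet traces $\vartheta=\vartheta'=0$ at $x_3=0,1$ encoded in $H_0^2(I)$) produces $\bB_{k,\lambda}(\vartheta,\vartheta)\gtrsim \|\vartheta\|_{H^2}^2$.

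With these in hand, Lax--Milgram furnishes, for each $f\in L^2(I)$, a unique $\vartheta\in H_0^2(I)$ such that $\bB_{k,\lambda}(\vartheta,\varrho)=\int_I f\varrho$ for all $\varrho\in H_0^2(I)$; the linear map $f\mapsto\vartheta$ is a bounded isomorphism. This simultaneously defines $P_{k,\lambda}$ through \eqref{EqVariationalForm} (interpreting $\langle\cdot,\cdot\rangle$ as the $L^2$-pairing) and establishes its bijectivity onto $L^2(I)$.

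The remaining and, I expect, principal step is to promote $\vartheta$ from $H_0^2(I)$ to $H_0^2(I)\cap H^4(I)$ and to identify $P_{k,\lambda}\vartheta$ with the classical differential expression in \eqref{FormulaP}. Testing the variational identity against $\varrho\in C_c^\infty(I)$ and integrating by parts twice produces the distributional equation
\[
\mu\vartheta^{(4)} = f - \lambda k^2\rho_0\vartheta + \lambda(\rho_0\vartheta')' + 2\mu k^2\vartheta'' - \mu k^4\vartheta.
\]
Since $\vartheta\in H^2(I)$ and $\rho_0\in C^\infty(\bar I)$, every term on the right-hand side belongs to $L^2(I)$; hence $\vartheta^{(4)}\in L^2(I)$ and $\vartheta\in H^4(I)$. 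The boundary conditions $\vartheta(0)=\vartheta(1)=\vartheta'(0)=\vartheta'(1)=0$ are inherited from membership in $H_0^2(I)$, so the pointwise identity \eqref{FormulaP} holds almost everywhere, and uniqueness in $H_0^2(I)\cap H^4(I)$ follows from uniqueness in $H_0^2(I)$ already provided by Lax--Milgram.
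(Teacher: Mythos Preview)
Your proposal is correct and matches the paper's approach: the paper's proof consists of the single sentence ``The proof is straightforward thanks to Riesz's representation theorem, so we omit the details,'' and what you have written is precisely a careful fleshing-out of those omitted details (Lax--Milgram here being equivalent to Riesz since $\bB_{k,\lambda}$ is symmetric). Your inclusion of the $H^4$-bootstrap step, which the paper does not spell out, is appropriate and correctly argued.
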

\begin{proof}
The proof is straightforward thanks to Riesz's representation theorem, so we omit the details.
\end{proof}

Thanks to Propositions \ref{PropOpe_Q}, \ref{PropOpe_P}, we obtain the following proposition. 
\begin{proposition}\label{PropOpe_S}
The operator $S_{k,\lambda,\sigma}:=  P_{k,\lambda}^{-1/2}Q_{k,\sigma} P_{k,\lambda}^{-1/2}$ is compact and self-adjoint from $L^2(I)$ to itself.
\end{proposition}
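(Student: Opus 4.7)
The plan is to promote $P_{k,\lambda}$ to a self-adjoint positive operator on $L^2(I)$, define its square root via functional calculus, and then read off both properties of $S_{k,\lambda,\sigma}$ from (i) the symmetry of $Q_{k,\sigma}$ and $P_{k,\lambda}^{-1/2}$, and (ii) the regularising effect of $P_{k,\lambda}^{-1/2}$ combined with Rellich's embedding.

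First, I would observe that the bilinear form $\bB_{k,\lambda}$ in \eqref{BilinearForm} is symmetric, continuous and coercive on $H_0^2(I)$. Thus the associated operator $P_{k,\lambda}$ furnished by Proposition \ref{PropOpe_P} is self-adjoint and strictly positive on $L^2(I)$, with operator domain $H^4(I)\cap H_0^2(I)$ and form domain $H_0^2(I)$. Standard functional calculus then produces a self-adjoint positive square root $P_{k,\lambda}^{1/2}$ with form domain $H_0^2(I)$ satisfying
\[
\|P_{k,\lambda}^{1/2}\vartheta\|_{L^2}^2=\bB_{k,\lambda}(\vartheta,\vartheta),\qquad \vartheta\in H_0^2(I),
\]
and a bounded self-adjoint inverse $P_{k,\lambda}^{-1/2}:L^2(I)\to L^2(I)$.

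Second, from the explicit expression of $\bB_{k,\lambda}$ together with $\min_I\rho_0>0$ and $\mu>0$, one has a coercivity bound $\bB_{k,\lambda}(\vartheta,\vartheta)\geq C_0\|\vartheta\|_{H^2(I)}^{2}$ for $\vartheta\in H_0^2(I)$, which when applied to $\vartheta=P_{k,\lambda}^{-1/2}f$ yields $\|P_{k,\lambda}^{-1/2}f\|_{H^2(I)}\leq C_0^{-1/2}\|f\|_{L^2}$. Hence $P_{k,\lambda}^{-1/2}$ maps $L^2(I)$ boundedly into $H_0^2(I)\subset H^2(I)\cap H_0^1(I)$. Since $Q_{k,\sigma}$ is a second-order operator with smooth coefficients involving $\rho_0',\rho_0''$, it maps $H^2(I)\cap H_0^1(I)$ boundedly into $L^2(I)$. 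Composing these, $S_{k,\lambda,\sigma}=P_{k,\lambda}^{-1/2}Q_{k,\sigma}P_{k,\lambda}^{-1/2}$ is a well-defined bounded operator on $L^2(I)$.

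For self-adjointness, I would use Proposition \ref{PropOpe_Q} and the self-adjointness of $P_{k,\lambda}^{-1/2}$: for any $f,g\in L^2(I)$, setting $\vartheta=P_{k,\lambda}^{-1/2}f$ and $\varrho=P_{k,\lambda}^{-1/2}g$ (both in $H_0^2(I)\subset H^2(I)\cap H_0^1(I)$), one computes
\[
\langle S_{k,\lambda,\sigma}f,g\rangle_{L^2}=\langle Q_{k,\sigma}\vartheta,\varrho\rangle_{L^2}=\langle\vartheta,Q_{k,\sigma}\varrho\rangle_{L^2}=\langle f,S_{k,\lambda,\sigma}g\rangle_{L^2}.
\]

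For compactness, the boundedness argument above actually shows that $S_{k,\lambda,\sigma}:L^2(I)\to H_0^2(I)$ is bounded. Since $I$ is a bounded interval, Rellich's theorem gives the compact embedding $H_0^2(I)\hookrightarrow L^2(I)$, and composition of a bounded and a compact operator is compact. Hence $S_{k,\lambda,\sigma}$ is compact on $L^2(I)$.

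The one point that requires some care, and which I expect to be the main conceptual (rather than technical) obstacle, is the identification of the form domain of $P_{k,\lambda}^{1/2}$ with $H_0^2(I)$: this is what legitimises writing $\|P_{k,\lambda}^{-1/2}f\|_{H^2}$ and reducing everything to the coercivity of $\bB_{k,\lambda}$. Everything else is a routine application of Rellich plus the symmetry already recorded in Propositions \ref{PropOpe_Q} and \ref{PropOpe_P}.
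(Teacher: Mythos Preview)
Your proof is correct and follows essentially the same route as the paper: show that $P_{k,\lambda}^{-1/2}$ maps $L^2(I)$ into $H_0^2(I)$, observe that $Q_{k,\sigma}$ sends $H_0^2(I)$ back to $L^2(I)$, conclude that $S_{k,\lambda,\sigma}$ ranges in $H_0^2(I)$, and then invoke the compact embedding $H_0^2(I)\hookrightarrow L^2(I)$. If anything, you are more careful than the paper: you spell out the functional-calculus construction of $P_{k,\lambda}^{-1/2}$, you isolate the form-domain identification as the one nontrivial step, and you give an explicit computation of self-adjointness via the symmetry of $Q_{k,\sigma}$, whereas the paper's proof simply asserts self-adjointness alongside compactness after the embedding step.
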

\begin{proof}
Proposition \ref{PropOpe_P} helps us to define the inverse operator $P_{k,\lambda}^{-1}$ of $P_{k,\lambda}$, from $L^2(I)$ to $H_0^2(I)\cap H^4(I)$. Hence, let $\psi \in L^2(I)$, one has $P_{k,\lambda}^{-1/2}\psi$ belongs to  $H_0^2(I)$, yielding that $Q_{k,\sigma} P_{k,\lambda}^{-1/2}\psi \in L^2(I)$. We deduce that  $S_{k,\lambda,\sigma}$ sends $L^2(I)$ to $ H_0^2(I)$. Composing $S_{k,\lambda,\sigma}$ with the continuous injection $H^p(I)\hookrightarrow H^q(I)$ for $p>q\geq 0$, we obtain the  compactness and self-adjointness of $S_{k,\lambda,\sigma}$. The proof of Proposition \ref{PropOpe_S} is complete.
\end{proof}
Thanks to  the spectral theory of compact and self-adjoint operators again, we have that the discrete spectrum of the operator $S_{k,\lambda,\sigma}$ is an infinite sequence of eigenvalues, denoted by $\{\gamma_n= \gamma_n(k,\lambda,\sigma)\}_{n\geq 1}$, tending to 0 as $n\to\infty$.  We further obtain the following property of the largest eigenvalue $\gamma_1$. 
\begin{proposition}\label{PropGamma1}
Let us recall the bilinear form $\bB_{k,\lambda}$ \eqref{BilinearForm} and the critical capillary number \eqref{CapCritical}. For $0<\sigma<\sigma_c(k)$, there holds 
\begin{equation}\label{VariationalGamma1}
\frac{\gamma_1(k,\lambda,\sigma)}{k^2} = \max_{\vartheta \in  H_0^2(I)} \frac{g\int_I \rho_0'\vartheta^2 - \sigma \int_I (\rho_0')^2(k^2\vartheta^2 +(\vartheta')^2)}{\bB_{k,\lambda}(\vartheta,\vartheta)}>0.
\end{equation}
%Furthermore, we have
%\begin{equation}\label{BoundGamma1}
%\gamma_1(\lambda,\sigma) < \frac{\sigma_c-\sigma}\mu \sup_{w \in \tilde H^1}\frac{\int(|\nabla(\rho_0'w_3)|^2 + \rho_0'\rho_0'''|w_3|^2)}{\|\nabla \psi\|_{L^2}^2}<+\infty. 
%\end{equation}
\end{proposition}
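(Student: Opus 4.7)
The plan is to combine the Courant--Fischer variational characterization of the largest eigenvalue of the compact self-adjoint operator $S_{k,\lambda,\sigma}$ with the substitution $\psi = P_{k,\lambda}^{1/2}\vartheta$, transporting a maximization over $L^2(I)$ into one over $H_0^2(I)$ against the bilinear form $\bB_{k,\lambda}$.

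First, since $S_{k,\lambda,\sigma}$ is compact and self-adjoint on $L^2(I)$ by Proposition \ref{PropOpe_S}, its largest eigenvalue is given by the Rayleigh quotient
\[
\gamma_1(k,\lambda,\sigma) = \max_{\psi\in L^2(I)\setminus\{0\}} \frac{\langle S_{k,\lambda,\sigma}\psi,\psi\rangle_{L^2(I)}}{\|\psi\|_{L^2(I)}^2}.
\]
By Proposition \ref{PropOpe_P}, the operator $P_{k,\lambda}$ associated to the coercive form $\bB_{k,\lambda}$ is a positive self-adjoint operator on $L^2(I)$ with form domain $H_0^2(I)$; the functional calculus therefore produces a square root $P_{k,\lambda}^{1/2}$ which is a bijection from $H_0^2(I)$ onto $L^2(I)$, with $\bB_{k,\lambda}(\vartheta,\vartheta) = \|P_{k,\lambda}^{1/2}\vartheta\|_{L^2(I)}^2$ by \eqref{EqVariationalForm}. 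Setting $\vartheta := P_{k,\lambda}^{-1/2}\psi$ in the Rayleigh quotient then yields
\[
\langle S_{k,\lambda,\sigma}\psi,\psi\rangle_{L^2(I)} = \langle Q_{k,\sigma}\vartheta,\vartheta\rangle_{L^2(I)}, \qquad \|\psi\|_{L^2(I)}^2 = \bB_{k,\lambda}(\vartheta,\vartheta),
\]
and as $\psi$ runs over $L^2(I)\setminus\{0\}$, $\vartheta$ runs over $H_0^2(I)\setminus\{0\}$.

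Second, a direct integration by parts using $\vartheta(0)=\vartheta(1)=0$ gives
\[
\langle Q_{k,\sigma}\vartheta,\vartheta\rangle_{L^2(I)} = k^2\left[g\int_I \rho_0'\vartheta^2 - \sigma \int_I (\rho_0')^2\bigl(k^2\vartheta^2 + (\vartheta')^2\bigr)\right],
\]
so dividing through by $k^2$ furnishes exactly the identity \eqref{VariationalGamma1}.

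Third, to prove $\gamma_1 > 0$ under $0 < \sigma < \sigma_c(k)$, I would exhibit a single admissible test function with positive numerator. By definition \eqref{kCapCritical} there is $\vartheta_* \in H_0^1(I)$ with $g\int_I \rho_0'\vartheta_*^2 > \sigma \int_I (\rho_0')^2(k^2\vartheta_*^2+(\vartheta_*')^2)$; approximating $\vartheta_*$ in the $H^1$ norm by a sequence in $C_c^\infty(I) \subset H_0^2(I)$ preserves this strict inequality for some element of the sequence, which then furnishes a $\vartheta \in H_0^2(I)$ with strictly positive numerator and positive denominator in \eqref{VariationalGamma1}. The main technical point to be careful about is the precise identification of the form domain of $P_{k,\lambda}^{1/2}$ with $H_0^2(I)$ and the bijectivity of $P_{k,\lambda}^{1/2}\colon H_0^2(I) \to L^2(I)$; once this is established through the coercivity of $\bB_{k,\lambda}$ and standard functional calculus, the remainder is routine integration by parts and a density argument.
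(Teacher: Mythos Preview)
Your proof is correct and shares its core ingredient with the paper's argument, namely the change of variables $\vartheta = P_{k,\lambda}^{-1/2}\psi$ linking the Rayleigh quotient of $S_{k,\lambda,\sigma}$ on $L^2(I)$ to the quotient $\langle Q_{k,\sigma}\vartheta,\vartheta\rangle/\bB_{k,\lambda}(\vartheta,\vartheta)$ on $H_0^2(I)$. The difference is in execution: the paper establishes the two inequalities separately, proving $\max \leq \gamma_1/k^2$ by a Lagrange-multiplier argument (any extremizer of the constrained quotient satisfies $Q_{k,\sigma}\vartheta_\star = \alpha k^2 P_{k,\lambda}\vartheta_\star$, so $\alpha k^2$ is an eigenvalue of $S_{k,\lambda,\sigma}$) and $\gamma_1/k^2 \leq \max$ by the change of variables. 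You instead observe that $P_{k,\lambda}^{1/2}\colon H_0^2(I)\to L^2(I)$ is a bijection, so the change of variables transports one supremum onto the other in a single stroke, bypassing the Lagrange-multiplier half entirely. This is cleaner, though it places more weight on the functional-analytic identification $\mathrm{dom}(P_{k,\lambda}^{1/2})=H_0^2(I)$, which you rightly flag as the one point requiring care. You are also more explicit than the paper about the density step needed for positivity: the paper invokes a $\vartheta\in H_0^1(I)$ with positive numerator and silently passes to $H_0^2(I)$, whereas you spell out the $C_c^\infty$ approximation in $H^1$.
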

\begin{proof}
Since  the definition of $\sigma_c(k)$ \eqref{kCapCritical}, for $\sigma \in (0,\sigma_c(k))$, there exists a function $\vartheta \in H_0^1(I)$ such that 
\[
g\int_I \rho_0'\vartheta^2 - \sigma \int_I (\rho_0')^2(k^2\vartheta^2 +(\vartheta')^2)>0,
\]
it yields the positivity of 
\[
\max_{\vartheta \in  H_0^2(I)} \frac{g\int_I \rho_0'\vartheta^2 - \sigma \int_I (\rho_0')^2(k^2\vartheta^2 +(\vartheta')^2)}{\bB_{k,\lambda}(\vartheta,\vartheta)}.
\]
We now prove
\begin{equation}\label{IneGamma1Gtr}
\max_{\vartheta \in  H_0^2(I)} \frac{g\int_I \rho_0'\vartheta^2 - \sigma \int_I (\rho_0')^2(k^2\vartheta^2 +(\vartheta')^2)}{\bB_{k,\lambda}(\vartheta,\vartheta)} \leq  \frac{\gamma_1}{k^2}. 
\end{equation}
Let us consider the Lagrangian functional 
\[
\mathcal L_{k,\lambda,\sigma}(\alpha, \vartheta) = g\int_I \rho_0'\vartheta^2 - \sigma \int_I (\rho_0')^2(k^2\vartheta^2 +(\vartheta')^2) - \alpha(\bB_{k,\lambda}(\vartheta,\vartheta) -1).
\]
Thanks to Lagrange multiplier theorem, the extrema  of the quotient 
\[
\frac{g\int_I \rho_0'\vartheta^2 - \sigma \int_I (\rho_0')^2(k^2\vartheta^2 +(\vartheta')^2)}{\bB_{k,\lambda}(\vartheta,\vartheta)}
 \]
are necessarily the stationary points $(\alpha_\star, \vartheta_\star)$ of  $\mathcal L_{k,\lambda,\sigma}$, which satisfy that for all $\varrho \in H_0^2(I)$, 
\begin{equation}\label{EqWeakFormVariational}
\begin{split}
g \int_I \rho_0'  \vartheta_\star \varrho- \sigma \int_I  (\rho_0')^2 \vartheta_\star' \varrho'  -\sigma k^2\int_I  (\rho_0')^2 \vartheta \varrho - \alpha \bB_{k,\lambda}(\vartheta_\star,\varrho)=0
\end{split}
\end{equation}
and that
\begin{equation}\label{EqNormalizedU}
\bB_{k,\lambda}(\vartheta_\star,\vartheta_\star)=1.
\end{equation}
Owing to a bootstrap argument, we obtain from \eqref{EqWeakFormVariational} that $\vartheta_\star \in H_0^2(I)\cap H^4(I)$ is a solution of  $Q_{k,\sigma}\vartheta_\star= \alpha k^2 P_{k,\lambda} \vartheta_\star$ being normalized by \eqref{EqNormalizedU}. Hence, $\alpha_\star k^2$ is an eigenvalue of the compact and self-adjoint operator $S_{k,\lambda,\sigma} =P_{k,\lambda}^{-1/2} Q_{k,\sigma} P_{k,\lambda}^{-1/2}$ with  $P_{k,\lambda}^{-1/2}\vartheta_\star$ being an associated eigenfunction. We deduce that $\alpha k^2\leq \gamma_1(k,\lambda,\sigma)$, i.e. \eqref{IneGamma1Gtr}.

Next, we  prove  the reverse inequality 
\begin{equation}\label{IneGamma1Less}
\frac{\gamma_1}{k^2} \leq \max_{\vartheta \in  H_0^2(I)} \frac{g\int_I \rho_0'\vartheta^2 - \sigma \int_I (\rho_0')^2(k^2\vartheta^2 +(\vartheta')^2)}{\bB_{k,\lambda}(\vartheta,\vartheta)}
\end{equation}
For any $\psi \in L^2(I)$, there exists a unique $\vartheta \in H_0^2(I)$ such that $\vartheta =P_{k,\lambda}^{-1/2}\psi$. Hence
\[
\begin{split}
\frac{\langle S_{k,\lambda,\sigma} \psi,\psi \rangle}{\|\vartheta \|_{L^2(I)}^2} &= \frac{\langle Q_{k,\sigma} P_{k,\lambda}^{-1/2}\psi, P_{k,\lambda}^{-1/2}\psi\rangle}{\langle P_{k,\lambda} \psi, P_{k,\lambda}^{-1}\psi \rangle} =  \frac{\langle Q_{k,\sigma} P_{k,\lambda}^{-1/2}\psi, P_{k,\lambda}^{-1/2}\psi \rangle}{\langle P_{k,\lambda}(P_{k,\lambda}^{-1/2}\psi ),P_{k,\lambda}^{-1/2}\psi \rangle }= \frac{\langle Q_{k,\sigma} \vartheta,\vartheta \rangle}{\langle P_{k,\lambda} \vartheta,\vartheta \rangle},
\end{split}
\]
yielding 
\begin{equation}\label{EqSthetaQw}
\frac1{k^2} \frac{\langle S_{k,\lambda,\sigma} \psi,\psi \rangle}{\|\psi \|_{L^2(I)}^2} =\frac{g\int_I \rho_0'\vartheta^2 - \sigma \int_I (\rho_0')^2(k^2\vartheta^2 +(\vartheta')^2)}{\bB_{k,\lambda}(\vartheta,\vartheta)}.
\end{equation}
Meanwhile, since $S_{k,\lambda,\sigma}$ is a self-adjoint   operator, one has 
\begin{equation}\label{EqDefGamma_1}
\gamma_1= \sup_{\psi \in L^2(I)} \frac{\langle S_{k,\lambda,\sigma}\psi,\psi\rangle}{\|\psi\|_{L^2(I)}^2}.
\end{equation}
Combining \eqref{EqSthetaQw} and  \eqref{EqDefGamma_1}, it gives \eqref{IneGamma1Less}. In view of  \eqref{IneGamma1Gtr} and \eqref{IneGamma1Less}, we obtain \eqref{VariationalGamma1}.
% and that variational problem is attained by the function $w = P_{k,\lambda}^{1/2}e_{1,\lambda}$.
%It can be seen that  the boundedness of $\gamma_1(\lambda,\sigma)$ follows from the definition of the critical capillary number.   The proof of Proposition \ref{PropGamma1} is complete. 
\end{proof}

\begin{proposition}\label{PropPositive}
There exist finitely positive eigenvalues $\gamma_n$.
\end{proposition}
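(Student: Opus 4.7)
The plan is to reduce the statement to showing that an auxiliary Sturm--Liouville quadratic form on $H_0^1(I)$ has finite Morse index, and then to transfer this count back to the spectrum of $S_{k,\lambda,\sigma}$ via a Courant--Fischer min--max formula extending Proposition \ref{PropGamma1} to every $\gamma_n$.

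First I would upgrade the variational characterization of $\gamma_1$ to the $n$-th eigenvalue. Since $S_{k,\lambda,\sigma}$ is compact and self-adjoint on $L^2(I)$ and $P_{k,\lambda}^{-1/2}$ is a bijection from $L^2(I)$ onto $H_0^2(I)$, the change of variables $\vartheta = P_{k,\lambda}^{-1/2}\psi$ combined with \eqref{EqSthetaQw} yields
\[
\frac{\gamma_n(k,\lambda,\sigma)}{k^2} = \max_{\substack{V\subset H_0^2(I)\\ \dim V = n}}\,\min_{\substack{\vartheta\in V\\ \vartheta\neq 0}} \frac{N_{k,\sigma}(\vartheta)}{\bB_{k,\lambda}(\vartheta,\vartheta)},
\]
where $N_{k,\sigma}(\vartheta) := g\int_I \rho_0'\vartheta^2 - \sigma\int_I (\rho_0')^2(k^2\vartheta^2 + (\vartheta')^2)$. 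Since $\bB_{k,\lambda}(\vartheta,\vartheta) > 0$ for every nonzero $\vartheta$, the eigenvalue $\gamma_n$ is positive if and only if there exists an $n$-dimensional subspace of $H_0^2(I)$ on which $N_{k,\sigma}$ is positive definite.

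Next I would represent $N_{k,\sigma}$ as the quadratic form of a self-adjoint operator. Integration by parts gives, for every $\vartheta \in H^2(I)\cap H_0^1(I)$,
\[
N_{k,\sigma}(\vartheta) = \langle T_{k,\sigma}\vartheta, \vartheta\rangle_{L^2(I)}, \qquad T_{k,\sigma}\vartheta := \sigma\bigl((\rho_0')^2\vartheta'\bigr)' - \sigma k^2(\rho_0')^2 \vartheta + g \rho_0' \vartheta.
\]
By \eqref{RhoPositive}, the operator $-T_{k,\sigma}$ has smooth coefficients and uniformly positive principal part $-\sigma(\rho_0')^2 \partial_{x_3}^2$, so it is a uniformly elliptic Sturm--Liouville operator with Dirichlet boundary conditions. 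Classical theory then gives that $T_{k,\sigma}$ is self-adjoint on $L^2(I)$ with compact resolvent, and its eigenvalues $\mu_1 \geq \mu_2 \geq \cdots$ satisfy $\mu_n \to -\infty$; hence only finitely many of them are positive, say $\mu_1,\ldots,\mu_M > 0$.

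By the classical Courant--Fischer min--max formula applied to $T_{k,\sigma}$, the maximal dimension of a subspace of $H_0^1(I)$ on which $N_{k,\sigma}$ is positive definite equals $M$. Since $H_0^2(I)\subset H_0^1(I)$, the same bound holds a fortiori for subspaces of $H_0^2(I)$, so combined with the opening step we conclude that $\gamma_n \leq 0$ for every $n > M$, proving the finiteness claim. The main obstacle is the upgraded min--max identity in the first step, which requires carefully tracking the bijection $P_{k,\lambda}^{-1/2}:L^2(I)\to H_0^2(I)$ and promoting the scalar identity of Proposition \ref{PropGamma1} through to every $n$; once that is in place, the remaining argument is standard elliptic spectral theory.
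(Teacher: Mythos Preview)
Your proposal is correct and follows essentially the same route as the paper: both arguments reduce to the fact that the Dirichlet Sturm--Liouville operator $T_{k,\sigma}$ (which is $k^{-2}Q_{k,\sigma}$) has finitely many positive eigenvalues, and then transfer this finiteness to $S_{k,\lambda,\sigma}=P_{k,\lambda}^{-1/2}Q_{k,\sigma}P_{k,\lambda}^{-1/2}$. The paper's proof is terse---it just asserts ``So does $S_{k,\lambda,\sigma}$''---whereas your Courant--Fischer/Morse-index argument makes that transfer explicit and rigorous; in this sense your write-up is more complete, but the underlying idea is the same.
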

\begin{proof}
The operator $Q_{k,\sigma}$ can be seen as Dirichlet realization of a weighted Laplacian. Due to Poincar\'e's inequality, there exists a positive constant $C_0$ such that for all $\psi \in H_0^1(I)$, 
\[
\int_I \psi Q_{k,\sigma}\psi \leq C_0\int_I \psi^2. 
\]
That means  $Q_{k,\lambda}$ has finitely positive eigenvalues as $\sigma<\sigma_c(k)$. So does $S_{k,\lambda,\sigma}$. 
\end{proof}

Thanks to Propositions \ref{PropGamma1}, \ref{PropPositive}, we reorder the sequence $(\gamma_n(k,\lambda,\sigma))_{n\geq 1}$ as follows,
\begin{equation}\label{PositiveEigen}
\gamma_1(k,\lambda,\sigma)> \gamma_2(k,\lambda,\sigma)  > \dots > \gamma_\sN(k,\lambda,\sigma)  >0 > \gamma_{\sN+1}(k,\lambda,\sigma) > \dots,
\end{equation}
with 
\[
\lim_{j \to \infty} \gamma_{\sN+j}(k,\lambda,\sigma)=0.
\]

\subsection{Proof of  the linear instability}

Let $\psi_j=\psi_{j,k,\lambda,\sigma}\in L^2(I)$ be an eigenfunction of $S_{k,\lambda,\sigma}$ associated with the eigenvalue $\gamma_j$ $(1\leq j\leq \sN)$ listed above \eqref{PositiveEigen},  one has 
\[
S_{k,\lambda,\sigma} \psi_j= P_{k,\lambda}^{-1/2}Q_{k,\sigma} P_{k,\lambda}^{-1/2} \psi_j= \gamma_j^+\psi_j.
\]
This yields, $\phi_j=\phi_{j,k,\lambda,\sigma}= P_{k,\lambda}^{-1/2} \psi_j \in H_0^2(I)$ is a solution of 
\begin{equation}\label{EqQ_Gamma_Lambda}
Q_{k,\sigma}\phi_j = \gamma_j P_{k,\lambda} \phi_j.
\end{equation}
 %For negative eigenvalue $\gamma_k(\lambda,\sigma)$ of $S_{k,\lambda,\sigma}$, the equation $\gamma_k(\lambda,\sigma)= \lambda$ always has no solution.  So that, 
 For each $1\leq j\leq \sN$, in order to get $\phi_{j,k,\lambda,\sigma}$ is a solution of \eqref{4thOrderEqPhi}, it suffices to look for positive values of $\lambda_j$ such that 
\begin{equation}\label{EqFindLambda}
\gamma_j(k,\lambda_j,\sigma) =\lambda_j. 
\end{equation}
We state two lemmas to solve Eq. \eqref{EqFindLambda}.
\begin{lemma}\label{LemGammaCont}
We have that $\gamma_j(k,\lambda,\sigma)$ and $\psi_{j,k,\lambda,\sigma}$ are differentiable functions in $\lambda$.
\end{lemma}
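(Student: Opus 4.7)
The plan is to apply the implicit function theorem to the coupled system consisting of the eigenvalue equation $S_{k,\lambda,\sigma}\psi = \gamma\psi$ together with the normalization $\|\psi\|_{L^2(I)} = 1$, treating $\lambda$ as the parameter and $(\gamma,\psi)\in\R\times L^2(I)$ as the unknowns. The crucial preliminary input is that $S_{k,\lambda,\sigma}$ depends smoothly (even real-analytically) on $\lambda$ in the operator-norm topology on $L^2(I)$.

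To establish the smooth dependence of $S_{k,\lambda,\sigma}$, I would first observe from \eqref{FormulaP} that $P_{k,\lambda}$ is \emph{affine} in $\lambda$, namely $P_{k,\lambda} = \lambda\mathcal{A} + \mathcal{B}$, where $\mathcal{A}\vartheta := k^2\rho_0\vartheta - (\rho_0\vartheta')'$ and $\mathcal{B}\vartheta := \mu(\vartheta^{(4)} - 2k^2\vartheta'' + k^4\vartheta)$ are both symmetric positive on their natural domains. Consequently the bounded inverse $P_{k,\lambda}^{-1}: L^2(I)\to H_0^2(I)\cap H^4(I)$ depends analytically on $\lambda$. Using the integral representation $P^{-1/2} = \pi^{-1}\int_0^\infty (s+P)^{-1}s^{-1/2}\,ds$ for positive self-adjoint $P$, or equivalently Kato's analytic functional calculus, one deduces that $\lambda\mapsto P_{k,\lambda}^{-1/2}$ is analytic in the operator norm. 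Hence $S_{k,\lambda,\sigma} = P_{k,\lambda}^{-1/2}Q_{k,\sigma}P_{k,\lambda}^{-1/2}$ is a real-analytic family of compact self-adjoint operators on $L^2(I)$.

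Fix now $\lambda_0>0$ and write $\gamma_j^0 := \gamma_j(k,\lambda_0,\sigma)$, $\psi_j^0 := \psi_{j,k,\lambda_0,\sigma}$. Consider the map
\[
F: \R\times L^2(I)\times\R \to L^2(I)\times\R, \qquad F(\gamma,\psi,\lambda) := \bigl(S_{k,\lambda,\sigma}\psi - \gamma\psi,\; \|\psi\|_{L^2(I)}^2 - 1\bigr).
\]
Then $F(\gamma_j^0,\psi_j^0,\lambda_0)=0$, and the partial differential $D_{(\gamma,\psi)}F$ at this point sends $(\gamma',\psi')$ to $\bigl((S_{k,\lambda_0,\sigma}-\gamma_j^0)\psi' - \gamma'\psi_j^0,\; 2\langle\psi_j^0,\psi'\rangle\bigr)$. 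Because the strict ordering in \eqref{PositiveEigen} guarantees that $\gamma_j^0$ is a \emph{simple} eigenvalue of $S_{k,\lambda_0,\sigma}$ for $1\leq j\leq\sN$, the Fredholm alternative together with self-adjointness shows that this differential is an isomorphism: for any $(g,c)\in L^2(I)\times\R$, solvability of $(S_{k,\lambda_0,\sigma}-\gamma_j^0)\psi' = g + \gamma'\psi_j^0$ forces $\gamma' = -\langle g,\psi_j^0\rangle$, and then $\psi'$ is determined modulo $\mathrm{span}(\psi_j^0)$, with the constraint $\langle\psi_j^0,\psi'\rangle = c/2$ pinning down a unique representative.

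The implicit function theorem then produces a $C^1$ (in fact analytic) curve $\lambda\mapsto(\gamma_j(k,\lambda,\sigma),\psi_{j,k,\lambda,\sigma})$ through $(\gamma_j^0,\psi_j^0)$ solving $F=0$, which by local uniqueness must coincide with the branch identified earlier, proving the claim. The main technical obstacle is the smoothness of the operator square-root $\lambda\mapsto P_{k,\lambda}^{-1/2}$, which is standard but requires invoking functional calculus for positive self-adjoint operators depending analytically on a real parameter; everything downstream is routine once this is in hand.
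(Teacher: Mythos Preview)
Your proposal is correct and is essentially an explicit instantiation of what the paper invokes: the paper does not prove this lemma at all but simply appeals to Kato's analytic perturbation theory for self-adjoint operators (and to \cite[Lemma 3.2]{LN20}), whereas you have written out the concrete mechanism---analyticity of $\lambda\mapsto P_{k,\lambda}^{-1/2}$ via functional calculus, followed by the implicit function theorem applied to the eigenpair equation with normalization, using the simplicity encoded in \eqref{PositiveEigen}. One small remark: your IFT argument leans on simplicity of $\gamma_j^0$, which the paper asserts in \eqref{PositiveEigen} but does not justify; the full Kato--Rellich theory for real-analytic one-parameter families of compact self-adjoint operators would yield analytic eigenbranches even without assuming simplicity a priori, so if you want to be fully self-contained you might either (i) argue separately that the positive eigenvalues are simple, or (ii) note that the analytic-family version of Kato's theorem covers possible crossings and then the strict ordering in \eqref{PositiveEigen} guarantees the ordered $j$-th branch coincides with an analytic one.
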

The proof of Lemma \ref{LemGammaCont} is followed by the classical perturbation theory of the spectrum of operators of Kato \cite{Kato} and is the same as \cite[Lemma 3.2]{LN20}. Hence, we omit the details here. 

\begin{lemma}\label{LemGammaDecrease}
The function $\gamma_j(k,\lambda,\sigma)$ is  decreasing in $\lambda$. 
\end{lemma}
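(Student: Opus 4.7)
The plan is to differentiate the generalized eigenvalue relation in $\lambda$ and exploit the symmetry of $Q_{k,\sigma}$ and $P_{k,\lambda}$ (Propositions \ref{PropOpe_Q}, \ref{PropOpe_P}) to pin down the sign of $\partial_\lambda \gamma_j$. By Lemma \ref{LemGammaCont} the eigenpair $(\gamma_j,\psi_j)$ depends differentiably on $\lambda$; since $P_{k,\lambda}^{-1/2}$ itself depends smoothly on $\lambda$, so does the associated function $\phi_j = P_{k,\lambda}^{-1/2}\psi_j \in H_0^2(I)\cap H^4(I)$, which satisfies the generalized eigenvalue equation
\[
Q_{k,\sigma}\phi_j = \gamma_j P_{k,\lambda} \phi_j.
\]

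Differentiating this identity in $\lambda$ and using that $Q_{k,\sigma}$ is $\lambda$-independent while, from \eqref{FormulaP}, $\partial_\lambda P_{k,\lambda}\vartheta = k^2\rho_0 \vartheta - (\rho_0 \vartheta')'$, I obtain
\[
Q_{k,\sigma}\,\partial_\lambda\phi_j = (\partial_\lambda \gamma_j)\, P_{k,\lambda}\phi_j + \gamma_j (\partial_\lambda P_{k,\lambda})\phi_j + \gamma_j P_{k,\lambda}\,\partial_\lambda\phi_j.
\]
Pairing with $\phi_j$ in $L^2(I)$ and invoking the symmetry of both $Q_{k,\sigma}$ and $P_{k,\lambda}$ together with the eigenvalue relation above, the two terms containing $\partial_\lambda\phi_j$ cancel exactly, leaving, after an integration by parts justified by $\phi_j \in H_0^2(I)$,
\[
(\partial_\lambda \gamma_j)\, \bB_{k,\lambda}(\phi_j,\phi_j) = -\gamma_j \int_I \bigl( k^2 \rho_0 \phi_j^{\,2} + \rho_0 (\phi_j')^2 \bigr).
\]

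The sign is then transparent: the bilinear form $\bB_{k,\lambda}(\phi_j,\phi_j)$ is strictly positive by coercivity (Proposition \ref{PropOpe_P}), the right-hand integral is strictly positive by \eqref{RhoPositive}, and $\gamma_j > 0$ for $1 \le j \le \sN$ by \eqref{PositiveEigen}. Hence $\partial_\lambda \gamma_j(k,\lambda,\sigma) < 0$, yielding strict monotonicity. The main — though minor — obstacle is the regularity bookkeeping needed to justify the differentiation: Lemma \ref{LemGammaCont} only provides differentiability of $\psi_j$ in $L^2(I)$, so one has to verify that $\phi_j = P_{k,\lambda}^{-1/2}\psi_j$ inherits enough smoothness in $\lambda$ for the formal computation above to make rigorous sense and for $\partial_\lambda \phi_j$ to lie in a domain where the symmetry identities of $Q_{k,\sigma}$ and $P_{k,\lambda}$ apply. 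Alternatively, this differentiation argument can be bypassed entirely in favor of the Courant–Fischer min-max characterization derived from the variational formula underlying Proposition \ref{PropGamma1}: on the optimizing subspace for $\lambda_2 > \lambda_1$, positivity of the numerator (guaranteed by $\gamma_j > 0$) combined with strict monotonicity of $\lambda \mapsto \bB_{k,\lambda}(\vartheta,\vartheta)$ forces $\gamma_j(k,\lambda_2,\sigma) < \gamma_j(k,\lambda_1,\sigma)$.
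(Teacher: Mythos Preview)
Your argument is correct and follows essentially the same route as the paper: differentiate the generalized eigenvalue relation $Q_{k,\sigma}\phi_j=\gamma_j P_{k,\lambda}\phi_j$ in $\lambda$, pair with $\phi_j$, and use the symmetry of $Q_{k,\sigma}$ and $P_{k,\lambda}$ to cancel the $\partial_\lambda\phi_j$ contributions. The only cosmetic difference is that the paper rewrites the relation as $\frac{1}{\gamma_j}Q_{k,\sigma}\phi_j=P_{k,\lambda}\phi_j$ and tracks $\frac{d}{d\lambda}(1/\gamma_j)$, ending with the viscous term $\mu\int_I((\phi_j'')^2+2k^2(\phi_j')^2+k^4\phi_j^2)$ on the right rather than your inertial term $\int_I\rho_0(k^2\phi_j^2+(\phi_j')^2)$; both are strictly positive, so the monotonicity conclusion is the same.
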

\begin{proof}
 Let $z_j=z_{j,k,\lambda,\sigma} = \frac{d}{d\lambda} \psi_{j,k,\lambda,\sigma}$, which enjoys 
\[
z_j(0)=z_j'(0)=z_j(1)=z_j'(1)=0.
\]
In view of \eqref{EqQ_Gamma_Lambda}, we get 
\[
\frac1{\gamma_j} Q_{k,\sigma} z_{j,\lambda,\sigma} + \frac{d}{d\lambda}\Big(\frac1{\gamma_j}  \Big) Q_{k,\sigma} \phi_j= P_{k,\lambda} z_j +\mu (\phi_j^{(4)}-2k^2\phi_j''+k^4\phi_j),
\]
Multiplying by $\phi_j$ on both sides of the resulting equation, we have 
\begin{equation}\label{1_GammaDecrease}
\begin{split}
&\frac1{\gamma_j} \langle Q_{k,\sigma} z_j, \phi_j \rangle+ \frac{d}{d\lambda}\Big(\frac1{\gamma_j}  \Big)   \langle Q_{k,\sigma} \phi_j,\phi_j \rangle = \langle P_{k,\lambda} z_j,\phi_j \rangle + \mu \int_I (\phi_j^{(4)}-2k^2\phi_j''+k^4\phi_j)\phi_j.
\end{split}
\end{equation}
Using  Propositions \ref{PropOpe_Q},  \ref{PropOpe_P}, we have
\begin{equation}\label{2_GammaDecrease}
\frac1{\gamma_j} \langle Q_{k,\sigma} z_j,\phi_j \rangle = \frac1{\gamma_j}\langle z_j, Q_{k,\sigma} \phi_j \rangle =   \langle z_j , P_{k,\lambda} \phi_j\rangle= \langle  P_{k,\lambda}z_j,  \phi_j\rangle.
\end{equation}
Substituting \eqref{2_GammaDecrease} into \eqref{1_GammaDecrease}, and using \eqref{EqQ_Gamma_Lambda}  again, we obtain 
\begin{equation}\label{3_GammaDecrease}
 \frac{d}{d\lambda}\Big(\frac1{\gamma_j}  \Big) \gamma_j   \langle P_{k,\lambda}\phi_j, \phi_j \rangle = \mu \int_I (\phi_j^{(4)}-2k^2\phi_j''+k^4\phi_j)\phi_j. \end{equation}
Thanks to the integration by parts and \eqref{EqVariationalForm}, we get further
\begin{equation}\label{4_GammaDecrease}
\frac{d}{d\lambda}\Big(\frac1{\gamma_j}  \Big)  \gamma_j^+ \bB_{k,\lambda}(\phi_j,\phi_j) =\mu  \int_I ((\phi_j'')^2+2k^2(\phi_j')^2+k^4\phi_j^2)>0.
 \end{equation}
It follows from \eqref{4_GammaDecrease}  that $\frac1{\gamma_j(k,\lambda,\sigma)}$ is  increasing in $\lambda$, i.e. $\gamma_j(k,\lambda,\sigma)$ is decreasing in $\lambda>0$.
\end{proof}

We are in position to prove Theorem \ref{ThmModeNormal}. 
\begin{proof}[Proof of Theorem \ref{ThmModeNormal}]
For each $j \in [1,\sN]$, we solve Eq. \eqref{EqFindLambda}. Since $\gamma_j(k,\lambda,\sigma)$ is a decreasing function in $\lambda$, we have $\gamma_j^+(k,\lambda,\sigma)> \gamma_j(k,\epsilon,\sigma)>0$ for any $0<\lambda\leq \epsilon$. This yields 
\begin{equation}\label{Limit0Gamma}
 \frac{\lambda}{\gamma_j(k,\lambda,\sigma)} \leq \frac{\lambda}{\gamma_j(k,\epsilon,\sigma)} \searrow 0 \quad\text{as } \lambda \searrow 0^+. 
\end{equation}
Meanwhile, using \eqref{EqVariationalForm} and \eqref{EqQ_Gamma_Lambda} again, we obtain 
\[
gk^2\int_I \rho_0' \phi_j^2 \geq \gamma_j^+(k,\lambda,\sigma)\Big( \lambda k^2 \int_I \rho_0 \phi_j^2 + \mu k^4 \int_I \phi_j^2\Big), 
\]
yielding 
\begin{equation}\label{LimitInftyGamma}
 \frac{\lambda}{\gamma_j(k,\lambda,\sigma)} \geq \frac{\lambda^2 \min_I\rho_0+\lambda\mu k^2 }{g\max_I\rho_0'} \nearrow +\infty \quad\text{as }\lambda\nearrow +\infty. 
\end{equation}
Owing to Lemma \ref{LemGammaDecrease} and two limits \eqref{Limit0Gamma} and \eqref{LimitInftyGamma}, there is a unique $\lambda_j=\lambda_j(k,\sigma) >0$ solving \eqref{EqFindLambda}. Hence, $\phi_j = \phi_{j,k,\lambda_j,\sigma}\in H_0^\infty(I)$ is a solution of \eqref{4thOrderEqPhi}-\eqref{BoundODE} as $\lambda=\lambda_j$, after a bootstrap argument. Note that, for all $1\leq j\leq \sN$,  we have $\lambda_j \in (0, \sqrt{\frac{g}{L_0}})$ since $\lambda_j$ is a characteristic value. 
Theorem \ref{ThmModeNormal} is proven.
\end{proof}

We now go back to the linearized equations \eqref{EqLinearized} and prove Theorem \ref{ThmLinear}.
\begin{proof}[Proof of Theorem \ref{ThmLinear}]
 Let us fix a wavenumber $\vk= (k_1,k_2) \in \mathcal S \cap (L^{-1}\Z)^2$ and deduce from Theorem \ref{ThmModeNormal} to obtain finitely or infinitely many characteristic values $\lambda_j(\sigma)$ $(1\leq j\leq \sN)$ and a smooth solution $\phi_{j,\sigma}$ of \eqref{4thOrderEqPhi}-\eqref{BoundODE} as $\lambda=\lambda_j(\sigma)$. Hence, in view if $\eqref{SystMode_1}_1$ and \eqref{EqPressure}, we define 
\[
\eta_j = -\frac{\rho_0' \phi_j}{\lambda_j} \quad\text{and}\quad q_j = \frac1{\lambda_jk^2}(-\lambda_j^2\rho_0\phi_j'-\lambda_j\mu(k^2\phi_j'-\phi_j''')+ \sigma k^2\rho_0'\rho_0'''\phi_j).
\]
Hence, we find $v_{1,j}$ as a solution of the second-order ODE on $(0,1)$ 
\[
-\lambda^2 \rho_0 v_1 +\lambda k_1 q_j -\lambda \mu (k^2 v_1-v_1'')= \sigma k_1\rho_0'\rho_0''\phi_j=0.
\]
with the boundary conditions $v_1(0)=v_1(1)=0$. Hence, define $v_{2,j}= -(k_1v_{1,j}+\phi_j)/k_2$, we conclude the proof of Theorem \ref{ThmLinear}. 
\end{proof}

\subsection{The maximal growth rate}

Letting $\lambda=\lambda_1$ in \eqref{VariationalGamma1}, we deduce the variational formulation of the largest characteristic value,
\begin{equation}\label{EqLambda_1}
\frac{\lambda_1}{k^2} = \max_{\vartheta\in H_0^2(I)} \frac{g\int_I\rho_0\vartheta^2 - \sigma \int_I(\rho_0')^2 (k^2\vartheta^2+ (\vartheta')^2)}{\bB_{k,\lambda_1}(\vartheta,\vartheta)}.
\end{equation}
In view of  \eqref{EqLambda_1} and the horizontal Fourier transform, we obtain the following lemma, in the same pattern as \cite[Lemma 4.1]{JJW16} and \cite[Lemma 4.1]{Zha22}.
\begin{lemma}\label{LemMaxLambda}
For any function $w\in H^1(\Omega)$ such that $\text{div}w=0$. There holds
\begin{equation}\label{LemLambda}
\int ( g\rho_0'|w|^2 - \sigma (\rho_0')^2 |\nabla w_3|^2 )   \leq \Lambda^2 \int\rho_0|w|^2  + \Lambda\mu \int\rho_0|\nabla w|^2 .
\end{equation}
\end{lemma}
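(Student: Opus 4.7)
The plan is to expand $w$ in a horizontal Fourier series and reduce \eqref{LemLambda} to a one-dimensional, mode-by-mode estimate that I will control through the variational characterization \eqref{EqLambda_1} of $\lambda_1(\vk,\sigma)$ combined with $\lambda_1(\vk,\sigma)\leq\Lambda$.

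First I write
\[
w(x_h,x_3)=\sum_{\vk\in(L^{-1}\Z)^2} e^{i\vk\cdot x_h}\,\hat w(\vk,x_3),\qquad x_h=(x_1,x_2),
\]
and use Parseval to split every integral on $\Omega$ into a sum over $\vk$ of integrals on $I$. The divergence-free constraint becomes $i\vk\cdot\hat w_h(\vk,\cdot)+\hat w_3'(\vk,\cdot)=0$, and the no-slip boundary condition $w|_{\partial\Omega}=0$ forces $\hat w_j(\vk,0)=\hat w_j(\vk,1)=0$, so in particular $\hat w_3(\vk,\cdot)\in H_0^1(I)$. The $\vk=0$ mode is trivial: $\hat w_3(0,\cdot)$ is constant by the divergence-free relation and then zero by the boundary condition. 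For $\vk\neq 0$ with $\sigma\geq\sigma_c(k)$ (i.e.\ $\vk\notin\cS$), the definition \eqref{kCapCritical} of $\sigma_c(k)$ makes the Fourier-$\vk$ contribution to the left-hand side nonpositive, and the matter is trivial.

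The substantive case is $\vk\in\cS$. I take $\vartheta:=\hat w_3(\vk,\cdot)$ as test function in \eqref{EqLambda_1}, obtaining
\[
g\int_I\rho_0'\vartheta^2 - \sigma\int_I(\rho_0')^2(k^2\vartheta^2+(\vartheta')^2)\;\leq\;\frac{\lambda_1(\vk,\sigma)}{k^2}\,\bB_{k,\lambda_1}(\vartheta,\vartheta).
\]
Expanding $\bB_{k,\lambda_1}$ via \eqref{BilinearForm}, using $\lambda_1(\vk,\sigma)\leq\Lambda$, and converting $\vartheta$-derivatives back to derivatives of $\hat w$ through $\vartheta'=-i\vk\cdot\hat w_h$ (whence $|\vartheta'|^2\leq k^2|\hat w_h|^2$) and, differentiating once more, $\vartheta''=-i\vk\cdot\hat w_h'$ (whence $|\vartheta''|^2\leq k^2|\hat w_h'|^2$), the right-hand side is controlled by $\Lambda^2\int_I\rho_0|\hat w(\vk)|^2+\Lambda\mu\int_I\rho_0(k^2|\hat w(\vk)|^2+|\hat w'(\vk)|^2)$. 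Summing over $\vk$ via Parseval reconstitutes exactly the right-hand side of \eqref{LemLambda}.

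The main obstacle is that the Rayleigh quotient \eqref{EqLambda_1} is phrased over $H_0^2(I)$, whereas $\hat w_3(\vk,\cdot)$ is only in $H_0^1(I)$; one does not a priori control the $\mu\int(\vartheta'')^2$ piece of $\bB_{k,\lambda_1}$ from $w\in H^1(\Omega)$. The key trick is that the divergence-free identity, once differentiated, pays for this $H^2$-norm of $\vartheta$ with a single horizontal derivative of $\hat w_h$—exactly the kind of term absorbed on the right by $\Lambda\mu\int\rho_0|\nabla w|^2$. After that conversion, extending the variational bound to $H_0^1(I)$ test functions is a routine density argument (approximate $\hat w_3$ in $H_0^1(I)$ by an $H_0^2(I)$-sequence and pass to the limit in the rewritten inequality); the remainder is careful bookkeeping of constants.
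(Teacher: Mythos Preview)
Your approach is correct and is precisely the route the paper indicates (horizontal Fourier expansion together with the variational characterization \eqref{EqLambda_1}, as in the cited references \cite{JJW16,Zha22}); the paper does not spell out the details, and your treatment of the three regimes $\vk=0$, $\vk\notin\cS$, $\vk\in\cS$ and the divergence-free identity $\hat w_3' = -i\vk\cdot\hat w_h$, $\hat w_3'' = -i\vk\cdot\hat w_h'$ is exactly what is needed. One minor remark: the density step should be phrased as approximating the full vector field $w$ in the divergence-free subspace of $H^1_0(\Omega)$ by smoother fields (so that all components are approximated simultaneously), rather than approximating $\hat w_3$ alone in $H_0^1(I)$; once the inequality is rewritten entirely in terms of $H^1$-quantities of $w$, this passage to the limit is routine.
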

We are in position to prove that $\Lambda$ is the maximal growth rate of the linearized equations \eqref{EqLinearized}-\eqref{BoundLinearized}. 
\begin{proposition}\label{PropSharpGrowthRate}
Let $(\theta,u,q)$ be a solution of the linearized equations \eqref{EqLinearized}-\eqref{BoundLinearized}, there holds
\begin{equation}\label{IneSharpGrowthRate}
\|(\theta,u)(t)\|_{L^2} \lesssim e^{\Lambda t} \|(\theta, u)(0)\|_{L^2}.
\end{equation}
\end{proposition}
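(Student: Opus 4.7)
The plan is to combine an energy identity for the linearized system with the sharp inequality provided by Lemma~\ref{LemMaxLambda}.

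First I would test the momentum equation in \eqref{EqLinearized} with $u$ and integrate over $\Omega$. The pressure term vanishes by $\text{div}\,u = 0$ together with $u|_{\partial\Omega}=0$; the viscous term yields $-\mu \|\nabla u\|_{L^2}^2$; the gravity term gives $-g\int_\Omega \theta u_3$; and the Korteweg contribution $-\sigma\int_\Omega u\cdot(\rho_0'\Delta\theta\,e_3+\rho_0''\nabla\theta)$ is rewritten using the continuity equation $\partial_t\theta=-\rho_0' u_3$, which in particular forces $\partial_t\theta|_{\partial\Omega}=0$ and therefore makes the integration by parts of $\int \partial_t\theta\,\Delta\theta$ produce no boundary contributions. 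Together with $\text{div}\,u=0$ for handling the $\rho_0''\nabla\theta$ term, this yields the physical energy identity
\[
\frac{d}{dt}\Bigl[\int_\Omega \rho_0|u|^2 - g\int_\Omega \frac{\theta^2}{\rho_0'} + \sigma\int_\Omega |\nabla\theta|^2 + \sigma\int_\Omega \frac{\rho_0'''}{\rho_0'}\theta^2\Bigr] + 2\mu\|\nabla u\|_{L^2}^2 = 0,
\]
which I shall refer to as $(\ast)$.

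Next, at each time $t$ the velocity $u(t,\cdot)$ belongs to $H^1(\Omega)$, is divergence-free, and vanishes on $\partial\Omega$, so Lemma~\ref{LemMaxLambda} applied with $w=u(t,\cdot)$ furnishes the pointwise-in-$t$ sharp bound
\[
g\int_\Omega \rho_0'|u|^2 - \sigma\int_\Omega (\rho_0')^2|\nabla u_3|^2 \leq \Lambda^2 \int_\Omega \rho_0|u|^2 + \Lambda\mu \int_\Omega \rho_0|\nabla u|^2,
\]
which is the quantitative expression of the fact that $\Lambda$ is the maximal growth rate of the unstable modes. Together with the complementary continuity-based identity $\frac{d}{dt}\|\theta\|_{L^2}^2 = -2\int_\Omega \rho_0'\theta u_3$ (obtained by testing $\eqref{EqLinearized}_1$ with $\theta$), the three relations will be combined to build an auxiliary functional $\mathcal{E}(t)\simeq \|(\theta,u)(t)\|_{L^2}^2$ satisfying a Grönwall-type differential inequality $\frac{d}{dt}\mathcal{E}(t)\leq 2\Lambda\,\mathcal{E}(t)$ modulo integrated dissipation; integrating and taking square roots then produces \eqref{IneSharpGrowthRate}.

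The principal obstacle is to engineer the coefficients in $\mathcal{E}$ so that the sharp exponent $2\Lambda$ actually emerges: the identity $(\ast)$ is conservative but indefinite due to the term $-g\int\theta^2/\rho_0'$, and naive Cauchy--Schwarz--Young manipulations only yield growth rates strictly larger than $\Lambda$, so the Lemma bound must be used in an exact way to absorb the unstable gravity contribution with the correct constant. An alternative route that makes the sharpness transparent is to apply the horizontal Fourier decomposition along $(L^{-1}\Z)^2$, reduce \eqref{EqLinearized} to a family of ODEs in $(t,x_3)$ parametrized by $\vk$, invoke the spectral analysis of Theorem~\ref{ThmModeNormal} together with the uniform bound $\lambda_1(\vk,\sigma)\leq\Lambda$ on each mode, and reassemble via Parseval's identity; the $\vk$-uniformity which Lemma~\ref{LemMaxLambda} encodes is precisely what makes this assembly close.
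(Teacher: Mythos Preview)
Your energy identity $(\ast)$ is correct, but the passage from it to a sharp Gr\"onwall inequality with rate $2\Lambda$ has a genuine gap that you yourself flag without resolving. The unstable contribution in $(\ast)$ is $-g\int\theta^2/\rho_0'$ together with the capillary $\theta$-terms, whereas Lemma~\ref{LemMaxLambda} bounds $g\int\rho_0'|u_3|^2-\sigma\int(\rho_0')^2|\nabla u_3|^2$: these are structurally different quantities, since the Lemma applies to divergence-free $H^1$ vector fields and controls $u_3$-based expressions, not $\theta$-based ones. The only link between $\theta$ and $u_3$ is the time-integrated relation $\theta(t)=\theta(0)-\int_0^t\rho_0'u_3\,ds$, which does not allow a pointwise-in-time substitution. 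Consequently no linear combination of $(\ast)$ with $\frac{d}{dt}\|\theta\|_{L^2}^2$ and the Lemma will produce exactly the rate $2\Lambda$; your own remark that ``naive Cauchy--Schwarz--Young manipulations only yield growth rates strictly larger than $\Lambda$'' is the symptom, and the obstacle is not a matter of coefficients but of the wrong test function.

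The paper circumvents this by a second-order-in-time estimate: differentiate $\eqref{EqLinearized}_2$ once in $t$ and test with $\partial_t u$. Because $\partial_t\theta=-\rho_0'u_3$, every $\theta$-term converts into a $u_3$-term, yielding the clean identity
\[
\frac12\frac{d}{dt}\int\bigl(\rho_0|\partial_t u|^2-g\rho_0'|u_3|^2+\sigma(\rho_0')^2|\nabla u_3|^2\bigr)+\mu\int|\nabla\partial_t u|^2=0,
\]
in which the unstable part is \emph{exactly} the left side of Lemma~\ref{LemMaxLambda} with $w=u$. Integrating in time and combining with the elementary splittings $\partial_t\|\sqrt{\rho_0}u\|_{L^2}^2\leq\Lambda^{-1}\|\sqrt{\rho_0}\partial_t u\|_{L^2}^2+\Lambda\|\sqrt{\rho_0}u\|_{L^2}^2$ and $\Lambda\|\nabla u(t)\|_{L^2}^2\leq\int_0^t\|\nabla\partial_t u\|_{L^2}^2+\Lambda^2\int_0^t\|\nabla u\|_{L^2}^2$ then closes a Gr\"onwall inequality at the sharp rate $2\Lambda$ for $\|\sqrt{\rho_0}u\|_{L^2}^2+\mu\int_0^t\|\nabla u\|_{L^2}^2$; the $\theta$-bound follows afterwards by integrating $\partial_t\theta=-\rho_0'u_3$. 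The missing idea in your proposal is precisely this move to the $\partial_t u$ level. Your Fourier alternative is viable in principle but is only sketched.
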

\begin{proof}
We obtain from $\eqref{EqLinearized}_{1,2}$ that 
\[
\rho_0 \partial_t^2 u+\nabla\partial_t q-\mu\Delta\partial_t u = g\rho_0' u_3 + \sigma(\rho_0' \Delta(\rho_0'u_3) e_3 + \rho_0''\nabla(\rho_0'u_3)).
\]
That implies
\[\begin{split}
\frac12 \frac{d}{dt} \int \rho_0 |\partial_t u|^2 + \frac12 \int |\nabla\partial_t u|^2 &= g\int\rho_0' u_3\partial_t u_3 + \sigma \int \rho_0' \Delta(\rho_0'u_3)\partial_t u_3 + \sigma\int \rho_0'' \nabla(\rho_0'u_3) \cdot\partial_t u.
\end{split}
\]
Due to the following equalities,
\begin{equation}\label{LemLambda_5}
\begin{split}
\int\rho_0'  \Delta(\rho_0' u_3)\partial_t u_3 &= -\int\nabla(\rho_0'u_3)\cdot \nabla(\rho_0'\partial_t u_3)  = -\frac12 \frac{d}{dt} \int|\nabla(\rho_0' u_3)|^2, \\
\int\rho_0'' \nabla(\rho_0'u_3) \cdot \partial_t u &=-\int\rho_0' u_3 \text{div}(\rho_0''\partial_t u)  =-\frac12 \frac{d}{dt} \int\rho_0'\rho_0'''|u_3|^2,
\end{split}
\end{equation}
and
\begin{equation}\label{LemLambda_6}
\int |\nabla(\rho_0'u_3)|^2 + \int \rho_0'\rho_0''|u_3|^2 =\int (\rho_0')^2|\nabla u_3|^2,
\end{equation}
we get further 
\[
\frac12 \frac{d}{dt}\int(\rho_0|\partial_t u|^2 - g\rho_0'|u_3|^2 + \sigma (\rho_0')^2 |\nabla u_3|^2 ) + \mu \int\rho_0 |\nabla \partial_t u|^2=0.
\]
Together with \eqref{LemLambda}, we have
\begin{equation}\label{LemLambda_1}
\begin{split}
\|\sqrt{\rho_0}\partial_t u(t)\|_{L^2}^2 + 2\mu \int_0^t \|\nabla \partial_t u(\tau)\|_{L^2}^2 d\tau &= g\int \rho_0'|u_3(t)|^2 - \sigma \int(\rho_0')^2|\nabla u_3(t)|^2 \\
&\leq \Lambda^2 \|\sqrt{\rho_0} u(t)\|_{L^2}^2 +\Lambda\mu \|\nabla u\|_{L^2}^2.
\end{split}
\end{equation}
Meanwhile, we obtain
\begin{equation}\label{LemLambda_2}
 \partial_t\|\sqrt{\rho_0} u(t)\|_{L^2}^2 = 2\int\rho_0 u(t)\cdot \partial_t u(t) \leq \frac1{\Lambda} \|\sqrt{\rho_0}\partial_t u(t)\|_{L^2}^2+ \Lambda\|\sqrt{\rho_0} u(t)\|_{L^2}^2
\end{equation}
and 
\begin{equation}\label{LemLambda_3}
\Lambda\|\nabla u(t)\|_{L^2}^2 = 2\Lambda\int_0^t  \int \nabla \partial_t u(\tau) : \nabla u(\tau) d\tau \leq \int_0^t\|\nabla \partial_t u(\tau)\|_{L^2}^2 +\Lambda^2 \int_0^t\|\nabla u(t)\|_{L^2}^2 d\tau.
\end{equation}
Combining \eqref{LemLambda_1}, \eqref{LemLambda_2} and \eqref{LemLambda_3} gives us that 
\begin{equation}
\partial_t\|\sqrt{\rho_0}u(t)\|_{L^2}^2 + \mu\|\nabla u(t)\|_{L^2}^2 \leq 2\Lambda \Big( \|\sqrt{\rho_0} u(t)\|_{L^2}^2+ \mu \int_0^t\|\nabla u(t)\|_{L^2}^2 d\tau\Big).
\end{equation}
Applying Gronwall's inequality, we deduce 
\begin{equation}\label{LemLambda_4}
 \| u(t)\|_{L^2}^2+ \mu \int_0^t\|\nabla u(\tau)\|_{L^2}^2 d\tau \lesssim e^{2\Lambda t} \|u(0)\|_{L^2}^2.
 \end{equation}
Using $\eqref{EqLinearized}_1$ and \eqref{LemLambda_4}, we get
\[
\|\theta(t)\|_{L^2} \lesssim \|\theta(0)\|_{L^2}+ \int_0^t \|\partial_t\theta(\tau)\|_{L^2} d\tau \lesssim \|\theta(0)\|_{L^2} + \int_0^t \|u_3(\tau)\|_{L^2} d\tau \lesssim e^{\Lambda t} \|(\theta,u)(0)\|_{L^2}.
\]
The inequality \eqref{IneSharpGrowthRate} follows from the resulting inequality and \eqref{LemLambda_4}. Proof of Lemma \ref{PropSharpGrowthRate} is complete.
\end{proof}

\section{Nonlinear instability}\label{SectNonlinear}
\subsection{A priori energy estimates}\label{SectEnergyEstimates}

We refer to \cite{SBGLR06, TW10,  Has16, Wan17, JB23} to the local existence of regular solutions to the incompressible Navier-Stokes-Korteweg equations. Let $(\theta, u)(t)$ $(t\in [0,T^{\max}))$ be a local-in-time solution to the nonlinear equations \eqref{EqNSK_Pertur} with the initial data $(\theta, u)(0)$ such that 
%Define  where $[0,T^{\max})$ is the maximal interval in time of local regular solutions of Eq. \eqref{EqNSK_Pertur}.
%\[
%\cE((\theta,u)(t)) := \sqrt{\|\theta(t)\|_{H^3}^2 +\|u(t)\|_{H^3}^2}>0
%\]
% and for short, we write $\cE$ in this section. We assume that 
\begin{equation}\label{AssumeSmallE}
\sup_{t\in [0,T^{\max})} \sqrt{\|\theta(t)\|_{H^3}^2 +\|u(t)\|_{H^3}^2} \leq \delta_0 \ll 1.
\end{equation}
The aim of this section is to demonstrate the following inequality.

\begin{proposition}\label{PropEstimates}
Let $\cE(t) := \sqrt{\|\theta(t)\|_{H^3}^2 +\|u(t)\|_{H^3}^2}>0$.  Under the smallness assumption \eqref{AssumeSmallE}. For any $\varepsilon>0$, there holds 
\begin{equation}\label{EstApriori}
\begin{split}
&\cE^2(t) +\|\partial_t u(t)\|_{H^1}^2 + \|\partial_t\theta(t)\|_{L^2}^2 +\|\nabla q(t)\|_{L^2}^2 + \int_0^t (\|\nabla u(s)\|_{H^2}^2+ \|\partial_t u(s)\|_{H^1}^2 + \|\partial_t^2 u(s)\|_{L^2}^2) ds \\
&\leq C_0 \Big( \varepsilon^{-1} \cE^2(0)+\varepsilon  \int_0^t\cE^2(s) ds+ \varepsilon^{-5} \int_0^t\|(\theta,u)(s)\|_{L^2}^2 ds+ \varepsilon^{-1} \int_0^t \cE^3(s)ds\Big),
\end{split}
\end{equation}
where $C_0$ is a generic constant being independent of $\varepsilon$. 
\end{proposition}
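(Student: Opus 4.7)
The plan is a high-order energy method, together with pressure and time-derivative estimates, in the spirit of \cite{Tai22}. The key structural identity is the following: testing $\eqref{EqNSK_Pertur}_2$ against $u$ and substituting $\rho_0' u_3 = -\partial_t\theta - u\cdot\nabla\theta$ from $\eqref{EqNSK_Pertur}_1$ into the Korteweg contribution yields
\[
\sigma\int(\rho_0'\Delta\theta\,e_3 + \rho_0''\nabla\theta)\cdot u \;=\; \tfrac{\sigma}{2}\partial_t\|\nabla\theta\|_{L^2}^2 \;+\; \mathcal R,
\]
where $\mathcal R$ collects lower-order linear remainders (controlled by $\varepsilon\cE^2 + \varepsilon^{-1}\|(\theta,u)\|_{L^2}^2$) and cubic remainders (controlled by $\cE^3$). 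The natural basic energy is therefore $\tfrac12\|\sqrt{\rho_0}\,u\|_{L^2}^2 + \tfrac{\sigma}{2}\|\nabla\theta\|_{L^2}^2$ with parabolic dissipation $\mu\|\nabla u\|_{L^2}^2$. The unsigned coupling $g\int\theta u_3$ is absorbed by Young's inequality, which is precisely what produces the $\varepsilon$-weighted structure on the right-hand side of \eqref{EstApriori}.

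I then iterate this identity at each spatial differentiation order. Applying $\partial^\alpha$ with $|\alpha|\le 2$ to $\eqref{EqNSK_Pertur}$ and testing against $\partial^\alpha u$, the cancellation carries through; commutators with the smooth coefficients $\rho_0', \rho_0''$ produce only lower-order terms absorbed into $\cE^2$, while the nonlinear terms $u\cdot\nabla\theta$, $\theta\partial_t u$, $(\rho_0+\theta)u\cdot\nabla u$, $\sigma\nabla\theta\Delta\theta$ are tamed by Moser-Kato product estimates combined with $H^3 \hookrightarrow L^\infty$, contributing at most $\cE^3$. Summation and time integration then yield bounds on $\|u\|_{H^2}^2 + \sigma\|\nabla\theta\|_{H^2}^2$ and $\int_0^t\|\nabla u\|_{H^2}^2$. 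The $L^2$-bound on $\theta$ itself follows directly from $\eqref{EqNSK_Pertur}_1$ tested against $\theta$ combined with Gronwall, since the transport nonlinearity is divergence-free.

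For the remaining quantities: $\eqref{EqNSK_Pertur}_1$ gives $\|\partial_t\theta\|_{L^2}\lesssim \|u\|_{L^2} + \cE^2$ pointwise in time; differentiating $\eqref{EqNSK_Pertur}_2$ in $t$ and testing against $\partial_t u$ (respectively $\partial_t^2 u$) recovers $\|\partial_t u\|_{H^1}^2$ together with $\int_0^t\|\partial_t u\|_{H^1}^2$ and $\int_0^t\|\partial_t^2 u\|_{L^2}^2$, using again the same structural trick to handle the $\partial_t$-differentiated Korteweg term. The pressure $\nabla q$ is read off algebraically from $\eqref{EqNSK_Pertur}_2$; after solving the associated Neumann problem obtained from $\mathrm{div}\,u=0$, one obtains $\|\nabla q\|_{L^2}^2 \lesssim \|\partial_t u\|_{L^2}^2 + \|u\|_{H^2}^2 + \|\theta\|_{H^3}^2 + \cE^4$.

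The main obstacle is the absence of parabolic dissipation for $\theta$: the density equation is pure transport, yet $\cE$ contains $\|\theta\|_{H^3}$ and the Korteweg coupling $\sigma\rho_0'\Delta\theta\,e_3$ sits at third order in $\theta$ in the momentum equation. I resolve this by an algebraic trade: since $\rho_0' \ge c_0 > 0$ on $I$, the momentum equation reads off $\sigma\rho_0'\nabla\Delta\theta$ in terms of $\rho_0\partial_t u$, $\mu\Delta u$, $\nabla q$, $g\theta e_3$ and cubic remainders, yielding $\|\theta\|_{H^3}^2 \lesssim \|\partial_t u\|_{L^2}^2 + \|u\|_{H^2}^2 + \|\nabla q\|_{L^2}^2 + \cE^4$; dually, $\|u\|_{H^3}$ is recovered by Stokes elliptic regularity applied to $\eqref{EqNSK_Pertur}_2$ once $\|\partial_t u\|_{H^1}$ and $\|\theta\|_{H^3}$ are under control. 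Assembling all these ingredients and applying Young's inequality with chained $\varepsilon$-weights --- each bad linear contribution forcing a fresh Young step, the power $\varepsilon^{-5}$ emerging from propagating the $\|(\theta,u)\|_{L^2}^2$ term through the chain of absorption steps --- closes the estimate and delivers \eqref{EstApriori}.
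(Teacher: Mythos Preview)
Your overall architecture coincides with the paper's: the Korteweg cancellation
\[
-\sigma\int (u\cdot\nabla(\rho_0+\theta))\,\Delta\theta \;=\; \sigma\int \partial_t\theta\,\Delta\theta \;=\; -\tfrac{\sigma}{2}\,\tfrac{d}{dt}\|\nabla\theta\|_{L^2}^2
\]
is iterated to $|\alpha|=2$, time-differentiated versions give the $\partial_t u$ and $\partial_t^2 u$ controls, and Stokes regularity recovers $\|u\|_{H^3}$ and $\|\nabla q\|$. Up through the sentence ``Summation and time integration then yield bounds on $\|u\|_{H^2}^2+\sigma\|\nabla\theta\|_{H^2}^2$ and $\int_0^t\|\nabla u\|_{H^2}^2$'' you are on exactly the paper's track.

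The final paragraph, however, is both incorrect and unnecessary. First, the momentum equation contains $\sigma\rho_0'\Delta\theta\,e_3$ (second order, and only in the third component), not $\sigma\rho_0'\nabla\Delta\theta$; there is nothing to ``read off'' at third order. Second, even the second-order reading is weak: $\theta$ satisfies no boundary condition on $\partial\Omega$, so $\|\Delta\theta\|_{L^2}$ does not by itself control $\|\theta\|_{H^2}$. Third, if you differentiate once to reach $H^3$, the right-hand side now carries $\|\nabla q\|_{H^1}$, which via your own pressure estimate depends back on $\|\theta\|_{H^3}$ --- the loop does not close.

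The good news is that you do not need this device at all. The energy identity you already described at level $|\alpha|=2$ delivers $\|\nabla\theta\|_{H^2}^2$ \emph{pointwise in time} as part of the energy (not as dissipation); combined with the $L^2$ transport bound for $\theta$ this gives the full $\|\theta\|_{H^3}$. This is precisely how the paper proceeds (its Proposition~\ref{PropNormU_H3_Theta_H4}). Then Stokes regularity, fed $\|\partial_t u\|_{H^1}+\|\theta\|_{H^3}$, returns $\|u\|_{H^3}$ and $\|\nabla q\|$, and the $\varepsilon$-weighted assembly closes as you outlined. Drop the ``algebraic trade'' paragraph and the proof is essentially the paper's.
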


 We list below some classical Sobolev estimates frequently used later (see e.g. \cite{AF05}), which are 
\begin{equation}\label{EstClassical}
\begin{split}
\|v\|_{L^4} &\lesssim \|v\|_{L^2}^{1/4}\|v\|_{H^1}^{1/4} \lesssim \|v\|_{H^1},\\
\|v\|_{L^\infty} &\lesssim \|v\|_{H^2}, \\
\| v\|_{H^j} &\lesssim \|v\|_{L^2}^{1/(j+1)} \|v\|_{H^{j+1}}^{j/(j+1)} \lesssim \nu^{-j} \|v\|_{L^2}+ \nu\|v\|_{H^{j+1}} \quad\text{for any }j\geq 0, \nu >0.
\end{split}
\end{equation}
Note that from the continuity equation $\eqref{EqNSK_Pertur}_1$ and the incompressibility condition, we have for any $t\in (0,T^{\max})$ and any $x\in\Omega$ that
\begin{equation}
0 <\frac12 \min_I \rho_0(x_3) < \rho_0(x_3) + \theta(t,x) < \frac32 \max_I \rho_0(x_3).
\end{equation}

Let us start with the two following lemmas. 
\begin{lemma}\label{LemBoundU}
There holds
\begin{equation}\label{BoundDtUinH^1byE}
\|\partial_t u\|_{L^2} \lesssim \|\theta\|_{H^2}+\|u\|_{H^2}, \quad \| \partial_t u\|_{H^1} \lesssim \|\theta\|_{H^3}+ \|u\|_{H^3}.
\end{equation}
\end{lemma}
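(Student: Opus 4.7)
The plan is to solve the momentum equation \eqref{EqNSK_Pertur}$_2$ algebraically for $\partial_t u$ and then bound it directly, exploiting the fact that under the smallness assumption \eqref{AssumeSmallE} the total density $\rho := \rho_0 + \theta$ is uniformly bounded away from zero. Collecting all terms without $\partial_t u$ on the right, I would rewrite \eqref{EqNSK_Pertur}$_2$ as
\[
\rho\,\partial_t u + \nabla q \;=\; G, \qquad G := \mu\Delta u - \sigma\big(\rho_0'\Delta\theta\, e_3 + \rho_0''\nabla\theta\big) - g\theta e_3 - \rho\, u\cdot\nabla u - \sigma\nabla\theta\,\Delta\theta.
\]
Since $\text{div}\, u = 0$ and $u|_{\partial\Omega}=0$ for all $t$, the time derivative $\partial_t u$ is also divergence-free and vanishes on $\partial\Omega$, which will be crucial for eliminating the pressure in the estimates below.

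For the $L^2$ bound I would take the $L^2(\Omega)$ inner product of the rewritten equation against $\partial_t u$. The pressure contribution $\int \nabla q\cdot\partial_t u$ vanishes by integration by parts using $\text{div}\,\partial_t u = 0$ together with $\partial_t u|_{\partial\Omega}=0$, leaving
\[
\int \rho\,|\partial_t u|^2 \;=\; \int G\cdot\partial_t u \;\leq\; \|G\|_{L^2}\,\|\partial_t u\|_{L^2}.
\]
The lower bound $\rho \geq \tfrac12 \min_I \rho_0 > 0$ valid under \eqref{AssumeSmallE} then yields $\|\partial_t u\|_{L^2} \lesssim \|G\|_{L^2}$. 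The linear pieces of $G$ are immediately bounded by $\|\theta\|_{H^2}+\|u\|_{H^2}$, while the quadratic terms $\rho\,u\cdot\nabla u$ and $\sigma\nabla\theta\,\Delta\theta$ are controlled via Sobolev products (e.g.\ $\|u\cdot\nabla u\|_{L^2}\lesssim \|u\|_{L^\infty}\|\nabla u\|_{L^2}\lesssim \|u\|_{H^2}^2$) and absorbed by the smallness of $\cE$.

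For the $H^1$ bound the idea is to take the curl of $\rho\,\partial_t u + \nabla q = G$, which eliminates the pressure entirely:
\[
\rho\,\text{curl}\,\partial_t u + \nabla\rho \times \partial_t u \;=\; \text{curl}\, G,
\]
giving $\|\text{curl}\,\partial_t u\|_{L^2} \lesssim \|G\|_{H^1} + \|\nabla\rho\|_{L^\infty}\|\partial_t u\|_{L^2}$. Because $\partial_t u|_{\partial\Omega} = 0$, the boundary contributions in the classical identity
\[
\int_\Omega|\nabla v|^2 \;=\; \int_\Omega |\text{curl}\, v|^2 + \int_\Omega (\text{div}\, v)^2
\]
all vanish, and applying this to $v = \partial_t u$ yields $\|\nabla \partial_t u\|_{L^2} = \|\text{curl}\,\partial_t u\|_{L^2}$. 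Combined with the $L^2$ bound just proved and the Sobolev-algebra estimate $\|G\|_{H^1} \lesssim \|\theta\|_{H^3} + \|u\|_{H^3}$, this produces the second inequality.

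The main technical work is checking each product estimate for $G$ in $H^1(\Omega)$: the capillary nonlinearity bounded via $\|\nabla\theta\,\Delta\theta\|_{H^1}\lesssim \|\theta\|_{H^3}^2$ and the convective term via $\|\rho\, u\cdot\nabla u\|_{H^1}\lesssim (1+\|\theta\|_{H^2})\|u\|_{H^2}\|u\|_{H^3}$ are the most delicate. Both contribute quantities of order $\cE\,(\|\theta\|_{H^3}+\|u\|_{H^3})$, which can be absorbed into the linear right-hand side thanks to \eqref{AssumeSmallE}.
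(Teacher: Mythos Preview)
Your $L^2$ argument is essentially the paper's: both test the momentum equation against $\partial_t u$, drop the pressure via $\text{div}\,\partial_t u=0$ and $\partial_t u|_{\partial\Omega}=0$, and bound the right-hand side by product estimates, with the smallness \eqref{AssumeSmallE} absorbing the quadratic pieces.

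For the $H^1$ bound the two proofs part ways. The paper applies $\partial_j$ ($j=1,2,3$) to \eqref{EqNSK_Pertur_2nd} and tests against $\partial_t\partial_j u$; the pressure integral $\int\nabla\partial_j q\cdot\partial_t\partial_j u$ still vanishes because $\text{div}\,\partial_t\partial_j u=0$ and $(\partial_t\partial_j u)\cdot n=\partial_t\partial_j u_3=0$ on $\partial\Omega$ (for $j=3$ this uses $\partial_3 u_3=-\partial_1 u_1-\partial_2 u_2=0$ on the flat boundary). Your curl route is a genuine alternative: the pressure is killed instantly, and the identity $\|\nabla v\|_{L^2}=\|\text{curl}\,v\|_{L^2}$ for $v\in H_0^1(\Omega)$ with $\text{div}\,v=0$ converts the curl estimate into the full gradient bound. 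Your approach is a bit cleaner in that it eliminates the pressure without any boundary-trace check; the paper's direct-differentiation approach is closer in spirit to the higher-order energy estimates used later (e.g.\ Proposition~\ref{PropNormU_H3_Theta_H4}), where the same test-against-$\partial^\alpha u$ pattern is iterated. Both require exactly the same Sobolev product bounds on $G$ in $H^1$.
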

\begin{proof}
We rewrite  $\eqref{EqNSK_Pertur}_2$ as
\begin{equation}\label{EqNSK_Pertur_2nd}
(\rho_0+\theta)\partial_t u+ \nabla p - \mu\Delta u +(\rho_0+\theta)u\cdot\nabla u +\sigma \nabla(\rho_0+\theta)\Delta \theta + \sigma \rho_0''\nabla\theta +g\theta e_3=0.
\end{equation} It follows from \eqref{EqNSK_Pertur_2nd} and the integration by parts that 
\[
\begin{split}
\int(\rho_0+\theta)|\partial_t u|^2 &= \mu\int \Delta u\cdot\partial_t u -\int (\rho_0+\theta)(u\cdot\nabla u)\cdot\partial_t u \\
&\qquad- \sigma\int \Delta\theta (\nabla(\rho_0+\theta)\cdot\partial_t u) -\sigma \int\rho_0''\nabla\theta \cdot\partial_tu - g\int\theta\partial_tu_3\\
&\lesssim (\|\Delta u\|_{L^2}+ \|(\rho_0+\theta) u\cdot\nabla u\|_{L^2} + \|\Delta \theta \nabla\theta\|_{L^2} + \| \theta\|_{H^2}) \|\partial_t u\|_{L^2}. 
\end{split}
\]
Thanks to Sobolev embedding, $\eqref{EstClassical}_1$ and Young's inequality, we obtain for any $\nu>0$ that, 
\[
\begin{split}
\frac12 \inf_{\overline\Omega} \rho_0 \|\partial_t u\|_{L^2}^2 &\lesssim  (\|\Delta u\|_{L^2} +(1+\|\theta\|_{H^2})\|u\|_{H^2}\|\nabla u\|_{L^2}+ \|\Delta\theta\|_{L^4}\|\nabla\theta\|_{L^4}+ \|\theta\|_{H^2}) \|\partial_t u\|_{L^2} \\
&\lesssim (\|u\|_{H^2}+ \|\theta\|_{H^2}) \|\partial_t u\|_{L^2} \\
&\lesssim \nu \|\partial_t u\|_{L^2}^2+\nu^{-1}  (\|u\|_{H^2} +\|\theta\|_{H^2})^2.
\end{split}
\]
Let $\nu$ be sufficiently small, we obtain $ \|\partial_t u\|_{L^2} \lesssim \|\theta\|_{H^3}+\|u\|_{H^2}$.

Let $j=1,2$ or 3, we have 
\begin{equation}
\begin{split}
&(\rho_0+\theta) \partial_t \partial_ju+ \partial_j(\rho_0+\theta) \partial_t u +\nabla\partial_j q-\mu\Delta\partial_j u + \partial_j ((\rho_0+\theta)u\cdot \nabla u) \\
&\qquad+\sigma \partial_j(\nabla\theta \Delta\theta) + \sigma \partial_j(\rho_0''\nabla\theta+ \rho_0'\Delta \theta e_3) + g\partial_j\theta e_3=0.
\end{split}
\end{equation}
Note that, by Sobolev embedding and $\eqref{EstClassical}_1$,  
\[
\begin{split}
\|\nabla(\nabla \theta\Delta\theta)\|_{L^2} \lesssim \|\nabla^2 \theta\|_{L^4}\|\Delta\theta\|_{L^4} + \|\nabla\theta\|_{H^2}\|\Delta\theta\|_{H^1} \lesssim \|\theta\|_{H^3}^2.
\end{split}
\]
Hence, by the same arguments as the proof of $ \|\partial_t u\|_{L^2} \lesssim \|\theta\|_{H^3}+\|u\|_{H^2}$, we obtain 
\[
\begin{split}
\int(\rho_0+\theta)|\partial_t \partial_j u|^2 &= -\int \partial_j(\rho_0+\theta) \partial_t u\cdot\partial_t\partial_j u  -\int \partial_j ((\rho_0+\theta)(u\cdot\nabla u))\cdot\partial_t\partial_j u \\
&\qquad + \mu\int \Delta\partial_j u\cdot\partial_t \partial_j u -\sigma \int \partial_j(\nabla\theta \Delta\theta  +\rho_0''\nabla\theta+ \rho_0'\Delta \theta e_3) \cdot\partial_t \partial_j u \\
&\qquad- g\int\partial_j \theta\partial_t \partial_j u_3 \\
&\lesssim  ((1+\|\theta\|_{H^3} )(\|\partial_t u\|_{L^2} + \|u\|_{H^3}^2+\|\theta\|_{H^3}) + \|u\|_{H^3}) \| \partial_t\partial_j u\|_{L^2}\\
&\lesssim ( \|u\|_{H^3}+\|\theta\|_{H^3})\| \partial_t\partial_j u\|_{L^2}.
\end{split}
\]
By Young's inequality, we obtain $ \|\partial_j \partial_t u\|_{L^2} \lesssim \|\theta\|_{H^3}+\|u\|_{H^3}$. The inequality \eqref{BoundDtUinH^1byE}, i.e. Lemma \ref{LemBoundU} thus follows. 
\end{proof}

\begin{lemma}\label{LemBoundTheta}
There holds 
\begin{equation}\label{BoundDtThetaH2}
\|\partial_t\theta\|_{H^2} \lesssim \|u\|_{H^2}, \quad \|\partial_t \theta\|_{H^3}\lesssim \|u\|_{H^3}.
\end{equation}
%\begin{equation}\label{BoundDt^2ThetaH1}
%\|\partial_t^2\theta\|_{H^1} \lesssim  \|\theta\|_{H^3}+\|u\|_{H^3}.
%\end{equation}
\end{lemma}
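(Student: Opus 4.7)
The starting point is the continuity equation $\eqref{EqNSK_Pertur}_1$, which I rewrite as
\[
\partial_t\theta = -\rho_0' u_3 - u\cdot\nabla\theta.
\]
The plan is therefore to control each of the two terms on the right-hand side separately in $H^2$ and $H^3$, using smoothness of $\rho_0$, the smallness assumption \eqref{AssumeSmallE}, and tame product estimates combined with the Sobolev embeddings $H^1\hookrightarrow L^4$ and $H^2\hookrightarrow L^\infty$ already recalled in \eqref{EstClassical} and exploited in the proof of Lemma \ref{LemBoundU}.

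For the linear piece, the smoothness of $\rho_0$ from \eqref{RhoPositive} makes multiplication by $\rho_0'$ bounded on every $H^j$, so $\|\rho_0'u_3\|_{H^j}\lesssim \|u\|_{H^j}$ is immediate for $j=2,3$. For the convective term in the $H^2$ case, the Banach algebra property of $H^2$ on the three-dimensional domain $\Omega$ (since $2>3/2$) yields
\[
\|u\cdot\nabla\theta\|_{H^2} \lesssim \|u\|_{H^2}\|\nabla\theta\|_{H^2} \lesssim \|u\|_{H^2}\|\theta\|_{H^3} \lesssim \delta_0 \|u\|_{H^2},
\]
and combining with the linear bound, after absorbing the $\delta_0$ factor, gives the first inequality of \eqref{BoundDtThetaH2}. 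For the $H^3$ bound I would expand $\partial^\alpha(u\cdot\nabla\theta)$ for $|\alpha|\le 3$ via the Leibniz rule, and estimate each product of the form $\partial^\beta u\,\partial^{\alpha-\beta}\nabla\theta$ in $L^2$ by Hölder splittings of the type $L^4\cdot L^4$ or $L^\infty\cdot L^2$, exactly in the spirit of the inequalities used in Lemma \ref{LemBoundU}. Whenever a top-order derivative needs to be shifted off $\theta$, I would exploit incompressibility $\nabla\cdot u = 0$ to rewrite the transport term as $u\cdot\nabla\theta = \nabla\cdot(u\theta)$.

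The main obstacle is the $H^3$ top-order term, i.e.\ the contribution in $\partial^\alpha(u\cdot\nabla\theta)$ with $|\alpha|=3$ and all derivatives landing on $\nabla\theta$: the naive Moser bound would bring in $\|\theta\|_{H^4}$, which is not part of the energy $\cE$. Overcoming this requires carefully exploiting the divergence-free structure of $u$ together with the smallness $\|\theta\|_{H^3}\le\delta_0$ from \eqref{AssumeSmallE}, so that every nonlinear contribution gets absorbed into $\|u\|_{H^3}$ with an implicit constant independent of higher Sobolev norms of $\theta$, thus closing the second estimate of \eqref{BoundDtThetaH2}.
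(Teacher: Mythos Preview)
Your $H^2$ argument is correct and essentially identical to the paper's: both invoke the product/algebra structure of $H^2$ in dimension three together with $\|\theta\|_{H^3}\le\delta_0$ from \eqref{AssumeSmallE}.

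For the $H^3$ bound, however, your proposed fix does not work. You correctly isolate the dangerous contribution $u\cdot\nabla\partial^\alpha\theta$ with $|\alpha|=3$, which a naive estimate sends to $\|u\|_{L^\infty}\|\theta\|_{H^4}$. But the incompressibility rewriting $u\cdot\nabla\partial^\alpha\theta=\operatorname{div}(u\,\partial^\alpha\theta)$ buys nothing when you are estimating an $L^2$ \emph{norm}: the divergence still carries a fourth derivative onto $\theta$. The divergence-free trick shifts a derivative only inside a \emph{pairing} (via integration by parts against a test function), not inside a norm. So the last paragraph of your proposal does not close the estimate with only $\|\theta\|_{H^3}\le\delta_0$ at hand.

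For comparison, the paper's own proof of the $H^3$ case makes no appeal to $\operatorname{div}u=0$; it simply writes the tame product estimate
\[
\|u\cdot\nabla\theta\|_{H^3}\lesssim \|u\|_{H^2}\|\nabla\theta\|_{H^3}+\|u\|_{H^3}\|\nabla\theta\|_{H^2}\lesssim \|u\|_{H^3},
\]
which explicitly uses $\|\nabla\theta\|_{H^3}$, i.e.\ one more derivative on $\theta$ than the stated smallness hypothesis \eqref{AssumeSmallE} controls. Thus the paper is implicitly relying on $\theta\in H^4$ being bounded (plausible for the smooth normal-mode data and the local well-posedness theory cited, but not a consequence of \eqref{AssumeSmallE} alone), rather than on any divergence-free cancellation.
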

\begin{proof}
From $\eqref{EqNSK_Pertur}_1$ and Sobolev embedding, we obtain
\begin{equation}
\|\partial_t \theta\|_{H^2} \lesssim \|u_3\|_{H^2}+ \|u\cdot\nabla\theta\|_{H^2}\lesssim   \|u \|_{H^2}(1+ \|\nabla\theta\|_{H^2})\lesssim \|u\|_{H^2}.
\end{equation}
Similarly, one has
\begin{equation}
\|\partial_t \theta\|_{H^3} \lesssim \|u_3\|_{H^3}+ \|u\cdot\nabla\theta\|_{H^3}\lesssim \|u_3\|_{H^3}+   \|u \|_{H^2}  \|\nabla \theta\|_{H^3}+ \|u\|_{H^3}\|\nabla\theta\|_{H^2}\lesssim \|u\|_{H^3}.
\end{equation}
%Let us take the derivative in time of $\eqref{EqNSK_Pertur}_1$ to have 
%\begin{equation}\label{EqDt^2Theta}
%\partial_t^2 \theta + \partial_tu \cdot\nabla(\rho_0+\theta) + u\cdot\nabla\partial_t\theta=0.
%\end{equation}
%Hence, using the interpolation inequality and \eqref{BoundDtUinH^1byE}, we have
%\begin{equation}
%\begin{split}
%\|\partial_t^2\theta\|_{H^1} &\lesssim \|\partial_t u\|_{H^1} +  \|\partial_t u\|_{H^1}\|\nabla\theta\|_{H^2} + \|\partial_tu\|_{H^2}\|\nabla\theta\|_{H^1} \\
%&\qquad+ \|u\|_{H^1}\|\nabla\partial_t\theta\|_{H^2} + \|u\|_{H^2}\|\nabla\partial_t\theta\|_{H^1} \\
%&\lesssim \|\theta\|_{H^3}+\|u\|_{H^3}.
%\end{split}
%\end{equation}
Lemma \ref{LemBoundTheta} is proven. 
\end{proof}

We now derive \textit{a priori} energy estimates for the density and velocity in Propositions \ref{PropNormU_DtU_L2}, \ref{PropNormU_DtU_H1}, \ref{PropNormU_H3_Theta_H4}.

\begin{proposition}\label{PropNormU_DtU_L2}
The following inequalities hold
\begin{equation}\label{EstNormU_L2_Theta_H1}
\begin{split}
\|u(t)\|_{L^2}^2+ \|\nabla \theta(t)\|_{L^2}^2 + \int_0^t \|\nabla u(s)\|_{L^2}^2 &\lesssim \cE^2(0) + \varepsilon \int_0^t \cE^2(s)ds+ \varepsilon^{-2}\int_0^t \|(\theta,u)(s)\|_{L^2}^2ds \\
&\qquad +\int_0^t \cE^3(s)ds.
\end{split}
\end{equation}
\begin{equation}\label{EstNormDtU_L2_DtTheta_H1}
\begin{split}
\|\partial_t u(t)\|_{L^2}^2 +\|\nabla\partial_t\theta(t)\|_{L^2}^2 + \int_0^t \|\nabla \partial_t u(s)\|_{L^2}^2 ds &\lesssim \cE^2(0) + \varepsilon \int_0^t \cE^2(s)ds \\
&\qquad+\varepsilon^{-2} \int_0^t  \|(\theta,u)(s)\|_{L^2}^2ds +\int_0^t \cE^3(s)ds.
\end{split}
\end{equation}
\end{proposition}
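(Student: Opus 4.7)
The plan is to obtain both inequalities by standard energy methods that exploit the self-adjoint-like structure of the Korteweg system. For \eqref{EstNormU_L2_Theta_H1} I would test the momentum equation $\eqref{EqNSK_Pertur}_2$ against $u$ in $L^2(\Omega)$; for \eqref{EstNormDtU_L2_DtTheta_H1} I would first differentiate $\eqref{EqNSK_Pertur}_2$ in $t$ and then test against $\partial_t u$. In both cases the pressure vanishes by incompressibility, the viscous term furnishes the dissipation on the left, and the key observation is that the capillary contribution can be converted into a positive time derivative of $\|\nabla \theta\|_{L^2}^2$ (respectively $\|\nabla\partial_t\theta\|_{L^2}^2$) by invoking the continuity equation $\eqref{EqNSK_Pertur}_1$.

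Concretely, for the first estimate I would substitute $\rho_0' u_3 = -\partial_t\theta - u\cdot\nabla\theta$ inside $\sigma\int \rho_0' u_3 \Delta\theta$ and integrate by parts in $x_3$, producing $\frac{\sigma}{2}\frac{d}{dt}\|\nabla\theta\|_{L^2}^2$ together with a cubic leftover that cancels exactly with $-\sigma\int (\nabla\theta\Delta\theta)\cdot u$ arising on the nonlinear right-hand side. The auxiliary capillary piece $\sigma\int \rho_0''\,u\cdot\nabla\theta$ simplifies by horizontal integration by parts combined with $\text{div}\,u = 0$ to the low-order quantity $-\sigma\int \rho_0'''\, u_3\theta$, which together with $-g\int \theta u_3$ is controlled by Young's inequality as $\varepsilon\|u\|_{L^2}^2 + \varepsilon^{-1}\|\theta\|_{L^2}^2$ (the first piece being reabsorbed into the viscous dissipation via Poincar\'e's inequality). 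The remaining nonlinear terms $-\int \theta\partial_t u\cdot u$ and $-\int (\rho_0+\theta)(u\cdot\nabla u)\cdot u$ are bounded directly by $\cE^3$ via \eqref{EstClassical} and Lemma \ref{LemBoundU}. Integrating in time then yields \eqref{EstNormU_L2_Theta_H1}. The second estimate follows the same pattern after the time differentiation, using $\partial_t^2\theta = -\rho_0'\partial_t u_3 - \partial_t(u\cdot\nabla\theta)$ to convert the capillary block; the interpolation inequality $\eqref{EstClassical}_3$ combined with Lemma \ref{LemBoundTheta} then accounts for the $\varepsilon^{-2}\|(\theta,u)\|_{L^2}^2$ weight on the right by redistributing derivatives between $\cE$ and the pure $L^2$ norm.

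The main obstacle is the term $\partial_t(\theta\partial_t u) = \partial_t\theta\,\partial_t u + \theta\partial_t^2 u$ appearing after the time differentiation, since the factor $\partial_t^2 u$ cannot be estimated directly. The resolution is the algebraic identity
\[
\int \theta\partial_t^2 u\cdot \partial_t u = \tfrac{1}{2}\tfrac{d}{dt}\!\int \theta|\partial_t u|^2 - \tfrac{1}{2}\!\int \partial_t\theta\,|\partial_t u|^2,
\]
whereby the first piece is absorbed into the effective energy $\frac{1}{2}\int (\rho_0+\theta)|\partial_t u|^2$ (still equivalent to $\|\partial_t u\|_{L^2}^2$ under \eqref{AssumeSmallE}) while the remainder is bounded by $\cE^3$ thanks to Lemma \ref{LemBoundTheta} and the Sobolev embedding $H^2\hookrightarrow L^\infty$. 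The other nonlinear contributions from $\partial_t((\rho_0+\theta)u\cdot\nabla u)$ and $\sigma\,\partial_t(\nabla\theta\Delta\theta)$ are each estimated as $\cE^3$ by the same suite of Sobolev embeddings \eqref{EstClassical} and Lemmas \ref{LemBoundU}--\ref{LemBoundTheta}. A careful bookkeeping of every product, splitting each bilinear expression by Young's inequality with weight $\varepsilon$ against the dissipation on the left and the low-order $L^2$ data on the right, then yields \eqref{EstNormDtU_L2_DtTheta_H1}.
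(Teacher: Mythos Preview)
Your proposal is correct and follows essentially the same strategy as the paper: test the momentum equation (respectively its time derivative) against $u$ (respectively $\partial_t u$), and convert the capillary contribution into $\tfrac{\sigma}{2}\tfrac{d}{dt}\|\nabla\theta\|_{L^2}^2$ (respectively $\tfrac{\sigma}{2}\tfrac{d}{dt}\|\nabla\partial_t\theta\|_{L^2}^2$) via the continuity equation, with the problematic $\theta\partial_t^2 u$ term absorbed into the effective energy $\tfrac12\int(\rho_0+\theta)|\partial_t u|^2$ exactly as you describe. The only organizational difference is that the paper rewrites the momentum equation in the ``total density'' form \eqref{EqNSK_Pertur_2nd} with $(\rho_0+\theta)\partial_t u$ and $\nabla(\rho_0+\theta)\Delta\theta$ from the outset, so that your cubic cancellation and your $\partial_t^2 u$ identity come for free, and it bounds $\sigma\int\rho_0''\nabla\theta\cdot u$ directly by $\|\theta\|_{H^1}\|u\|_{L^2}$ followed by interpolation rather than first integrating by parts to $-\sigma\int\rho_0''' u_3\theta$.
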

\begin{proof}
Let us compute that 
\begin{equation}\label{DtU_L2}
\frac12 \frac{d}{dt} \int (\rho_0+\theta)|u|^2 = \int (\rho_0+\theta) \partial_t u\cdot u+\frac12 \int \partial_t \theta |u|^2. 
\end{equation}
By the integration by parts, 
\begin{equation}\label{IntDtThetaU^2}
\int \partial_t \theta|u|^2 = - \int u\cdot \nabla(\rho_0+\theta) |u|^2 = \int (\rho_0+\theta) u \cdot\nabla |u|^2.
\end{equation}
Substituting \eqref{EqNSK_Pertur_2nd} and \eqref{IntDtThetaU^2} into \eqref{DtU_L2}, we obtain 
\begin{equation}\label{DtU_L2_1}
\begin{split}
\frac12 \frac{d}{dt} \int (\rho_0+\theta)|u|^2 &= -\int (\rho_0+\theta) (u\cdot \nabla u)\cdot u - \int (\nabla p-\mu\Delta u)\cdot u -\sigma \int \rho_0'' \nabla \theta \cdot u\\
&\qquad  -\sigma \int u\cdot \nabla(\rho_0+\theta) \Delta \theta -g\int \theta u_3 +\frac12\int\partial_t \theta|u|^2 \\
&= - \frac12 \int (\rho_0+\theta) (u\cdot \nabla u)\cdot u - \mu \int |\nabla u|^2 -\sigma \int \rho_0'' \nabla \theta \cdot u\\
&\qquad  -\sigma \int u\cdot \nabla(\rho_0+\theta) \Delta \theta -g\int \theta u_3
\end{split}
\end{equation}
Note also that, due to $\eqref{EqNSK_Pertur}_1$,  
\[
-\sigma \int u\cdot \nabla(\rho_0+\theta) \Delta \theta = \sigma \int \partial_t \theta \Delta \theta = -\frac{\sigma}2 \frac{d}{dt} \int|\nabla \theta|^2.
\]
Hence, it follows from \eqref{DtU_L2_1} that 
\begin{equation}\label{DtU_L2_2}
\begin{split}
\frac12 \frac{d}{dt} \int( (\rho_0+\theta)|u|^2 +\sigma|\nabla \theta|^2) + \mu\int|\nabla u|^2& =  -\sigma \int\rho_0'' \nabla \theta\cdot u -g\int \theta u_3.
\end{split}
\end{equation}
We estimate the r.h.s of \eqref{DtU_L2_2}.
% Owing to Sobolev embedding, 
%\[
%\begin{split}
%\int (\rho_0+\theta) (u\cdot \nabla u)\cdot u &\lesssim \|\rho_0 +\theta\|_{H^2}\|\nabla u\|_{H^2} \|u\|_{L^2}^2 \lesssim (1+\|\theta\|_{H^2}) \|\nabla u\|_{H^2} \|u\|_{L^2}^2 \lesssim \|u\|_{H^3}^2 
%\end{split}
%\]
Using the interpolation inequality $\eqref{EstClassical}_3$ and Young's inequality yields
%\begin{equation}
%\int (\rho_0+\theta) (u\cdot \nabla u)\cdot u \lesssim \varepsilon \|u\|_{H^4}^2 + \varepsilon^{-3}\|u\|_{L^2}^2
%\end{equation}
%and 
\[
\int \rho_0'' \nabla\theta\cdot u \lesssim \|\theta\|_{H^1}\|u\|_{L^2} \lesssim (\varepsilon \|\theta\|_{H^2} +\varepsilon^{-2}\|\theta\|_{L^2}) \|u\|_{L^2} \lesssim \varepsilon \cE^2 + \varepsilon^{-1} \|(\theta, u)\|_{L^2}^2.
\]
That implies
\[
\frac{d}{dt} \int( (\rho_0+\theta)|u|^2 +|\nabla \theta|^2) + \int|\nabla u|^2  \lesssim \varepsilon \cE^2 + \varepsilon^{-2} \|(\theta, u)\|_{L^2}^2 +\cE^3.
\]
Integrating the resulting inequality in time from $0$ to $t$ and noticing that $\inf_\Omega (\rho_0+\theta) >0$, we deduce \eqref{EstNormU_L2_Theta_H1}.

We now prove \eqref{EstNormDtU_L2_DtTheta_H1}. Let us take the derivative in time to \eqref{EqNSK_Pertur_2nd} to get 
\begin{equation}\label{EqPerturDt^2U}
\begin{split}
&(\rho_0+\theta)\partial_t^2 u+ \partial_t\theta \partial_t u + (\rho_0+\theta)( \partial_t u\cdot\nabla u +u\cdot\nabla\partial_t u) + \partial_t\theta u\cdot\nabla u \\
&\quad+ \nabla \partial_t q - \mu\Delta \partial_t u +\sigma \nabla(\rho_0+\theta)\Delta \partial_t\theta + \sigma \nabla\partial_t\theta \Delta\theta + \sigma \rho_0''\nabla\partial_t\theta +g\partial_t\theta e_3=0.
\end{split}
\end{equation}
Multiplying both sides of \eqref{EqPerturDt^2U} by $\partial_t u$ and integrating over $\Omega$, one has
\begin{equation}
\begin{split}
&\int   (\rho_0+\theta)\partial_t^2 u \cdot \partial_t u  +\int(\nabla \partial_t q-\mu\Delta \partial_t u) \cdot\partial_t u \\
&= -\int \partial_t\theta |\partial_t u|^2-\int (\rho_0+\theta)( \partial_t u\cdot\nabla u +u\cdot\nabla\partial_t u) \cdot\partial_t u -\int \partial_t\theta (u\cdot\nabla u)\cdot\partial_t u \\
&\qquad-\sigma \int (\nabla(\rho_0+\theta)\Delta\partial_t\theta) \cdot \partial_t u -\sigma \int \Delta\theta \nabla\partial_t\theta\cdot\partial_t u-\sigma \int \rho_0''\nabla\partial_t\theta \cdot\partial_t u  -g\int\partial_t\theta \partial_t u_3.
\end{split}
\end{equation}
That is equivalent to 
\begin{equation}\label{1_EqDtIntDtU_L2}
\begin{split}
& \frac12 \frac{d}{dt}\int (\rho_0+\theta)| \partial_t u|^2  +\int(\nabla \partial_t q-\mu\Delta \partial_t u) \cdot\partial_t u \\
&= \frac12 \int \partial_t\theta |\partial_t u|^2-\int (\rho_0+\theta)( \partial_t u\cdot\nabla u +u\cdot\nabla\partial_t u) \cdot\partial_t u -\int \partial_t\theta (u\cdot\nabla u)\cdot\partial_t u \\
&\qquad-\sigma \int (\nabla(\rho_0+\theta)\Delta\partial_t\theta) \cdot \partial_t u -\sigma \int \Delta\theta \nabla\partial_t\theta\cdot\partial_t u-\sigma \int \rho_0''\nabla\partial_t\theta \cdot\partial_t u  -g\int\partial_t\theta \partial_t u_3.
\end{split}
\end{equation}
Note that 
\begin{equation}\label{Int2ndDeltaDtTheta}
 \begin{split}
- \int (\nabla(\rho_0+\theta)\Delta\partial_t\theta) \cdot \partial_t u  &= \int \Delta\partial_t\theta ( \partial_t^2\theta + u\cdot\nabla\partial_t\theta) \\
& = - \frac12 \frac{d}{dt} \int|\nabla\partial_t\theta|^2  + \int \Delta\partial_t\theta (u\cdot\nabla\partial_t\theta).
\end{split}
\end{equation}
By the integration by parts, 
\begin{equation}\label{IntDtThetaD_tU^2}
\int \partial_t \theta|\partial_t u|^2 = - \int u\cdot \nabla(\rho_0+\theta) |\partial_t u|^2 = \int (\rho_0+\theta) u \cdot\nabla |\partial_t u|^2.
\end{equation}
Substituting \eqref{Int2ndDeltaDtTheta} and \eqref{IntDtThetaD_tU^2} into \eqref{1_EqDtIntDtU_L2},  it yields
\begin{equation}\label{2_EqDtIntDtU_L2}
\begin{split}
& \frac12 \frac{d}{dt}\int ( (\rho_0+\theta)| \partial_t u|^2 +\sigma |\nabla\partial_t\theta|^2) +\frac\mu2 \int |\nabla \partial_t u|^2 \\
&=-\int (\rho_0+\theta)( \partial_t u\cdot\nabla u ) \cdot\partial_t u -\int \partial_t\theta (u\cdot\nabla u)\cdot\partial_t u -\sigma \int \Delta\partial_t\theta (u\cdot\nabla\partial_t\theta)\\
&\qquad -\sigma \int \Delta\theta \nabla\partial_t\theta\cdot\partial_t u-\sigma \int \rho_0''\nabla\theta \cdot\partial_t u  -g\int\theta \partial_t u_3.
\end{split}
\end{equation}
Let us estimate the r.h.s of \eqref{2_EqDtIntDtU_L2} by using Sobolev embedding and \eqref{EstClassical}. 
We have
\begin{equation}\label{3_EqDtIntDtU_L2}
\begin{split}
\int (\rho_0+\theta)( \partial_t u\cdot\nabla u ) \cdot\partial_t u &\lesssim \|\rho_0+\theta\|_{H^2}\|\nabla u\|_{H^2} \|\partial_t u\|_{L^2}^2 \\
&\lesssim (1+\|\theta\|_{H^2}) \|u\|_{H^3} \|\partial_t u\|_{L^2}^2,
\end{split}
\end{equation}
and 
\begin{equation}\label{7_EqDtIntDtU_L2}
\sigma \int \rho_0''\nabla\theta \cdot\partial_t u  +g\int\theta \partial_t u_3\lesssim \|\theta\|_{H^1}\|\partial_tu\|_{L^2}.
\end{equation}
Using Lemma \ref{LemBoundTheta} also, we obtain
\begin{equation}\label{4_EqDtIntDtU_L2}
\int \partial_t\theta (u\cdot\nabla u)\cdot\partial_t u \lesssim \|\partial_t\theta\|_{H^2}\|u\|_{H^2}\|\nabla u\|_{L^2}\|\partial_t u\|_{L^2} \lesssim \|u\|_{H^2}^3 \|\partial_t u\|_{L^2},
\end{equation}
and
\begin{equation}\label{5_EqDtIntDtU_L2}
\int \Delta\partial_t\theta (u\cdot\nabla\partial_t\theta) \lesssim \|\Delta \partial_t\theta\|_{L^2} \|u\|_{L^4}\|\nabla\partial_t\theta\|_{L^4} \lesssim  \|\partial_t \theta\|_{H^2}^2 \|u\|_{H^1}  \lesssim \|u\|_{H^2}^3,  
\end{equation}
\begin{equation}\label{6_EqDtIntDtU_L2}
 \int \Delta\theta \nabla\partial_t\theta\cdot\partial_t u \lesssim \|\Delta\theta\|_{L^4}\|\nabla\partial_t\theta\|_{L^4} \|\partial_t u\|_{L^2} \lesssim \|\theta\|_{H^3}\|\partial_t\theta\|_{H^2}\|\partial_tu\|_{L^2}.
\end{equation}
In view of \eqref{BoundDtUinH^1byE} and those above estimates \eqref{3_EqDtIntDtU_L2}, \eqref{7_EqDtIntDtU_L2}, \eqref{4_EqDtIntDtU_L2}, \eqref{5_EqDtIntDtU_L2} and \eqref{6_EqDtIntDtU_L2}, we have 
\begin{equation}
\begin{split}
 \frac{d}{dt}\int ( (\rho_0+\theta)| \partial_t u|^2 + |\nabla\partial_t\theta|^2) +\int |\nabla \partial_t u|^2 &\lesssim \|\theta\|_{H^1} \|\partial_t u\|_{L^2} + \cE^3  \\
%&\lesssim \|\theta\|_{H^1}(\|\theta\|_{H^3}+\|u\|_{H^2}) +\cE^3.
&\lesssim  \|\theta\|_{H^2}^2+\|u\|_{H^2}^2 +\cE^3\\
&\lesssim \varepsilon \cE^2 + \varepsilon^{-2} \|(\theta, u)\|_{L^2}^2 +\cE^3.
\end{split}
\end{equation}
Integrating the resulting inequality in time from $0$ to $t$, we deduce
\[
\begin{split}
\int ( (\rho_0+\theta)| \partial_t u|^2 + |\nabla\partial_t\theta|^2)(t) +\mu \int_0^t \|\nabla\partial_tu(s)\|_{L^2}^2 &\lesssim \int ( (\rho_0+\theta)| \partial_t u|^2 + |\nabla\partial_t\theta|^2)(0) \\
&\qquad +\int_0^t ( \cE^2 + \varepsilon^{-2} \|(\theta, u)\|_{L^2}^2 +\cE^3)(s)ds.
\end{split}
\]
This yields
\[\begin{split}
\|\partial_tu(t)\|_{L^2}^2 +\|\nabla\partial_t\theta(t)\|_{L^2}^2 + \int_0^t \|\nabla\partial_tu(s)\|_{L^2}^2  &\lesssim \|\partial_t u(0)\|_{L^2}^2+\|\partial_t\theta(0)\|_{H^1}^2\\
&\qquad+\int_0^t ( \cE^2 + \varepsilon^{-2} \|(\theta, u)\|_{L^2}^2 +\cE^3)(s)ds.
\end{split}\]
Together with \eqref{BoundDtUinH^1byE} and \eqref{BoundDtThetaH2}, we get \eqref{EstNormDtU_L2_DtTheta_H1}.  Proposition \ref{PropNormU_DtU_L2} is proven. 
\end{proof}

\begin{proposition}\label{PropNormU_DtU_H1}
The following inequalities hold
\begin{equation}\label{EstIntDtU_L2}
\|\nabla u(t)\|_{L^2}^2 + \int_0^t \|\partial_t u(s)\|_{L^2}^2ds \lesssim \cE^2(0)+ \int_0^t (\varepsilon \|\theta(s)\|_{H^3}^2 + \varepsilon^{-2}\|\theta(s)\|_{L^2}^2 +\cE^4(s))ds,
\end{equation}
\begin{equation}\label{EstIntDt^2U_L2}
\|\nabla \partial_tu(t)\|_{L^2}^2 + \int_0^t \|\partial_t^2 u(s)\|_{L^2}^2ds \lesssim  \cE^2(0)+ \int_0^t (\varepsilon \|u(s)\|_{H^3}^2 + \varepsilon^{-2}\|u(s)\|_{L^2}^2 +\cE^4(s))ds
\end{equation}
%\begin{equation}\label{EstIntDtU_H2}
%\|\nabla u(t)\|_{H^2}^2 +\int_0^t \|\partial_t u(s)\|_{H^2}^2 ds \lesssim 
%\end{equation}
\end{proposition}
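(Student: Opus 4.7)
Both estimates are higher-order energy inequalities: I would test the momentum equation \eqref{EqNSK_Pertur_2nd} with $\partial_t u$ for \eqref{EstIntDtU_L2} and the time-differentiated equation \eqref{EqPerturDt^2U} with $\partial_t^2 u$ for \eqref{EstIntDt^2U_L2}. The viscous terms produce the desired dissipation structure $\frac{\mu}{2}\frac{d}{dt}\|\nabla u\|_{L^2}^2$ and $\frac{\mu}{2}\frac{d}{dt}\|\nabla\partial_t u\|_{L^2}^2$, together with coercive terms $\int(\rho_0+\theta)|\partial_t u|^2$ and $\int(\rho_0+\theta)|\partial_t^2 u|^2$ on the left, where the lower bound $\inf_\Omega(\rho_0+\theta)>0$ follows from \eqref{AssumeSmallE}. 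The pressure terms vanish thanks to $\text{div}\,\partial_t u=\text{div}\,\partial_t^2 u=0$, and the integration by parts on the viscous term is justified by $u|_{\partial\Omega}=0$.

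For \eqref{EstIntDtU_L2}, the remaining forcing on the right consists of $\int(\rho_0+\theta)(u\cdot\nabla u)\cdot\partial_t u$, $\sigma\int\nabla(\rho_0+\theta)\Delta\theta\cdot\partial_t u$, $\sigma\int\rho_0''\nabla\theta\cdot\partial_t u$, and $g\int\theta\,\partial_t u_3$. I would estimate each via Cauchy--Schwarz, Sobolev embedding, and Young's inequality: the linear pieces generate terms of the shape $\nu\|\partial_t u\|_{L^2}^2+\nu^{-1}\|\theta\|_{H^2}^2$, and the crucial step is the interpolation $\eqref{EstClassical}_3$ applied with $j=2$ and $\nu=\sqrt{\varepsilon}$, giving $\|\theta\|_{H^2}^2\lesssim\varepsilon\|\theta\|_{H^3}^2+\varepsilon^{-2}\|\theta\|_{L^2}^2$, which is exactly the form claimed. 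The nonlinear pieces produce $\cE^3\|\partial_t u\|_{L^2}$ and, after Young, $\cE^4$-contributions. Integrating in time from $0$ to $t$ and absorbing half of $\|\partial_t u\|_{L^2}^2$ on the left yields \eqref{EstIntDtU_L2}.

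For \eqref{EstIntDt^2U_L2}, the computation is analogous. The new feature is a plethora of $\partial_t\theta$-coefficients in \eqref{EqPerturDt^2U}, which I would control with Lemma \ref{LemBoundTheta}: the bounds $\|\partial_t\theta\|_{H^k}\lesssim\|u\|_{H^k}$ for $k=2,3$ reduce the leading linear forcing to $\|u\|_{H^2}\|\partial_t^2 u\|_{L^2}$, and the same interpolation idea, now yielding $\|u\|_{H^2}^2\lesssim\varepsilon\|u\|_{H^3}^2+\varepsilon^{-2}\|u\|_{L^2}^2$, produces the claimed right-hand side. Nonlinear contributions once more collapse into $\cE^4$ after Sobolev embeddings $\|v\|_{L^\infty}\lesssim\|v\|_{H^2}$ and $\|v\|_{L^4}\lesssim\|v\|_{H^1}$ from \eqref{EstClassical}.

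The main obstacle I anticipate is the Korteweg coupling $\sigma\nabla(\rho_0+\theta)\Delta\theta$ (and its companion $\sigma\nabla(\rho_0+\theta)\Delta\partial_t\theta$ in the second estimate). Unlike Proposition \ref{PropNormU_DtU_L2}, where the identity $\nabla(\rho_0+\theta)\cdot u=-\partial_t\theta$ coming from $\eqref{EqNSK_Pertur}_1$ permitted the clean cancellation \eqref{Int2ndDeltaDtTheta} and produced a free $\frac{d}{dt}\|\nabla\partial_t\theta\|_{L^2}^2$-dissipation, the testing functions $\partial_t u$ and $\partial_t^2 u$ here are not paired with $\nabla(\rho_0+\theta)$ in such a favourable way. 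Consequently one must pay an extra derivative on $\theta$ (an $H^2$-norm), and the interpolation inequality then splits the result into the $\varepsilon$-balanced form required by the statement. Verifying that no genuinely top-order term $\|\theta\|_{H^3}^2$ or $\|u\|_{H^3}^2$ survives without an $\varepsilon$-prefactor is the only delicate bookkeeping point of the argument.
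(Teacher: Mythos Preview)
Your proposal is correct and follows essentially the same route as the paper: test \eqref{EqNSK_Pertur_2nd} with $\partial_t u$ and \eqref{EqPerturDt^2U} with $\partial_t^2 u$, use the dissipation structure $\int(\rho_0+\theta)|\partial_t^k u|^2+\frac{\mu}{2}\frac{d}{dt}\|\nabla\partial_t^{k-1}u\|_{L^2}^2$, bound the forcing by $(\|\theta\|_{H^2}+\cE^2)\|\partial_t u\|_{L^2}$ respectively $(\|u\|_{H^2}+\cE^2)\|\partial_t^2 u\|_{L^2}$, and finish with Young and the interpolation $\eqref{EstClassical}_3$ at $j=2$. Your remark on the Korteweg term is also on point: here it is simply estimated by $(1+\|\theta\|_{H^3})\|\theta\|_{H^2}\|\partial_t u\|_{L^2}$ (respectively $\|\partial_t\theta\|_{H^2}(1+\|\theta\|_{H^1})\|\partial_t^2 u\|_{L^2}$ via Lemma~\ref{LemBoundTheta}) rather than converted into a time derivative, and the only small point you did not mention is that the initial term $\|\nabla\partial_t u(0)\|_{L^2}^2$ in \eqref{EstIntDt^2U_L2} is absorbed into $\cE^2(0)$ via \eqref{BoundDtUinH^1byE}.
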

\begin{proof}
%Let $\alpha\in \N^3$, $|\alpha|\leq 2$.  Applying the operator $\partial^\alpha $ to Eq. \eqref{EqNSK_Pertur_2nd} and multiplying the resulting equation by $\partial^\alpha \partial_t u$, we obtain
%\[
%\begin{split}
%&\int  \partial^\alpha ((\rho_0+\theta)\partial_t u)\cdot \partial^\alpha \partial_tu  + \int (\nabla \partial^\alpha p-\mu\Delta\partial^\alpha u)\cdot \partial^\alpha\partial_t u \\
%&= -\int \partial^\alpha ((\rho_0+\theta)u\cdot\nabla u) \cdot\partial^\alpha\partial_t u -\sigma \int \partial^\alpha (\nabla(\rho_0+\theta)\Delta\theta) \cdot \partial^\alpha\partial_t u -\sigma \int \partial^\alpha(\rho_0''\nabla\theta) \cdot\partial^\alpha\partial_t u \\
%&\qquad -g\int \partial^\alpha\theta \partial^\alpha u_3.
%\end{split}
%\]
%Integrating by parts implies
%\begin{equation}\label{EqD^alphaDtNablaU}
%\begin{split}
%&\int  \partial^\alpha ((\rho_0+\theta)\partial_t u)\cdot \partial^\alpha \partial_tu  + \frac\mu2 \frac{d}{dt} \int |\nabla \partial^\alpha u|^2   \\
%&= -\int \partial^\alpha ((\rho_0+\theta)u\cdot\nabla u) \cdot\partial^\alpha\partial_t u -\sigma \int \partial^\alpha (\nabla(\rho_0+\theta)\Delta\theta) \cdot \partial^\alpha\partial_t u \\
%&\qquad-\sigma \int \partial^\alpha(\rho_0''\nabla\theta) \cdot\partial^\alpha\partial_t u  -g\int \partial^\alpha\theta \partial^\alpha u_3.
%\end{split}
%\end{equation}

Let us prove \eqref{EstIntDtU_L2} first.  We multiply both sides of $\eqref{EqNSK_Pertur}_2$ by $\partial_t u$ and integrate to have that
\begin{equation}\label{EqD^alphaDtNablaU}
\begin{split}
&\int   (\rho_0+\theta)\partial_t u \cdot \partial_tu  +\int(\nabla q-\mu\Delta u) \cdot\partial_t u \\
&= -\int  (\rho_0+\theta)(u\cdot\nabla u) \cdot \partial_t u -\sigma \int (\nabla(\rho_0+\theta)\Delta\theta) \cdot \partial_t u -\sigma \int \rho_0''\nabla\theta \cdot\partial_t u  -g\int\theta \partial_t u_3.
\end{split}
\end{equation}
Hence, using the integration by parts, 
\begin{equation}\label{EqDtUIntNablaU}
\begin{split}
\int (\rho_0+\theta)|\partial_tu|^2  + \frac\mu2 \frac{d}{dt} \int |\nabla  u|^2   &= -\int  ((\rho_0+\theta)u\cdot\nabla u) \cdot \partial_t u -\sigma \int (\nabla(\rho_0+\theta) \cdot \partial_t u)\Delta\theta\\
&\qquad -\sigma \int \rho_0''\nabla\theta\cdot\partial_t u  -g\int \theta\partial_t  u_3.
\end{split}
\end{equation}
We bound each integral in the r.h.s of  \eqref{EqDtUIntNablaU}. Keep using Sobolev embedding, we have
\begin{equation}\label{1_EstIntDtU_L2}
\begin{split}
\int  ((\rho_0+\theta)u\cdot\nabla u) \cdot \partial_t u  &\lesssim \| (\rho_0+\theta)u\cdot \nabla u\|_{L^2}\|\partial_t u\|_{L^2}  \\
%&\lesssim \|\rho_0+\theta\|_{H^2}\|u\|_{H^2}\|\nabla u\|_{L^2}\|\partial_t u\|_{L^2} \\
&\lesssim (1+\|\theta\|_{H^2})\|u\|_{H^2}^2 \|\partial_t u\|_{L^2}.
\end{split}
\end{equation}
For the second integral, we observe 
\begin{equation}\label{2_EstIntDtU_L2}
\begin{split}
\int (\nabla(\rho_0+\theta) \cdot \partial_t u)\Delta\theta &\lesssim \|\nabla(\rho_0+\theta)\|_{H^2}\|\Delta\theta\|_{L^2} \|\partial_t u\|_{L^2} \\
&\lesssim (1+\|\theta\|_{H^3})\|\theta\|_{H^2} \|\partial_t u\|_{L^2}\\
%&\lesssim \|\theta\|_{H^2} \|\partial_t u\|_{L^2}.
\end{split}
\end{equation}
%Using Young's inequality again, we have that
%\[\begin{split}
%\int (\nabla(\rho_0+\theta) \cdot \partial_t u)\Delta\theta \lesssim \nu \|\partial_t u\|_{L^2}^2+\nu^{-1} \|\theta\|_{H^2}^2.
%\end{split}\]
We also have
\begin{equation}\label{3_EstIntDtU_L2}
\begin{split}
\int \rho_0''\nabla\theta \cdot\partial_t u +\int\theta \partial_t u_3 &\lesssim \|\theta\|_{H^1}\|\partial_t u\|_{L^2}.
\end{split}
\end{equation}
For any $\nu>0$, it follows from \eqref{1_EstIntDtU_L2}, \eqref{2_EstIntDtU_L2}, \eqref{3_EstIntDtU_L2}   and Young's inequality that
\begin{equation}\label{EqDtNablaU}
\begin{split}
\frac12 \min_I \rho_0 \int |\partial_tu|^2  + \frac\mu2 \frac{d}{dt} \int |\nabla  u|^2   &\lesssim \nu \|\partial_t u\|_{L^2}^2 + \nu^{-1}(\|\theta\|_{H^2}^2 +\cE^4).
\end{split}
\end{equation}
We choose  sufficiently small $\nu$ and use $\eqref{EstClassical}_3$ to obtain 
\[
\|\partial_tu\|_{L^2}^2  + \frac{d}{dt}\|\nabla  u\|_{L^2}^2   \lesssim \|\theta\|_{H^2}^2 +\cE^4 \lesssim \varepsilon \|\theta\|_{H^3}^2+ \varepsilon^{-1}\|\theta\|_{L^2}^2 + \cE^4.
\]
Integrating in time from 0 to $t$, the inequality \eqref{EstIntDtU_L2} follows.

Now we prove \eqref{EstIntDt^2U_L2}. Multiplying by $\partial_t^2 u$ to both sides of \eqref{EqPerturDt^2U} and integrating over $\Omega$ by parts, it yields
\begin{equation}\label{EstIntDt^2U_L2_1}
\begin{split}
&\int (\rho_0+\theta)|\partial_t^2 u|^2 + \frac{\mu}2 \frac{d}{dt}\int |\nabla\partial_t u|^2\\
&= -\int \partial_t\theta \partial_t u \cdot\partial_t^2 u +\int (\rho_0+\theta) (\partial_t u\cdot\nabla u+ u\cdot\nabla \partial_tu)\partial_t^2 u - \int \partial_t \theta(u\cdot\nabla u)\cdot\partial_t ^2 u \\
&\qquad - \sigma \int \Delta\partial_t\theta  \nabla(\rho_0+\theta)\cdot \partial_t^2 u - \sigma \int \Delta\theta \nabla\partial_t\theta \cdot\partial_t^2 u -\sigma \int\rho_0'' \nabla\partial_t\theta \cdot\partial_t^2 u -g\int \partial_t\theta\partial_t^2 u_3.
\end{split}
\end{equation}
We now estimate each integral in the r.h.s of \eqref{EstIntDt^2U_L2_1} by using the interpolation inequality. Thanks to \eqref{BoundDtUinH^1byE} and \eqref{BoundDtThetaH2}, we have that
\begin{equation}\label{1_EstIntDt^2U_L2_1}
\begin{split}
\int \partial_t\theta \partial_t u \cdot\partial_t^2 u 
%&\lesssim \|\partial_t\theta\partial_tu\|_{L^2} \|\partial_t^2 u\|_{L^2} 
\lesssim \|\partial_t\theta\|_{H^2} \|\partial_t u\|_{L^2} \|\partial_t^2 u\|_{L^2} \lesssim \|u\|_{H^2}\|(\theta,u)\|_{H^2} \|\partial_t^2 u\|_{L^2},
\end{split}
\end{equation}
that
\begin{equation}\label{2_EstIntDt^2U_L2_1}
\begin{split}
\int (\rho_0+\theta) (\partial_t u\cdot\nabla u+ u\cdot\nabla \partial_tu)\partial_t^2 u &\lesssim \|\rho_0+\theta\|_{H^2} (\|\partial_t u\|_{L^2} \|\nabla u\|_{H^2}+ \|u\|_{H^2}\|\nabla \partial_t u\|_{L^2}) \|\partial_t^2 u\|_{L^2} \\
&\lesssim (1+\|\theta\|_{H^2}) \|(\theta, u)\|_{H^3} \|u\|_{H^3}\|\partial_t^2 u\|_{L^2},
\end{split} 
\end{equation}
that
\begin{equation}\label{3_EstIntDt^2U_L2_1}
\int \partial_t \theta(u\cdot\nabla u)\cdot\partial_t ^2 u \lesssim \|\partial_t\theta\|_{H^2} \|u\|_{H^2}\|\nabla u\|_{L^2} \|\partial_t^2 u\|_{L^2} \lesssim \|u\|_{H^2}^3 \|\partial_t^2 u\|_{L^2},
\end{equation}
that
\begin{equation}\label{4_EstIntDt^2U_L2_1}
 \int \Delta\partial_t\theta  \nabla(\rho_0+\theta)\cdot \partial_t^2 u \lesssim \|\partial_t\theta\|_{H^2} \|\rho_0+\theta\|_{H^1}\|\partial_t^2 u\|_{L^2} \lesssim \|u\|_{H^2}(1+\|\theta\|_{H^1})\|\partial_t^2 u\|_{L^2},
\end{equation}
and that
\begin{equation}\label{5_EstIntDt^2U_L2_1}
\sigma \int\rho_0'' \nabla\partial_t\theta \cdot\partial_t^2 u +g\int \partial_t\theta\partial_t^2 u_3 \lesssim \|\partial_t\theta\|_{H^1} \|\partial_t^2 u\|_{L^2} \lesssim \|u\|_{H^2} \|\partial_t^2 u\|_{L^2}.
\end{equation}
Using $\eqref{EstClassical}_1$ also, 
\begin{equation}\label{6_EstIntDt^2U_L2_1}
\int \Delta\theta \nabla\partial_t\theta \cdot\partial_t^2 u \lesssim \|\Delta \theta\|_{L^4} \|\partial_t\theta\|_{L^4}\|\partial_t^2 u\|_{L^2} \lesssim \|\theta\|_{H^3} \|u\|_{H^2} \|\partial_t^2 u\|_{L^2}.
\end{equation}
Combining those above estimates \eqref{1_EstIntDt^2U_L2_1},  \eqref{2_EstIntDt^2U_L2_1},  \eqref{3_EstIntDt^2U_L2_1},  \eqref{4_EstIntDt^2U_L2_1},  \eqref{5_EstIntDt^2U_L2_1} and  \eqref{6_EstIntDt^2U_L2_1}, we get
\begin{equation}
\begin{split}
\int (\rho_0+\theta)|\partial_t^2 u|^2 + \frac{\mu}2 \frac{d}{dt}\int |\nabla\partial_t u|^2 &\lesssim (\|u\|_{H^2}+\cE^2) \|\partial_t^2 u\|_{L^2} \\
&\lesssim \nu \|\partial_t^2 u\|_{L^2}^2 +\nu^{-1}( \|u\|_{H^2}^2 +\cE^4).
\end{split}
\end{equation}
We choose $\nu>0$  sufficiently small and use $\eqref{EstClassical}_3$ to obtain 
\[
\|\partial_t^2 u\|_{L^2}^2 + \frac{d}{dt} \|\nabla\partial_t u\|_{L^2}^2  \lesssim \|u\|_{H^2}^2 +\cE^4 \lesssim \varepsilon\| u\|_{H^3}^2 +\varepsilon^{-2} \|u\|_{L^2}^2 + \cE^4.
\]
Integrating in time from 0 to $t$ the resulting inequality, we obtain. 
\begin{equation}\label{7_EstIntDt^2U_L2_1}
\begin{split}
\|\nabla\partial_t u(t)\|_{L^2}^2 +\int_0^t \|\partial_t^2 u(s)\|_{L^2}^2 ds &\lesssim \|\nabla\partial_t u(0)\|_{L^2}^2+   \int_0^t (\varepsilon\| u(s)\|_{H^3}^2 +\varepsilon^{-2} \|u(s)\|_{L^2}^2+ \cE^4(s))ds.
\end{split}
\end{equation}
The inequality \eqref{EstIntDt^2U_L2} thus follows from  \eqref{7_EstIntDt^2U_L2_1} and \eqref{BoundDtUinH^1byE}. The proof of Proposition \ref{PropNormU_DtU_H1} is complete.
\end{proof}

Let us continue with $H^2$-norm of the velocity. 

\begin{proposition}\label{PropNormU_H3_Theta_H4}
There holds 
\begin{equation}\label{EstNormU_H3_Theta_H4}
\begin{split}
\|u(t)\|_{H^2}^2 +\|\nabla\theta(t)\|_{H^2}^2  + \int_0^t \|\nabla u(s)\|_{H^2}^2 ds  &\lesssim \cE^2(0)+ \varepsilon \int_0^t \cE^2(s) ds + \varepsilon^{-2}\int_0^t \|(\theta,u)(s)\|_{L^2}^2 ds \\
&\qquad+ \int_0^t\cE^3(s)ds.
\end{split}
\end{equation}
\end{proposition}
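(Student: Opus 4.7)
The plan is to combine classical Stokes regularity for the velocity with an $L^2$ energy estimate for $\nabla^3\theta$ derived from the continuity equation. Viewing the momentum balance $\eqref{EqNSK_Pertur}_2$ as a forced Stokes system converts control of $\|\partial_t u\|_{H^k}$ into two extra orders of spatial regularity for $u$, while $\eqref{EqNSK_Pertur}_1$ recovers the top-order density norm $\|\nabla^3\theta\|_{L^2}$ via time integration of $\|u\|_{H^3}$. The main difficulty will be the circular coupling between these two top-order norms.

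First, I would rewrite $\eqref{EqNSK_Pertur}_2$ as $-\mu\Delta u + \nabla q = F$ with $\text{div}\,u = 0$ and $u|_{\partial\Omega} = 0$, where $F$ collects $-\rho_0\partial_t u$, the $\theta$-forcings $-\sigma\rho_0'\Delta\theta\, e_3 - \sigma\rho_0''\nabla\theta - g\theta e_3$, and the nonlinear terms $-\theta\partial_t u - (\rho_0+\theta)u\cdot\nabla u - \sigma\nabla\theta\,\Delta\theta$. Classical Stokes regularity on the slab $\Omega$ then yields, for $k=0,1$,
\[
\|u\|_{H^{k+2}}^2 + \|\nabla q\|_{H^k}^2 \lesssim \|F\|_{H^k}^2 \lesssim \|\partial_t u\|_{H^k}^2 + \|\theta\|_{H^{k+2}}^2 + \cE^4,
\]
with the nonlinear contributions to $\|F\|_{H^k}$ handled by Sobolev calculus and \eqref{AssumeSmallE}, which also gives $\cE^4 \leq \delta_0\cE^3$.

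Next, applying $\partial^\alpha$ with $|\alpha|=3$ to $\eqref{EqNSK_Pertur}_1$ and testing against $\partial^\alpha\theta$, the top-order transport contribution $\int u\cdot\nabla\partial^\alpha\theta\,\partial^\alpha\theta$ vanishes thanks to $\text{div}\,u = 0$ and $u|_{\partial\Omega}=0$, so a Kato--Ponce commutator estimate on the remaining terms yields
\[
\tfrac{d}{dt}\|\nabla^3\theta\|_{L^2}^2 \lesssim \|u\|_{H^3}\|\theta\|_{H^3} + \cE^3 \lesssim \varepsilon\|\theta\|_{H^3}^2 + \varepsilon^{-1}\|u\|_{H^3}^2 + \cE^3.
\]
Time integration bounds $\|\nabla^3\theta(t)\|_{L^2}^2$ in terms of $\cE^2(0)$, $\varepsilon\int_0^t\cE^2$, $\varepsilon^{-1}\int_0^t\|u\|_{H^3}^2$ and $\int_0^t\cE^3$, from which $\|\nabla\theta(t)\|_{H^2}^2$ follows after adding the lower-order piece $\|\nabla\theta(t)\|_{L^2}^2$ already controlled by Proposition \ref{PropNormU_DtU_L2}.

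To close the argument, I would bound the instantaneous $\|u(t)\|_{H^2}^2$ by combining the $k=0$ Stokes estimate with the interpolation $\|\theta\|_{H^2}^2 \lesssim \varepsilon\|\theta\|_{H^3}^2 + \varepsilon^{-2}\|\theta\|_{L^2}^2$ (from $\eqref{EstClassical}_3$ at $j=2$) and Proposition \ref{PropNormU_DtU_L2} for $\|\partial_t u(t)\|_{L^2}^2$; integrating the $k=1$ Stokes estimate in time together with Propositions \ref{PropNormU_DtU_L2}--\ref{PropNormU_DtU_H1} controls $\int_0^t\|u\|_{H^3}^2$ in terms of $\int_0^t\|\theta\|_{H^3}^2$ and previously bounded quantities. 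Summing these with the integrated $\nabla^3\theta$ estimate and absorbing the $\varepsilon$-small cross-terms in $\|u\|_{H^3}^2$ and $\|\theta\|_{H^3}^2$ into the left-hand side produces \eqref{EstNormU_H3_Theta_H4}. The hard part is precisely this absorption: the feedback between $\|u\|_{H^3}^2$ (bounded via Stokes by $\|\theta\|_{H^3}^2$) and $\|\theta\|_{H^3}^2$ (bounded via the continuity equation by $\|u\|_{H^3}^2$) can be broken only because the cross-terms carry $\varepsilon$-small prefactors, so that the resulting linear system is diagonally dominant for $\varepsilon$ and $\delta_0$ small enough.
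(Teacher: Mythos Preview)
Your closure does not go through, and the obstruction is precisely the capillary term you put into the Stokes force. In the $k=1$ Stokes estimate the linear contribution $\sigma\rho_0'\Delta\theta\,e_3$ produces $\|\theta\|_{H^3}^2$ in $\|F\|_{H^1}^2$ with an $O(1)$ coefficient, so after time integration you get
\[
\int_0^t \|u(s)\|_{H^3}^2\,ds \ \lesssim\ \int_0^t \|\partial_t u(s)\|_{H^1}^2\,ds \;+\; C\int_0^t \|\theta(s)\|_{H^3}^2\,ds \;+\; \text{good terms},
\]
with $C=O(1)$, not $O(\varepsilon)$. Feeding this into your continuity estimate, the coupling matrix between $\int_0^t\|u\|_{H^3}^2$ and $\int_0^t\|\theta\|_{H^3}^2$ has both off-diagonal entries of order one (for any choice of Young weights $\nu+\nu^{-1}\ge 2$), so it is never diagonally dominant and the cross-terms cannot be absorbed. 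The claim that ``the cross-terms carry $\varepsilon$-small prefactors'' is therefore false: only one of the two directions can be made small, and the other stays $O(1)$.

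The paper avoids this by \emph{not} treating the capillary term as a Stokes forcing. Instead it applies $\partial^\alpha$ with $|\alpha|=2$ directly to the momentum equation and tests against $\partial^\alpha u$. The key identity is that, using $\eqref{EqNSK_Pertur}_1$,
\[
\sigma\int \big(\nabla(\rho_0+\theta)\cdot\partial^\alpha u\big)\,\partial^\alpha\Delta\theta
\;=\; \frac{\sigma}{2}\frac{d}{dt}\int |\partial^\alpha\nabla\theta|^2 \;+\; \text{(commutators)},
\]
so the top-order density norm $\|\partial^\alpha\nabla\theta\|_{L^2}^2$ moves to the \emph{left} side of the energy identity, simultaneously with $\int(\rho_0+\theta)|\partial^\alpha u|^2$ and the dissipation $\mu\int|\nabla\partial^\alpha u|^2$. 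What remains on the right is only of the form $\|u\|_{H^2}\,\cE+\cE^3$, where $\|u\|_{H^2}$ (unlike $\|\theta\|_{H^3}$) can be interpolated between $\|u\|_{L^2}$ and $\cE$, yielding the $\varepsilon\cE^2+\varepsilon^{-2}\|u\|_{L^2}^2$ structure in \eqref{EstNormU_H3_Theta_H4}. Your decoupled Stokes-plus-transport scheme loses exactly this cancellation.
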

\begin{proof}
For  $\alpha \in \N^3$ such that  $ |\alpha|=2$, applying the operator $\partial^\alpha$ to both sides of \eqref{EqNSK_Pertur_2nd} and multiplying the resulting equation by $\partial^\alpha u$, we obtain 
\begin{equation}\label{EqD^alphaU}
\begin{split}
&\int\partial^{\alpha}((\rho_0+\theta) \partial_t u) \cdot \partial^{\alpha}u +\sigma \int\partial^{\alpha}(\nabla(\rho_0+\theta) \Delta\theta) \cdot\partial^{\alpha}u  - \mu\int \Delta\partial^\alpha u \cdot \partial^{\alpha} u  \\
&= -\int\partial^{\alpha}((\rho_0+\theta)u\cdot\nabla u) \cdot \partial^{\alpha} u  -\sigma \int\partial^{\alpha}(\rho_0'' \nabla\theta)\cdot \partial^{\alpha}u - g\int\partial^{\alpha}\theta \partial^{\alpha}u_3.
\end{split}
\end{equation}
after using the integration by parts. Using $\eqref{EqNSK_Pertur}_1$, we compute  that 
\begin{equation}\label{EqD^alphaU_1}
\begin{split}
\int\partial^{\alpha}((\rho_0+\theta) \partial_t u) \cdot \partial^{\alpha}u &= \int(\rho_0+\theta) \partial_t\partial^\alpha u\cdot \partial^\alpha u + \sum_{0\neq \beta \leq \alpha} \int \partial^\beta(\rho_0+\theta) \partial^{\alpha-\beta}\partial_t u\cdot\partial^\alpha u\\
&=\frac12 \frac{d}{dt} \int(\rho_0+\theta) |\partial^\alpha u|^2 + \frac12 \int (u\cdot\nabla(\rho_0+\theta))|\partial^\alpha u|^2 \\
&\qquad + \sum_{0\neq \beta \leq \alpha} \int \partial^\beta(\rho_0+\theta) \partial^{\alpha-\beta}\partial_t u\cdot\partial^\alpha u.
\end{split}
\end{equation}
Furthermore, from $\eqref{EqNSK_Pertur}_1$, one has that
\[
\partial_t\partial^{\alpha}\theta +\partial^{\alpha}u\cdot \nabla (\rho_0+\theta)=- \sum_{\beta\in \N^3, 0\neq \beta \leq \alpha} \partial^{\alpha-\beta}u \cdot\nabla \partial^\beta(\rho_0+\theta).
\]
This yields,
\begin{equation}\label{EqD^alphaU_2}
\begin{split}
\int \partial^{\alpha}\Delta\theta (\nabla(\rho_0+\theta) \cdot \partial^{\alpha}u)   &= -\int \partial_t\partial^{\alpha}\theta \partial^{\alpha}\Delta\theta  -   \sum_{ 0\neq \beta\leq \alpha} \int\partial^{\alpha-\beta}u \cdot\nabla \partial^\beta(\rho_0+\theta) \Delta\partial^{\alpha}\theta  \\
&= \frac12 \frac{d}{dt} \int|\partial^{\alpha}\nabla\theta|^2  -   \sum_{0\neq \beta\leq \alpha} \int\partial^{\alpha-\beta}u \cdot\nabla \partial^\beta(\rho_0+\theta) \Delta\partial^{\alpha}\theta.
% &= \frac12 \frac{d}{dt} \int|\partial^{\alpha}\nabla\theta|^2  +\sum_{\beta\in \N^3, 0\neq\beta\leq \alpha} \int \nabla (\partial^{\alpha-\beta} u\cdot \nabla\partial^\beta(\rho_0+\theta)) \cdot\nabla \partial^{\alpha}\theta \\
%&= \frac12 \frac{d}{dt} \int|\partial^{\alpha}\nabla\theta|^2  +  \sum_{\beta\in \N^3, 0\neq\beta\leq \alpha} \int  \partial^{\alpha-\beta} u\cdot \Delta\partial^\beta(\rho_0+\theta) \cdot\nabla \partial^{\alpha}\theta.
 \end{split}
\end{equation}
Combining \eqref{EqD^alphaU_1} and \eqref{EqD^alphaU_2}, we  rewrite the l.h.s of \eqref{EqD^alphaU} as
\[
\begin{split}
&\int\partial^{\alpha}((\rho_0+\theta) \partial_t u) \cdot \partial^{\alpha}u +\sigma \int\partial^{\alpha}(\nabla(\rho_0+\theta) \Delta\theta) \cdot\partial^{\alpha}u  - \mu\int \Delta\partial^\alpha u \cdot \partial^{\alpha} u  \\
&= \frac12 \frac{d}{dt} \int( (\rho_0+\theta) |\partial^{\alpha}u|^2 +\sigma |\partial^{\alpha}\nabla\theta|^2) + \mu \int |\nabla\partial^\alpha u|^2  \\
&\qquad+ \frac12 \int (u\cdot\nabla(\rho_0+\theta))|\partial^\alpha u|^2 + \sum_{0\neq \beta \leq \alpha} \int \partial^\beta(\rho_0+\theta) \partial^{\alpha-\beta}\partial_t u\cdot\partial^\alpha u\\
&\qquad + \sum_{ 0\neq \beta\leq \alpha} \int\partial^{\alpha-\beta}u \cdot\nabla \partial^\beta(\rho_0+\theta) \Delta\partial^{\alpha}\theta.
\end{split}
\]
Thanks to Sobolev embedding, it can be seen that
\begin{equation}\label{EqD^alphaULeft_1}
\begin{split}
\int u\cdot\nabla(\rho_0+\theta)|\partial^\alpha u|^2 &\gtrsim - \|u\|_{H^2}\|\rho_0+\theta\|_{H^3}\|\partial^\alpha u\|_{L^2}^2 \\
& \gtrsim -(1+\|\theta\|_{H^3}) \|u\|_{H^2}^3.
\end{split}
\end{equation}
Using $\eqref{EstClassical}_1$  and Cauchy-Schwarz's inequality also, we get 
\begin{equation}\label{EqD^alphaULeft_2}
\begin{split}
&\sum_{0\neq \beta \leq \alpha} \int \partial^\beta(\rho_0+\theta) \partial^{\alpha-\beta}\partial_t u\cdot\partial^\alpha u  \\
&\gtrsim -( \|\nabla (\rho_0+\theta)\|_{H^2} \|\nabla \partial_t u\|_{L^2}+ \|\nabla^2(\rho_0+\theta)\|_{L^4} \|\partial_t u\|_{L^4}) \|\nabla^2 u\|_{L^2} \\
&\gtrsim - (1+\|\theta\|_{H^3}) \|\partial_t u\|_{H^1} \|u\|_{H^2}.
\end{split}
\end{equation}
We have that
\[
\begin{split}
& \sum_{0\neq \beta\leq \alpha} \int\partial^{\alpha-\beta}u \cdot\nabla \partial^\beta(\rho_0+\theta) \Delta\partial^{\alpha}\theta \\
&= \int u\cdot \nabla \partial^\alpha(\rho_0+\theta) \Delta\partial^\alpha\theta +\sum_{|\beta| =1} \int\partial^{\alpha-\beta}u \cdot\nabla \partial^\beta(\rho_0+\theta) \Delta\partial^{\alpha}\theta.
\end{split}
\]
Using Einstein's convention and the integration by parts, we obtain
\[
\begin{split}
\int u\cdot \nabla \partial^\alpha(\rho_0+\theta) \Delta\partial^\alpha\theta &= - \int \partial_j (u_i \partial_i \partial^\alpha(\rho_0+\theta) ) \partial_j \partial^\alpha\theta \\
&= -\int [ \partial_j(u_i \partial_i \partial^\alpha\rho_0) +\partial_j u_i \partial_i\partial^\alpha \theta] \partial_j \partial^\alpha\theta -\frac12\int u_i \partial_i |\partial_j\partial^\alpha \theta|^2 \\
&=-\int [\partial_j(u_i \partial_i \partial^\alpha\rho_0) + \partial_j u_i \partial_i\partial^\alpha \theta] \partial_j \partial^\alpha\theta + \frac12 \int |\nabla\partial^\alpha \theta|^2 \text{div}u,
\end{split}
\]
It yields
\[
\int u\cdot \nabla \partial^\alpha(\rho_0+\theta) \Delta\partial^\alpha\theta=-\int [\partial_j(u_i \partial_i \partial^\alpha\rho_0) + \partial_j u_i \partial_i\partial^\alpha \theta] \partial_j \partial^\alpha\theta. 
\]
Thanks to Cauchy-Schwarz's inequality and Sobolev embedding, we estimate that
\begin{equation}\label{EqD^alphaULeft_3}
\begin{split}
\int u\cdot \nabla \partial^\alpha(\rho_0+\theta) \Delta\partial^\alpha\theta  &\gtrsim -\|\nabla u\|_{L^2} \|\nabla\partial^\alpha \theta\|_{L^2} - \|\nabla u\|_{H^2} \|\nabla\partial^\alpha\theta\|_{L^2}^2 \\
&\gtrsim -\|u\|_{H^1}\|\theta\|_{H^3} - \|u\|_{H^3}\|\theta\|_{H^3}^2.
\end{split}
\end{equation}
In a same way, we have
\[
\begin{split}
&\sum_{ |\beta| =1} \int\partial^{\alpha-\beta}u \cdot\nabla \partial^\beta(\rho_0+\theta) \Delta\partial^{\alpha}\theta \\
&= -\sum_{|\beta| =1} \int \partial_j [\partial^{\alpha-\beta}u_i \partial_i \partial^\beta\rho_0]\partial_j \partial^\alpha\theta -\sum_{ |\beta| =1} \int \partial_j [\partial^{\alpha-\beta}u_i \partial_i \partial^\beta\theta]\partial_j \partial^\alpha\theta\\
&\gtrsim -\sum_{ |\beta| =1}\|\partial^{\alpha-\beta} u\|_{H^1}\|\nabla\partial^\alpha\theta\|_{L^2} - \sum_{ |\beta| =1} \|\partial^{\alpha-\beta} u\|_{H^2} \| \partial^\beta \theta\|_{H^2} \|\nabla\partial^\alpha\theta\|_{L^2} \\
&\qquad - \sum_{ |\beta| =1} \|\nabla \partial^{\alpha-\beta} u\|_{L^4}\|\nabla\partial^\beta \theta\|_{L^4} \|\nabla\partial^\alpha\theta\|_{L^2}. 
\end{split}
\]
Together with $\eqref{EstClassical}_1$, we deduce 
\begin{equation}\label{EqD^alphaULeft_4}
\begin{split}
\sum_{ |\beta| =1} \int\partial^{\alpha-\beta}u \cdot\nabla \partial^\beta(\rho_0+\theta) \Delta\partial^{\alpha}\theta &\gtrsim  -\|u\|_{H^2}\|\nabla\partial^\alpha \theta\|_{L^2} -   \|u\|_{H^3}\|\theta\|_{H^3}\|\nabla\partial^\alpha \theta\|_{L^2}\\
&\gtrsim -\|u\|_{H^2}\|\theta\|_{H^3} - \|u\|_{H^3}\|\theta\|_{H^3}^2.
\end{split}
\end{equation}
Combining \eqref{EqD^alphaULeft_1}, \eqref{EqD^alphaULeft_2} and \eqref{EqD^alphaULeft_3}, \eqref{EqD^alphaULeft_4} gives
\begin{equation}\label{EstD^alphaULeft}
\begin{split}
&\int\partial^{\alpha}((\rho_0+\theta) \partial_t u) \cdot \partial^{\alpha}u +\sigma \int\partial^{\alpha}(\nabla(\rho_0+\theta) \Delta\theta) \cdot\partial^{\alpha}u  - \mu\int \Delta\partial^\alpha u \cdot \partial^{\alpha} u  \\
&\gtrsim \frac{d}{dt} \int( (\rho_0+\theta) |\partial^{\alpha}u|^2 + |\partial^{\alpha}\nabla\theta|^2) + \int |\nabla\partial^\alpha u|^2  -  \|u\|_{H^2}\|\theta\|_{H^3}-\|\partial_t u\|_{H^1}\|u\|_{H^2} -\cE^3
\end{split}
\end{equation}

We now estimate the r.h.s of \eqref{EqD^alphaU}. 
\[
\begin{split}
&\int\partial^{\alpha}((\rho_0+\theta)u\cdot\nabla u) \cdot \partial^{\alpha} u \\
&= \int (\partial^{\alpha}((\rho_0+\theta)u\cdot\nabla u) -(\rho_0+\theta) u\cdot\nabla\partial^\alpha u) \cdot \partial^{\alpha} u + \int (\rho_0+\theta) (u\cdot\nabla\partial^\alpha u) \cdot \partial^\alpha u \\
&= \sum_{0\neq \beta\leq \alpha} \int ( \partial^\beta( (\rho_0+\theta)u) \cdot \partial^{\alpha-\beta}\nabla u) \cdot \partial^\alpha u + \frac12\int (\rho_0+\theta) u \cdot \nabla|\partial^\alpha u|^2.
\end{split}
\]
We use H\"older's inequality and Sobolev embedding to have  
\[
\begin{split}
&\sum_{0\neq \beta\leq \alpha} \int ( \partial^\beta( (\rho_0+\theta)u) \cdot \partial^{\alpha-\beta}\nabla u) \cdot \partial^\alpha u \\
&\lesssim  (\|\nabla( (\rho_0+\theta)u)\|_{H^2}\| \nabla^2 u\|_{L^2}+ \|\nabla^2((\rho_0+\theta)u)\|_{L^4} \|\nabla u\|_{L^4})\|\nabla^2 u\|_{L^2} \\
&\lesssim \|(\rho_0+\theta)u\|_{H^3} \|u\|_{H^2}^2.
\end{split}
\]
We get further
\[\begin{split}
\sum_{0\neq \beta\leq \alpha} \int ( \partial^\beta( (\rho_0+\theta)u) \cdot \partial^{\alpha-\beta}\nabla u) \cdot \partial^\alpha u &\lesssim \| \rho_0+\theta\|_{H^3} \|u\|_{H^3} \|u\|_{H^3}^2   \\
&\lesssim (1+\|\theta\|_{H^3})\|u\|_{H^3}^3.
\end{split}
\]
Thanks to the integration by parts and Sobolev embedding, we have
\begin{equation}\label{EqD^alphaURight_1}
\begin{split}
\int (\rho_0+\theta) u \cdot \nabla|\partial^\alpha u|^2= - \int (u\cdot \nabla(\rho_0+\theta))|\partial^\alpha u|^2 &\lesssim \|u\|_{H^2} \|\rho_0+\theta\|_{H^3}\|u\|_{H^2}^2 \\
&\lesssim (1+\|\theta\|_{H^3})\|u\|_{H^3}^3.
\end{split}
\end{equation}
We also have
\begin{equation}\label{EqD^alphaURight_2}
\begin{split}
\int\partial^{\alpha}(\rho_0'' \nabla\theta)\cdot \partial^{\alpha} u  \lesssim \|\nabla\theta\|_{H^2}\|u\|_{H^2} \lesssim \|\theta\|_{H^3}\|u\|_{H^2}.
\end{split}
\end{equation}
In view of \eqref{EqD^alphaURight_1} and \eqref{EqD^alphaURight_2}, we have
\begin{equation}\label{EstD^alphaURight}
\begin{split}
&\int\partial^{\alpha}((\rho_0+\theta)u\cdot\nabla u) \cdot \partial^{\alpha} u  +\sigma \int\partial^{\alpha}(\rho_0'' \nabla\theta)\cdot \partial^{\alpha} u  + g\int\partial^{\alpha}\theta \partial^{\alpha}u_3 \\
&\qquad\lesssim \| u\|_{H^2} \|\theta\|_{H^3} + (1+\|\theta\|_{H^3})\|u\|_{H^3}^3.
\end{split}
\end{equation}
We combine \eqref{EstD^alphaULeft}, \eqref{EstD^alphaURight} and \eqref{BoundDtUinH^1byE} to obtain that 
\begin{equation}\label{EstNormU_H3_Theta_H4_1}
\begin{split}
 \frac{d}{dt} \int( (\rho_0+\theta) |\partial^{\alpha}u|^2 + |\partial^{\alpha}\nabla\theta|^2) + \int |\nabla\partial^\alpha u|^2 &\lesssim   \|u\|_{H^2}^2 +(\|\theta\|_{H^3}+ \|\partial_t u\|_{H^1}) +\cE^3 \\
 &\lesssim \|u\|_{H^2}\cE+\cE^3.
\end{split}
\end{equation}
It follows from $\eqref{EstClassical}_3$ that  $\|u\|_{H^2} \lesssim \varepsilon \|u\|_{H^3}+\varepsilon^{-2} \|u\|_{L^2}$.
It yields
%\begin{equation}\label{EstNormU_H3_Theta_H4_2}
%\begin{split}
%\|u\|_{H^3}\cE &\lesssim (\varepsilon \|u\|_{H^4}+ \varepsilon^{-3}\|u\|_{L^2})\cE \\
%&\lesssim \varepsilon \|u\|_{H^4}\cE + \varepsilon^{-3} (\varepsilon^4 \|\theta\|_{H^4}^2+\varepsilon^{-4}\cE^2) \\
%&\lesssim \varepsilon (\|\theta\|_{H^4}^2+\|u\|_{H^4}^2) + \varepsilon^{-7}\|u\|_{L^2}^2.
%\end{split}
%\end{equation}
%Substituting \eqref{EstNormU_H3_Theta_H4_2} into \eqref{EstNormU_H3_Theta_H4_1}, we have 
\[
 \frac{d}{dt} \int( (\rho_0+\theta) |\partial^{\alpha}u|^2 + |\partial^{\alpha}\nabla\theta|^2) + \int |\nabla\partial^\alpha u|^2 \lesssim  \varepsilon \cE^2 +\varepsilon^{-2}\|u\|_{L^2}^2 +\cE^3.
\]
Integrating in time from $0$ to $t$, we deduce 
\[
\begin{split}
&\int (\rho_0+\theta(t)) |\partial^{\alpha}u(t)|^2 + |\partial^{\alpha}\nabla\theta(t)|^2) +\int_0^t \int |\nabla\partial^\alpha u(s)|^2 ds \\
&\qquad\lesssim \cE^2(0) +  \varepsilon \int_0^t \cE^2(s)ds + \varepsilon^{-2} \int_0^t \|u(s)\|_{L^2}^2 ds+ \int_0^t \cE^3(s)ds.
\end{split}
\]
Summing over $0\neq \alpha \in \N^3$ and chaining with \eqref{EstNormU_L2_Theta_H1}, the inequality \eqref{EstNormU_H3_Theta_H4} follows. Proposition \ref{PropNormU_H3_Theta_H4} is proven. 
\end{proof}

We apply the classical regularity theory on the Stokes equations to obtain further some elliptic estimates.

\begin{proposition}\label{PropEllipticEst}
There holds
\begin{equation}\label{EstElliptic}
\|\nabla^2 u\|_{H^1} +\|\nabla q\|_{H^1} \lesssim  \|\partial_t u\|_{H^1} +\|\theta\|_{H^3}+\cE^2.
\end{equation}
\end{proposition}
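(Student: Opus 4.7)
The plan is to treat the momentum equation $\eqref{EqNSK_Pertur}_2$ as a stationary Stokes system and invoke classical elliptic regularity. First, I would rearrange by moving $\rho_0\partial_t u$, the linear $\theta$-terms and the nonlinear right-hand side all into a single source term, obtaining
\[
-\mu\Delta u + \nabla q = F \quad\text{in }\Omega,\qquad \text{div } u =0,\qquad u|_{\partial\Omega}=0,
\]
where
\[
F := -\rho_0 \partial_t u - \sigma\rho_0'\Delta\theta\, e_3 - \sigma\rho_0''\nabla\theta - g\theta e_3 -\theta \partial_t u - (\rho_0+\theta) u\cdot\nabla u - \sigma \nabla\theta\, \Delta\theta.
\]
Since $u$ vanishes on $\partial\Omega$ and the horizontal variables are periodic, the classical $H^s$-regularity theory for the Stokes problem on a slab yields $\|\nabla^2 u\|_{H^1} + \|\nabla q\|_{H^1} \lesssim \|F\|_{H^1}$, reducing the proposition to the estimate $\|F\|_{H^1}\lesssim \|\partial_t u\|_{H^1}+\|\theta\|_{H^3}+\cE^2$.

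Next I would bound $\|F\|_{H^1}$ piece by piece. The linear contributions are immediate: $\|\rho_0\partial_t u\|_{H^1}\lesssim \|\partial_t u\|_{H^1}$, while the $\theta$-dependent linear pieces, whose top-derivative piece $\rho_0'\Delta\theta$ requires $H^1$-control of $\Delta\theta$, contribute $\lesssim \|\theta\|_{H^3}$. For the three nonlinear contributions I would invoke the three-dimensional product estimate $\|fg\|_{H^1}\lesssim \|f\|_{H^2}\|g\|_{H^1}$, itself a consequence of $H^2\hookrightarrow L^\infty$ and $H^1\hookrightarrow L^4$ already recorded in \eqref{EstClassical}, combined with Lemma \ref{LemBoundU}. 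This yields $\|\theta \partial_t u\|_{H^1}\lesssim \|\theta\|_{H^2}\|\partial_t u\|_{H^1}\lesssim \cE^2$, and under the smallness assumption \eqref{AssumeSmallE},
\[
\|(\rho_0+\theta) u\cdot\nabla u\|_{H^1}\lesssim (1+\|\theta\|_{H^2})\|u\|_{H^2}\|\nabla u\|_{H^1}\lesssim \cE^2, \qquad \|\nabla\theta\, \Delta\theta\|_{H^1}\lesssim \|\theta\|_{H^3}^2\lesssim \cE^2.
\]
Summing all the bounds gives precisely \eqref{EstElliptic}.

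The main obstacle is a bookkeeping one rather than a conceptual one: one must check that the two most demanding terms, $\nabla\theta\,\Delta\theta$ and $\theta\partial_t u$, close at the $H^1$-level without forcing $\theta\in H^4$ or $\partial_t u\in H^2$. The first is handled because $\nabla\theta\in H^2$ is available from $\|\theta\|_{H^3}$ while $\Delta\theta\in H^1$ only costs one more derivative; the second is handled because Lemma \ref{LemBoundU} upgrades $\|\partial_t u\|_{H^1}$ to a quantity of size $\cE$, which then absorbs into $\cE^2$ once multiplied by $\|\theta\|_{H^2}\lesssim \cE$. With these verifications the Stokes regularity step is a black-box application of the classical theory for Dirichlet/horizontally-periodic slab domains, and no new integration-by-parts or spectral input beyond that of Section \ref{SectEnergyEstimates} is required.
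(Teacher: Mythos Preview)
Your proposal is correct and follows essentially the same route as the paper: rewrite $\eqref{EqNSK_Pertur}_2$ as a stationary Stokes system with a composite source, apply classical Stokes regularity, and estimate the source in $H^1$ term by term. The only cosmetic difference is that the paper keeps $(\rho_0+\theta)\partial_t u$ and $\nabla(\rho_0+\theta)\Delta\theta$ grouped and uses the smallness \eqref{AssumeSmallE} to write $(1+\|\theta\|_{H^2})\|\partial_t u\|_{H^1}\lesssim \|\partial_t u\|_{H^1}$, whereas you split off the $\theta\partial_t u$ and $\nabla\theta\,\Delta\theta$ pieces and absorb them into $\cE^2$ via Lemma~\ref{LemBoundU}; either bookkeeping closes.
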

\begin{proof}
We rewrite $\eqref{EqNSK_Pertur}_2$ as 
\begin{equation}
\begin{split}
-\mu\Delta u +\nabla q= -(\rho_0+\theta) \partial_t u -(\rho_0+\theta) u\cdot \nabla u -\sigma ( \nabla(\rho_0+\theta) \Delta\theta +\rho_0'' \nabla\theta)- g\theta e_3.
\end{split}
\end{equation}
Applying the classical regularity theory on the Stokes equations to the resulting equation, we have
\begin{equation}
\begin{split}
\|\nabla^2 u\|_{H^1}+ \|\nabla q\|_{L^2} &\lesssim \|(\rho_0+\theta)\partial_t u\|_{H^1} + \|(\rho_0+\theta) u\cdot\nabla u\|_{H^1} \\
&\qquad+ \|\nabla(\rho_0+\theta) \Delta\theta\|_{H^1} + \|\nabla\theta\|_{H^1}+\|\theta\|_{H^1} \\
&\lesssim (1+\|\theta\|_{H^2})\|\partial_t u\|_{H^1}+\|\theta\|_{H^3} +\cE^2 \\
&\lesssim \|\partial_t u\|_{H^1} +\|\theta\|_{H^3}+\cE^2.
\end{split} 
\end{equation}
Hence, \eqref{EstElliptic} is established.
\end{proof}

Thanks to Propositions \ref{PropEllipticEst}, we are able to prove Proposition \ref{PropEstimates}.
\begin{proof}[Proof of Proposition \ref{PropEstimates}]  
Combining the two inequalities \eqref{EstIntDtU_L2} and \eqref{EstIntDt^2U_L2}  from Proposition \ref{PropNormU_DtU_H1}, we have 
\begin{equation}\label{0_EstApriori}
\begin{split}
&\|\partial_t u(t)\|_{H^1}^2 + \|\partial_t\theta(t)\|_{L^2}^2 + \int_0^t (\|\partial_t u(s)\|_{H^1}^2 + \|\partial_t^2 u(s)\|_{L^2}^2) ds \\
&\qquad\lesssim  \cE^2(0)+ \int_0^t (\varepsilon \|u(s)\|_{H^3}^2 + \varepsilon^{-2}\|u(s)\|_{L^2}^2 +\cE^4(s))ds
\end{split}
\end{equation}
In view of \eqref{0_EstApriori} and the estimate \eqref{EstNormU_H3_Theta_H4} from Proposition \ref{PropNormU_H3_Theta_H4}, we get
%\begin{equation}\label{1_EstApriori}
%\begin{split}
%&\|u(t)\|_{H^2}^2 + \|\theta(t)\|_{H^3}^2 +\|\partial_t u(t)\|_{H^1}^2 + \|\partial_t\theta(t)\|_{L^2}^2 \\
%&\qquad\qquad+ \int_0^t (\|\nabla u(s)\|_{H^2}^2+ \|\partial_t u(s)\|_{H^1}^2 + \|\partial_t^2 u(s)\|_{L^2}^2) ds \\
%&\lesssim \cE^2(0)+\varepsilon  \int_0^t( \cE^2(s)+\|\partial_t u(s)\|_{H^1}^2) ds+ \varepsilon^{-2} \int_0^t\|(\theta,u)(s)\|_{L^2}^2 ds+  \int_0^t \cE^3(s)ds.
%\end{split}
%\end{equation}
%Let us refine $\varepsilon$ to deduce from \eqref{1_EstApriori} that, 
\[
\begin{split}
&\|u(t)\|_{H^2}^2 + \|\theta(t)\|_{H^3}^2 +\|\partial_t u(t)\|_{H^1}^2 + \|\partial_t\theta(t)\|_{L^2}^2 \\
&\qquad\qquad+ \int_0^t (\|\nabla u(s)\|_{H^2}^2+ \|\partial_t u(s)\|_{H^1}^2 + \|\partial_t^2 u(s)\|_{L^2}^2) ds \\
&\lesssim \cE^2(0)+\varepsilon  \int_0^t \cE^2(s) ds+ \varepsilon^{-2} \int_0^t\|(\theta,u)(s)\|_{L^2}^2 ds+  \int_0^t \cE^3(s)ds.
\end{split}
\]
The resulting inequality and the estimate  \eqref{EstElliptic} from Proposition \ref{PropEllipticEst} yield
\begin{equation}\label{2_EstApriori}
\begin{split}
&\|u(t)\|_{H^2}^2 + \|\theta(t)\|_{H^3}^2 +\|\partial_t u(t)\|_{H^1}^2 + \|\partial_t\theta(t)\|_{L^2}^2 +\varepsilon^{1/2} (\|\nabla^2u(t)\|_{H^1}+\|\nabla q(t)\|_{L^2}^2) \\
&\qquad\qquad+ \int_0^t (\|\nabla u(s)\|_{H^2}^2+ \|\partial_t u(s)\|_{H^1}^2 + \|\partial_t^2 u(s)\|_{L^2}^2) ds \\
&\lesssim \varepsilon^{1/2} (\|\partial_tu(t)\|_{H^1}^2 +\|\theta(t)\|_{H^3}^2+\cE^4(t)) +  \cE^2(0)\\
&\qquad\qquad+\varepsilon  \int_0^t\cE^2(s) ds+ \varepsilon^{-2} \int_0^t\|(\theta,u)(s)\|_{L^2}^2 ds+  \int_0^t \cE^3(s)ds.
\end{split}
\end{equation}
We decrease $\varepsilon$ if necessary to obtain from \eqref{2_EstApriori} that
\[
\begin{split}
&\|u(t)\|_{H^3}^2 + \|\theta(t)\|_{H^3}^2 +\|\partial_t u(t)\|_{H^1}^2 + \|\partial_t\theta(t)\|_{L^2}^2 +\|\nabla q(t)\|_{L^2}^2 \\
&\qquad\qquad+ \int_0^t (\|\nabla u(s)\|_{H^2}^2+ \|\partial_t u(s)\|_{H^1}^2 + \|\partial_t^2 u(s)\|_{L^2}^2) ds \\
&\lesssim \cE^4(t) + \varepsilon^{-1/2} \cE^2(0)+\varepsilon^{1/2}  \int_0^t\cE^2(s) ds+ \varepsilon^{-5/2} \int_0^t\|(\theta,u)(s)\|_{L^2}^2 ds+ \varepsilon^{-1/2} \int_0^t \cE^3(s)ds.
\end{split}
\]
This implies 
\begin{equation}\label{3_EstApriori}
\begin{split}
&\cE^2(t) +\|\partial_t u(t)\|_{H^1}^2 + \|\partial_t\theta(t)\|_{L^2}^2 +\|\nabla q(t)\|_{L^2}^2 + \int_0^t (\|\nabla u(s)\|_{H^2}^2+ \|\partial_t u(s)\|_{H^1}^2 + \|\partial_t^2 u(s)\|_{L^2}^2) ds \\
&\lesssim \delta_0^2 \cE^2(t) + \varepsilon^{-1/2} \cE^2(0)+\varepsilon^{1/2}  \int_0^t\cE^2(s) ds+ \varepsilon^{-5/2} \int_0^t\|(\theta,u)(s)\|_{L^2}^2 ds+ \varepsilon^{-1/2} \int_0^t \cE^3(s)ds.
\end{split}
\end{equation}
If $\delta_0$ is taken small enough, the inequality \eqref{3_EstApriori} yields
\begin{equation}\label{4_EstApriori}
\begin{split}
&\cE^2(t) +\|\partial_t u(t)\|_{H^1}^2 + \|\partial_t\theta(t)\|_{L^2}^2 +\|\nabla q(t)\|_{L^2}^2 + \int_0^t (\|\nabla u(s)\|_{H^2}^2+ \|\partial_t u(s)\|_{H^1}^2 + \|\partial_t^2 u(s)\|_{L^2}^2) ds \\
&\lesssim  \varepsilon^{-1/2} \cE^2(0)+\varepsilon^{1/2}  \int_0^t\cE^2(s) ds+ \varepsilon^{-5/2} \int_0^t\|(\theta,u)(s)\|_{L^2}^2 ds+ \varepsilon^{-1/2} \int_0^t \cE^3(s)ds.
\end{split}
\end{equation}
Let us change $\varepsilon^{1/2}$ by $\varepsilon$ in \eqref{4_EstApriori}, the inequality \eqref{EstApriori} thus follows. The proof of Proposition \ref{PropEstimates} is finished. 
\end{proof}

\subsection{Proof of Theorem \ref{ThmUnstable}}\label{SectNonlinearInstability}

As presented in Section \ref{SectMainResults}, let us consider the nonlinear equations \eqref{EqDiff} with the initial data $(\theta^d, u^d)(0)=0$. The aim of this section is to derive a bound in time of $(\theta^d, u^d)$.

Let 
\[
F_\sN(t)=\sum_{j=j_m}^\sN |\Csf_j| e^{\lambda_j t}
\]
and $0<\varepsilon_0\ll 1$ be fixed later. There exists a unique $T^\delta$ such that $\delta F_\sN(T^\delta)=\varepsilon_0$. Let 
\begin{equation}\label{DefC_12}
C_1 := \sqrt{\|\theta^\sN(0)\|_{H^3}^2+\|u^\sN(0)\|_{H^3}^2}, \quad C_2 := \sqrt{\|\theta^\sN(0)\|_{L^2}^2+\|u^\sN(0)\|_{L^2}^2},
\end{equation}
we define 
\begin{equation}\label{DefTstar}
\begin{split}
T^\star &:= \sup\{ t\in (0, T^{\max}), \cE(\theta^\delta(t), u^\delta(t)) \leq C_1\delta_0\}, \\
T^{\star\star} &:=  \sup\{ t\in (0, T^{\max}),\|( \theta^\delta, u^\delta)(t)\|_{L^2} \leq 2C_2\delta F_\sN(t) \}.
\end{split}
\end{equation}
Note that $\cE(\theta^\delta(0), u^\delta(0)) =C_1\delta <C_1\delta_0$, hence $T^\star>0$. Similarly, we have $T^{\star\star}>0$. Then for all $t\leq \min\{T^\delta, T^\star, T^{\star\star}\}$, we derive the following bound in time of 
$\cE(\theta^\delta(t), u^\delta(t))$.
\begin{proposition}\label{PropNormUdelta}
For all $t\leq \min\{T^\delta, T^\star, T^{\star\star}\}$, there holds
\begin{equation}\label{NormUdelta}
\begin{split}
%&\|(\theta^\delta, u^\delta)(t) \|_{H^3}+\|\partial_t u^\delta(t)\|_{H^1} + \|\partial_t\theta^\delta(t)\|_{L^2} +\|\nabla p^\delta(t)\|_{L^2} \\
%&\qquad\qquad+ \int_0^t (\|\nabla u^\delta(s)\|_{H^2}+ \|\partial_t u^\delta(s)\|_{H^1} + \|\partial_t^2 u^\delta(s)\|_{L^2}) ds \lesssim \delta F_\sN(t). 
\|\theta^\delta(t)\|_{H^3}+ \|u^\delta(t)\|_{H^3}+\|\partial_t u^\delta(t)\|_{H^1} + \|\partial_t\theta^\delta(t)\|_{L^2}  \leq C_3 \delta F_\sN(t).
\end{split}
\end{equation}
\end{proposition}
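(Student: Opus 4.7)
The plan is to feed $(\theta^\delta,u^\delta)$ into the \textit{a priori} estimate of Proposition \ref{PropEstimates}, use the two bootstrap bounds built into the definitions of $T^\star$ and $T^{\star\star}$ to convert its right-hand side into a Gronwall-type integral inequality, and then exploit the exponential growth of $F_\sN$ to close it. Since $\cE(\theta^\delta,u^\delta)(t)\leq C_1\delta_0$ on $[0,T^\star]$ and $\delta_0$ will be taken small, the smallness assumption \eqref{AssumeSmallE} is in force, so \eqref{EstApriori} applies. From \eqref{DefC_12} one has $\cE^2(\theta^\delta,u^\delta)(0)=C_1^2\delta^2$; under $T^{\star\star}$ one has $\|(\theta^\delta,u^\delta)(s)\|_{L^2}^2\leq 4C_2^2\delta^2 F_\sN^2(s)$; under $T^\star$ one has $\cE^3(s)\leq C_1\delta_0\,\cE^2(s)$. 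Plugging these into \eqref{EstApriori} yields
\[
\cE^2(t)+\|\partial_t u^\delta(t)\|_{H^1}^2+\|\partial_t\theta^\delta(t)\|_{L^2}^2
\leq A\delta^2+B\delta^2\!\int_0^t F_\sN^2(s)\,ds+D\!\int_0^t \cE^2(s)\,ds,
\]
with $A=C_0C_1^2\varepsilon^{-1}$, $B=4C_0C_2^2\varepsilon^{-5}$, $D=C_0(\varepsilon+\varepsilon^{-1}C_1\delta_0)$.

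The key observation is that $\lambda_{j_m}=\max\{\lambda_j:\Csf_j\neq 0\}$ is strictly positive and in fact bounded below by $\tfrac{2}{3}\Lambda$ by \eqref{AssumeLambda}, so $F_\sN$ is squeezed between two multiples of $e^{\lambda_{j_m}t}$. A direct computation gives
\[
|\Csf_{j_m}|^2 e^{2\lambda_{j_m}t}\leq F_\sN^2(t)\leq \Bigl(\sum_{j\geq j_m}|\Csf_j|\Bigr)^{2} e^{2\lambda_{j_m}t},\qquad
\int_0^t F_\sN^2(s)\,ds\leq \tfrac{1}{2\lambda_{j_m}}F_\sN^2(t)\cdot C(\Csf).
\]
The standard Gronwall lemma applied to the above integral inequality then produces
\[
\cE^2(t)\leq \Bigl(A\delta^2+B\delta^2\!\int_0^t F_\sN^2(s)\,ds\Bigr)e^{Dt}
\leq C\delta^2 e^{Dt}+C\delta^2 e^{Dt}\!\int_0^t e^{2\lambda_{j_m}s}\,ds.
\]

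The crux is now the parameter tuning: I would choose $\varepsilon$ so small that $C_0\varepsilon<\lambda_{j_m}/2$, and then $\delta_0$ so small that $C_0\varepsilon^{-1}C_1\delta_0<\lambda_{j_m}/2$, both of which are possible because $\lambda_{j_m}$ depends only on the data. This ensures $D<2\lambda_{j_m}$. With this gap, $e^{Dt}\leq C\,e^{2\lambda_{j_m}t}\leq C'F_\sN^2(t)$, and $e^{Dt}\int_0^t e^{2\lambda_{j_m}s}\,ds\leq \tfrac{1}{2\lambda_{j_m}-D}e^{2\lambda_{j_m}t}\leq C''F_\sN^2(t)$. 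Hence $\cE^2(t)\leq C_3^2\delta^2 F_\sN^2(t)/4$ (say), and because $\|\partial_t u^\delta\|_{H^1}^2+\|\partial_t\theta^\delta\|_{L^2}^2$ sits on the LHS of \eqref{EstApriori} with the same bound, \eqref{NormUdelta} follows after taking square roots and enlarging $C_3$.

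The main obstacle is precisely this parameter balancing: the $\varepsilon^{-5}$ factor forces $\varepsilon$ to be genuinely small, and then $\varepsilon^{-1}C_1\delta_0$ has to be absorbed, but because one must simultaneously keep $D$ below the explicit threshold $2\lambda_{j_m}$, the choice of $\delta_0$ is only legitimate since $\lambda_{j_m}\geq\tfrac{2}{3}\Lambda$ is bounded away from zero independently of $\delta$. Once this spectral-gap input from \eqref{AssumeLambda} is used, the Gronwall closure is routine.
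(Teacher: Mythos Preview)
Your strategy is exactly the paper's: feed $(\theta^\delta,u^\delta)$ into the a priori estimate \eqref{EstApriori}, use the bootstrap bounds built into $T^\star$ and $T^{\star\star}$, tune $\varepsilon$ and then $\delta_0$ so that the Gronwall constant is below a fixed spectral threshold, and close. The paper carries this out with threshold $\lambda_\sN$ rather than your $\lambda_{j_m}$, but either choice works.

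There is, however, a genuine error in your Gronwall step. You first invoke the non-decreasing form $\cE^2(t)\leq a(t)e^{Dt}$ with $a(t)=A\delta^2+B\delta^2\int_0^t F_\sN^2$, and then assert
\[
e^{Dt}\int_0^t e^{2\lambda_{j_m}s}\,ds \;\leq\; \frac{1}{2\lambda_{j_m}-D}\,e^{2\lambda_{j_m}t}.
\]
This is false: the left side equals $e^{Dt}\dfrac{e^{2\lambda_{j_m}t}-1}{2\lambda_{j_m}}\sim\dfrac{1}{2\lambda_{j_m}}e^{(D+2\lambda_{j_m})t}$, which overshoots $e^{2\lambda_{j_m}t}$ by the factor $e^{Dt}$. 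Since $T^\delta\to\infty$ as $\delta\to0$, no uniform constant can absorb this, and the naive Gronwall form $a(t)e^{Dt}$ is simply too lossy to close.

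The repair is to use the integral form of Gronwall: from $y(t)\leq a(t)+D\int_0^t y(s)\,ds$ one gets $y(t)\leq a(t)+D\int_0^t e^{D(t-s)}a(s)\,ds$. Applied here this yields
\[
\cE^2(t)\;\lesssim\;\delta^2 F_\sN^2(t)+\delta^2\int_0^t e^{D(t-s)}F_\sN^2(s)\,ds,
\]
and now the correct computation $\int_0^t e^{D(t-s)}e^{2\lambda_j s}\,ds=e^{Dt}\int_0^t e^{(2\lambda_j-D)s}\,ds\leq \dfrac{e^{2\lambda_j t}}{2\lambda_j-D}$ closes the estimate. This is precisely how the paper argues (arranging $D\leq\lambda_\sN$ and using $2\lambda_j-\lambda_\sN>0$ for every $j$). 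Your closing remark about needing $\lambda_{j_m}\geq\tfrac23\Lambda$ is not essential here: any fixed positive lower bound on the threshold suffices, since $\varepsilon$ and $\delta_0$ may depend on the data but not on $\delta$.
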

\begin{proof}
For short, we write $\cE_\delta(t)$ instead of $\cE(\theta^\delta(t), u^\delta(t))$. It follows from the \textit{a priori} energy estimate \eqref{EstApriori} that 
\begin{equation}\label{0_NormUdelta}
\begin{split}
&\cE_\delta^2(t) +\|\partial_t u^\delta(t)\|_{H^1}^2 + \|\partial_t\theta^\delta(t)\|_{L^2}^2 +\|\nabla q^\delta(t)\|_{L^2}^2 \\
&\qquad + \int_0^t (\|\nabla u^\delta(s)\|_{H^2}^2+ \|\partial_t u^\delta(s)\|_{H^1}^2 + \|\partial_t^2 u^\delta(s)\|_{L^2}^2) ds \\
&\leq C_3\Big( \varepsilon^{-1} \cE_\delta^2(0)+\varepsilon  \int_0^t\cE_\delta^2(s) ds+ \varepsilon^{-5} \int_0^t\|(\theta^\delta,u^\delta)(s)\|_{L^2}^2 ds+ \varepsilon^{-1} \int_0^t \cE_\delta^3(s)ds\Big), 
\end{split}
\end{equation}
Let us decrease $C_3\varepsilon \leq \frac{\lambda_\sN}2$, so that 
\[
\begin{split}
\cE_\delta^2(t) &\leq C_{\lambda_\sN}\delta^2+ \frac{\lambda_\sN}2 \int_0^t \cE_\delta^2(s)ds + C_{\lambda_\sN} \int_0^t \delta^2 F_\sN^2(s)ds +  C_{\lambda_\sN} \int_0^t \cE_\delta^3(s)ds \\
&\leq  \Big( \frac{\lambda_\sN}2+ C_{\lambda_\sN}\delta\Big) \int_0^t \cE_\delta^2(s)ds + C_4 \delta^2F_\sN^2(t). 
\end{split}
\]
Refining $\delta_0$ such that $C_{\lambda_\sN}\delta_0 \leq \frac{\lambda_\sN}2$, we observe 
\[
\cE_\delta^2(t)  \leq \lambda_\sN \int_0^t \cE_\delta^2(s)ds+ C_4 \delta^2F_\sN^2(t).
\]
Applying Gronwall's inequality, we have
\begin{equation}\label{1_NormUdelta}
\cE_\delta^2(t) \lesssim \delta^2 F_\sN^2(t) +\delta^2  \int_0^t e^{\lambda_\sN(t-s)}  F_\sN^2(s)ds.
\end{equation}
Note that $\lambda_\sN<\lambda_j$ for any $1\leq j<\sN$. Hence 
\begin{equation}\label{2_NormUdelta}
 \int_0^t e^{\lambda_\sN(t-s)}  F_\sN^2(s)ds \lesssim\sum_{j=j_m}^\sN \int_0^t |\Csf_j|^2 e^{\lambda_\sN t}  e^{ (2\lambda_j-\lambda_\sN) s} ds \lesssim \sum_{j=j_m}^\sN |\Csf_j|^2 \frac{e^{2\lambda_j t}}{2\lambda_j-\lambda_\sN}. 
\end{equation}
Substituting \eqref{2_NormUdelta} into \eqref{1_NormUdelta}, we  deduce that $\cE_\delta(t) \lesssim \delta F_\sN(t)$. Putting it back to \eqref{0_NormUdelta}, we conclude that \eqref{NormUdelta} holds. 
\end{proof}

Thanks to  Proposition \ref{PropNormUdelta}, we derive the following bound in time of $\|(\theta^d, u^d)(t)\|_{L^2}$.
\begin{proposition}\label{PropBoundDiff}
There holds 
\begin{equation}\label{BoundDiff}
\|\theta^d(t)\|_{L^2}^2+\| u^d(t) \|_{L^2}^2 \leq C_4\delta^3  \Big( \sum_{j=j_m}^\sK |\Csf_j| e^{\lambda_j t} +  \sum_{j=\sK+1}^\sN  |\Csf_j| e^{\frac23 \lambda_1 t}\Big)^3.
\end{equation}
\end{proposition}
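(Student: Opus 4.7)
The plan is to extend the linear energy method of Proposition \ref{PropSharpGrowthRate} and Lemma \ref{LemMaxLambda} to the inhomogeneous difference system \eqref{EqDiff}, to control the nonlinear source using the high-regularity bound of Proposition \ref{PropNormUdelta}, and then to reshape the resulting Duhamel integral into the cubic form of \eqref{BoundDiff}.

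First I would derive an energy inequality for $(\theta^d, u^d)$ in the spirit of \eqref{LemLambda_1}. Differentiating $\eqref{EqDiff}_2$ in time, taking the inner product with $\partial_t u^d$, and using $\eqref{EqDiff}_1$ together with the algebraic identities \eqref{LemLambda_5}--\eqref{LemLambda_6} to rewrite the gravity and capillary contributions, I obtain an identity of the form
\begin{equation*}
\tfrac{1}{2} \tfrac{d}{dt} \int \big( \rho_0 |\partial_t u^d|^2 - g \rho_0' |u_3^d|^2 + \sigma (\rho_0')^2 |\nabla u_3^d|^2 \big) + \mu \int \rho_0 |\nabla \partial_t u^d|^2 = \int R \cdot \partial_t u^d + (\text{commutators}),
\end{equation*}
where $R$ gathers the four nonlinear sources in $\eqref{EqDiff}_2$ and the commutators come from the extra $u^\delta \cdot \nabla \theta^\delta$ in $\eqref{EqDiff}_1$. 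Applying Lemma \ref{LemMaxLambda} to the variational part and combining with the interpolation identities \eqref{LemLambda_2}--\eqref{LemLambda_3} yields, by Gronwall,
\begin{equation*}
\|u^d(t)\|_{L^2}^2 + \mu \int_0^t \|\nabla u^d(s)\|_{L^2}^2\, ds \lesssim \int_0^t e^{2\Lambda(t-s)} \|R(s)\|_{L^2}^2\, ds,
\end{equation*}
while $\|\theta^d(t)\|_{L^2}$ is controlled by integrating $\eqref{EqDiff}_1$ in time and reusing the bound for $u_3^d$.

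Next I would estimate the source term by term. Proposition \ref{PropNormUdelta} provides $\|\theta^\delta\|_{H^3} + \|u^\delta\|_{H^3} + \|\partial_t u^\delta\|_{H^1} \lesssim \delta F_\sN(t)$, and combined with the Sobolev embedding $H^2(\Omega)\hookrightarrow L^\infty(\Omega)$ each of $u^\delta \cdot \nabla \theta^\delta$, $\theta^\delta \partial_t u^\delta$, $(\rho_0 + \theta^\delta) u^\delta \cdot \nabla u^\delta$ and $\nabla \theta^\delta \Delta \theta^\delta$ is $\lesssim \delta^2 F_\sN^2(t)$ in $L^2$, so that $\|R(t)\|_{L^2}^2 \lesssim \delta^4 F_\sN^4(t)$. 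Substituting gives $\|(\theta^d, u^d)(t)\|_{L^2}^2 \lesssim \delta^4 \int_0^t e^{2\Lambda(t-s)} F_\sN^4(s)\, ds$.

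The last step is to reshape this integral into the cubic form. Expanding $F_\sN^4(s) = \big( \sum_{j=j_m}^\sN |\Csf_j| e^{\lambda_j s} \big)^4$ as a multinomial sum and computing each exponential integral explicitly, the terms with $\lambda_{j_1} + \cdots + \lambda_{j_4} > 2\Lambda$ contribute $\prod_i e^{\lambda_{j_i} t}$ up to a constant, while the remaining ones are dominated by $e^{2\Lambda t}$. Since $\vk \in \cS_\Lambda$ forces $\lambda_j > \tfrac{2}{3}\Lambda$ for $1 \leq j \leq \sK$, any triple drawn from $[j_m, \sK]$ already satisfies $\lambda_{i_1} + \lambda_{i_2} + \lambda_{i_3} > 2\Lambda$; together with the smallness $\delta F_\sN(t) \leq \varepsilon_0 < 1$ on $[0, T^\delta]$, this allows me to absorb one factor of $\delta F_\sN$ and group the remaining terms into the claimed cube of the mixed sum over $j \leq \sK$ and $j > \sK$. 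The main obstacle is precisely this last combinatorial bookkeeping: following \cite{Tai22}, one must carefully separate the fast modes ($j \leq \sK$, whose exponentials are kept) from the slow modes ($j > \sK$, whose exponentials are absorbed into $e^{(2/3)\lambda_1 t}$) so as to match the exact structure of the right-hand side of \eqref{BoundDiff}.
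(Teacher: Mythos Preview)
Your strategy matches the paper's---time-differentiate \eqref{EqDiff}$_2$, test against $\partial_t u^d$, invoke \eqref{LemLambda_5}--\eqref{LemLambda_6} and Lemma~\ref{LemMaxLambda}, then close via \eqref{LemLambda_2}--\eqref{LemLambda_3} and Gronwall---but two points are missing. First, integrating your energy identity from $0$ to $t$ leaves the initial contribution $\|\sqrt{\rho_0}\,\partial_t u^d(0)\|_{L^2}^2$, which does \emph{not} vanish even though $(\theta^d,u^d)(0)=0$, because the right-hand side of \eqref{EqDiff}$_2$ is nonzero at $t=0$. The paper devotes a separate lemma (Lemma~\ref{LemDtUd(0)}) to showing this is $\lesssim\delta^3$; through Gronwall this term propagates as $\delta^3 e^{2\Lambda t}$ and needs $2\Lambda<3\lambda_{j_m}$ (guaranteed by $j_m\le\sK$) to be absorbed into the cubic sum. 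Second, after time differentiation the forcing is $\partial_t R$, not $R$, and the piece $\partial_t(\theta^\delta\partial_t u^\delta)$ contains $\theta^\delta\partial_t^2 u^\delta$, for which Proposition~\ref{PropNormUdelta} gives no pointwise-in-time control. The paper sidesteps this by first rewriting \eqref{EqDiff}$_2$ with $(\rho_0+\theta^\delta)\partial_t u^d$ on the left, so that only the explicit $\delta\theta^\delta\partial_t u^\sN$ remains on the right and $\partial_t^2 u^\delta$ never appears; you need the same manoeuvre.

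On the source estimate itself, the paper takes a shorter path than your $\delta^4 F_\sN^4$ detour: rather than squaring via Young's inequality, it bounds $\int\partial_t f^\delta\cdot\partial_t u^d\le\|\partial_t f^\delta\|_{L^2}\|\partial_t u^d\|_{L^2}\lesssim(\delta^2 F_\sN^2)(\delta F_\sN)=\delta^3 F_\sN^3$ directly, using $\|\partial_t u^d\|_{L^2}\le\|\partial_t u^\delta\|_{L^2}+\delta\|\partial_t u^\sN\|_{L^2}\lesssim\delta F_\sN$ from Proposition~\ref{PropNormUdelta}. This gives the cubic integrand $\delta^3\int_0^t e^{2\Lambda(t-s)}F_\sN^3(s)\,ds$ immediately, so the last step is the simple dichotomy $3\lambda_j\gtrless 2\Lambda$ on single exponentials $|\Csf_j|^3 e^{3\lambda_j s}$, rather than a multinomial expansion of $F_\sN^4$ followed by an extra absorption via $\delta F_\sN\le\varepsilon_0$.
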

To prove Proposition \ref{PropBoundDiff}, we need the  following lemma.
\begin{lemma}\label{LemDtUd(0)}
There holds 
\begin{equation}\label{BoundDtUd0}
\|\partial_t u^d(0)\|_{L^2}^2 \lesssim \delta^3.
\end{equation}
\end{lemma}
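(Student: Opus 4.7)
The plan is to evaluate the momentum equation for $u^d$ in \eqref{EqDiff} at $t=0$, use that $(\theta^d, u^d)(0)=0$ so that all linear and viscous contributions on the left-hand side drop out, and bound what remains by testing against $\partial_t u^d(0)$.

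First, specializing \eqref{EqDiff}$_2$ to $t=0$ and using $(\theta^d,u^d)(0)=0$ yields
\[
\rho_0\,\partial_t u^d(0)+\nabla q^d(0)= -\theta^\delta(0)\,\partial_t u^\delta(0) -(\rho_0+\theta^\delta(0))\,u^\delta(0)\cdot\nabla u^\delta(0) -\sigma\,\nabla\theta^\delta(0)\,\Delta\theta^\delta(0).
\]
I would then take the $L^2(\Omega)$ inner product of this identity with $\partial_t u^d(0)$. Because \eqref{EqDiff}$_3$ gives $\text{div}\,\partial_t u^d=0$ and the boundary condition implies $\partial_t u^d|_{\partial\Omega}=0$, integration by parts removes the pressure term. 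Combined with $\rho_0\geq\min_I\rho_0>0$ and Cauchy--Schwarz, this produces
\[
\|\partial_t u^d(0)\|_{L^2}\lesssim \|\theta^\delta(0)\,\partial_t u^\delta(0)\|_{L^2}+\|(\rho_0+\theta^\delta(0))\,u^\delta(0)\cdot\nabla u^\delta(0)\|_{L^2}+\|\nabla\theta^\delta(0)\,\Delta\theta^\delta(0)\|_{L^2}.
\]

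The heart of the argument is then estimating each nonlinear term using the fact that the initial data $(\theta^\delta,u^\delta)(0)=\delta(\theta^\sN,u^\sN)(0)$ is of size $\delta$ in every Sobolev norm, with constant controlled by $C_1$ from \eqref{DefC_12}. In particular, applying Lemma \ref{LemBoundU} to the nonlinear solution $(\theta^\delta,u^\delta)$ at $t=0$ gives $\|\partial_t u^\delta(0)\|_{L^2}\lesssim \|\theta^\delta(0)\|_{H^2}+\|u^\delta(0)\|_{H^2}\lesssim\delta$. By Sobolev embedding $\|\theta^\delta(0)\|_{L^\infty}\lesssim\|\theta^\delta(0)\|_{H^2}\lesssim\delta$, hence the first term is $O(\delta^2)$; the second is $O(\delta^2)$ via H\"older with $\|u^\delta(0)\|_{H^2}^2\lesssim\delta^2$; and the third is $O(\delta^2)$ because $\|\nabla\theta^\delta(0)\|_{L^4}\|\Delta\theta^\delta(0)\|_{L^4}\lesssim\|\theta^\delta(0)\|_{H^3}^2\lesssim\delta^2$ by \eqref{EstClassical}$_1$.

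Putting the pieces together yields $\|\partial_t u^d(0)\|_{L^2}\lesssim\delta^2$, and squaring gives $\|\partial_t u^d(0)\|_{L^2}^2\lesssim\delta^4\leq\delta^3$ for $\delta<1$, which is \eqref{BoundDtUd0}. I do not anticipate a serious obstacle. The only subtle point is that the control of $\partial_t u^\delta(0)$ must come from the \emph{nonlinear} equation \eqref{EqNSK_Pertur_2nd} rather than from the linearized problem, but this is exactly what Lemma \ref{LemBoundU} delivers and no loss in $\delta$ is incurred.
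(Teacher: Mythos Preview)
Your proof is correct and follows essentially the same approach as the paper: test the difference momentum equation against $\partial_t u^d$, remove the pressure via incompressibility, and bound the quadratic right-hand side using the smallness of the initial data. Your version is slightly more streamlined (you evaluate at $t=0$ before testing, so the linear terms in $u^d,\theta^d$ never appear) and in fact yields the sharper bound $\|\partial_t u^d(0)\|_{L^2}^2\lesssim\delta^4$, whereas the paper carries the linear terms to the end and obtains only $\delta^3$; either suffices.
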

\begin{proof}
Due to the incompressibility condition, it follows from $\eqref{EqDiff}_2$ that 
\[
\begin{split}
\int \rho_0 |\partial_t u^d|^2 &= \int (\mu \Delta u^d -\sigma(\rho_0' \Delta\theta^d e_3+\rho_0''\nabla\theta^d)-g\theta^d)\cdot\partial_t u^d \\
&\qquad - \int (\theta^\delta\partial_t u^\delta-(\rho_0+\theta^\delta) u^\delta\cdot\nabla u^\delta -\sigma \nabla\theta^\delta\Delta\theta^\delta) \cdot\partial_t u^d.
\end{split}
\]
For any $\nu>0$, thanks to Young's inequality, we obtain 
\begin{equation}\label{1_BoundDtUd0}
\begin{split}
 \int (\mu \Delta u^d -\sigma(\rho_0' \Delta\theta^d e_3+\rho_0''\nabla\theta^d)-g\theta^d)\cdot\partial_t u^d \leq \nu \|\partial_t u^d\|_{L^2}^2 + \nu^{-1} (\|\Delta u^d\|_{L^2}+ \|\theta^d\|_{H^2})^2.
\end{split}
\end{equation}
Using the interpolation inequality also, we have
\[
\begin{split}
\int& (\theta^\delta\partial_t u^\delta -(\rho_0+\theta^\delta) u^\delta\cdot\nabla u^\delta -\sigma \nabla\theta^\delta\Delta\theta^\delta) \cdot\partial_t u^d \\
& \leq   (\|\theta^\delta\partial_t u^\delta\|_{L^2} +\|(\rho_0+\theta^\delta) u^\delta\cdot\nabla u^\delta\|_{L^2}+ \|\nabla\theta^\delta\Delta\theta^\delta\|_{L^2})\|\partial_t u^d\|_{L^2} \\
&\lesssim   ( \|\theta^\delta\|_{H^2}\|\partial_t u^\delta\|_{L^2} + (1+\|\theta^\delta\|_{H^2})\|u^\delta\|_{H^2}^2+ \|\nabla\theta^\delta\|_{L^4} \|\Delta\theta^\delta\|_{L^4})(\|\partial_t u^\delta\|_{L^2} +\delta \|\partial_t u^\sN\|_{L^2}).
\end{split}
\]
Together with \eqref{NormUdelta}, this implies 
\begin{equation}\label{2_BoundDtUd0}
\begin{split}
\int (\theta^\delta\partial_t  u^\delta -(\rho_0+\theta^\delta) u^\delta\cdot\nabla u^\delta -\sigma \nabla\theta^\delta\Delta\theta^\delta) \cdot\partial_t u^d \lesssim  \delta^3 F_\sN^3.
\end{split}
\end{equation}
Owing to \eqref{1_BoundDtUd0} and \eqref{2_BoundDtUd0} with $\nu$ sufficiently small, we have
\[
 \|\partial_t u^d(t)\|_{L^2}^2 \lesssim \|\Delta u^d(t)\|_{L^2}^2+ \|\theta^d(t)\|_{H^2}^2 + \delta^3 F_\sN^3(t). 
\]
Letting $t\to 0$, we deduce \eqref{BoundDtUd0}. 
\end{proof}

Now, we are in position to prove  of Proposition \ref{PropBoundDiff}.
\begin{proof}[Proof of Proposition \ref{PropBoundDiff}]
Let us write $\eqref{EqDiff}_2$ as 
\begin{equation}
(\rho_0+\theta^\delta)\partial_t u^d +\nabla q^d -\mu\Delta u^d + \sigma (\rho_0'\Delta\theta^d e_3+ \rho_0''\nabla\theta^d)  = f^\delta -g\theta^d e_3,
\end{equation}
where $f^\delta = \delta \theta^\delta \partial_tu^\sN - (\rho_0+\theta^\delta)u^\delta \cdot\nabla u^\delta -\sigma \nabla\theta^\delta \Delta\theta^\delta   $. Differentiate the resulting equation with respect to $t$ and then multiply by $\partial_t u^d$, we obtain after integration that 
\[
\begin{split}
&\int \partial_t\theta^\delta |\partial_t u^d|^2 + \int (\rho_0+\theta^\delta)\partial_t^2 u^d\cdot\partial_t u^d  +\sigma \int (\rho_0' \partial_t u_3^d \Delta\partial_t \theta^d +\rho_0''\nabla\partial_t\theta^d \cdot\partial_t u^d) \\
&= \int (\mu\Delta\partial_t u^d-\nabla\partial_t p^d)\cdot\partial_t u^d +\int \partial_t f^\delta\cdot\partial_t u^d - g\int\partial_t\theta^d \partial_t u_3^d.
\end{split}
\]
Using $\eqref{EqDiff}_1$ and \eqref{LemLambda_5}-\eqref{LemLambda_6}, it can be seen that 
\[
\begin{split}
&\int (\rho_0' \partial_t u_3^d \Delta\partial_t \theta^d +\rho_0''\nabla\partial_t\theta^d \cdot\partial_t u^d) \\
&=- \int \rho_0' u_3^d \Delta(\rho_0'u_3^d+ u^\delta\cdot\nabla \theta^\delta) + \int (\rho_0'u_3^d+ u^\delta\cdot\nabla\theta^\delta) \text{div}(\rho_0'' \partial_t u^d) \\
&= \frac12 \frac{d}{dt} \int( |\nabla(\rho_0' u_3^d)|^2 +\rho_0'\rho_0'''|u_3^d|^2)- \int(\rho_0'u_3^d +\rho_0'''\partial_t u_3^d) u^\delta\cdot\nabla \theta^\delta \\
&= \frac12 \frac{d}{dt} \int |\rho_0'| |\nabla u_3^d|^2 -  \int(\rho_0'u_3^d +\rho_0'''\partial_t u_3^d) u^\delta\cdot\nabla \theta^\delta.
\end{split}
\]
This implies 
\[
\begin{split}
&\frac12 \frac{d}{dt}\Big( \int  (\rho_0+\theta^\delta)|\partial_tu^d|^2 -g\int \rho_0'|u_3^d|^2 +\sigma\int ( |\nabla(\rho_0'u_3^d)|^2 + \rho_0'\rho_0''' |u_3^d|^2)  \Big) + \mu \int|\nabla\partial_t u^d|^2   \\
&= -\frac12 \int \partial_t\theta^\delta |\partial_tu^d|^2 + g\int \partial_t u_3^d u^\delta\cdot\nabla\theta^\delta + \int \partial_t f^\delta \cdot\partial_t u^d  + \sigma  \int(\rho_0'u_3^d +\rho_0'''\partial_t u_3^d) u^\delta\cdot\nabla \theta^\delta.
\end{split}
\]
Note that $u^d(0)=0$, integrating in time from 0 to $t$ yields 
\begin{equation}\label{1_PropBoundDiff}
\begin{split}
&\int (\rho_0+\theta^\delta(t)) |\partial_t u^d(t)|^2 + \mu\int_0^t \int |\nabla\partial_t u^d(s)|^2 ds\\
&= \Big(\int (\rho_0+\theta^\delta(t)) |\partial_t u^d(t)|^2\Big) \Big|_{t=0}+  g\int\rho_0'|u_3^d(t)|^2 - \sigma \int (\rho_0')^2 |\nabla u_3^d(t))|^2  \\
&\qquad - \int_0^t  \Big( \int \partial_t\theta^\delta |\partial_tu^d|^2 -2\int (\partial_t f^\delta \cdot\partial_t u^d  + (g\partial_t u_3^d+   \sigma  (\rho_0'u_3^d +\rho_0'''\partial_t u_3^d) u^\delta\cdot\nabla \theta^\delta)\Big)(s) ds.
\end{split}
\end{equation}
%Thanks to Lemmas \ref{LemDtUd(0)}, \ref{LemMaxLambda1}, we have 
We now estimate the r.h.s of \eqref{1_PropBoundDiff}. Due to Sobolev embedding and three inequalities \eqref{BoundDtUinH^1byE}, \eqref{BoundDtThetaH2} and \eqref{NormUdelta}, we estimate that
\begin{equation}\label{2_PropBoundDiff}
\begin{split}
\int \partial_t\theta^\delta |\partial_tu^d|^2 \lesssim \|\partial_t\theta^\delta\|_{H^2} \|\partial_t u^d\|_{L^2}^2 &\lesssim \|u^\delta\|_{H^2}(\|\partial_t u^\delta\|_{L^2}+ \delta \|\partial_t u^\sN\|_{L^2})^2\\
&\lesssim \delta^3 F_\sN^3,
\end{split}
\end{equation}
and
\begin{equation}\label{3_PropBoundDiff}
\begin{split}
\int (g\partial_t u_3^d+   \sigma  (\rho_0'u_3^d +\rho_0'''\partial_t u_3^d))u^\delta \cdot \nabla\theta^\delta &\lesssim \|(u_3^d,\partial_t u_3^d)\|_{L^2}\|u^\delta\|_{H^2}\|\nabla\theta^\delta\|_{L^2}\\
&\lesssim(\|(u^\delta, \partial_t u^\delta)\|_{L^2}+\delta \|(u^\sN, \partial_t u^\sN)\|_{L^2}) \|u^\delta\|_{H^2}\|\theta^\delta\|_{H^1} \\
&\lesssim \delta^3 F_\sN^3.
\end{split}
\end{equation}
Next, let us estimate $\|\partial_t f^\delta\|_{L^2}$ as follows. We use Sobolev embedding, \eqref{BoundDtUinH^1byE}, \eqref{BoundDtThetaH2} and \eqref{NormUdelta} again to have that
\begin{equation}\label{4_PropBoundDiff}
 \|\partial_t(\theta^\delta \partial_t u^\sN)\|_{L^2} \lesssim \|\partial_t\theta^\delta \|_{L^2} \|\partial_t u^\sN\|_{H^2} + \|\theta^\delta\|_{L^2} \|\partial_t^2 u^\sN\|_{H^2}  \lesssim \delta F_\sN^2,
\end{equation}
that
\begin{equation}\label{5_PropBoundDiff}
\begin{split}
\|\partial_t ((\rho_0+ \theta^\delta) u^\delta \cdot\nabla u^\delta )\|_{L^2} \lesssim \|\partial_t \theta^\delta\|_{L^2} \|u^\delta\|_{H^3}^2 + (1+\|\theta^\delta\|_{H^2}) \|\partial_t u^\delta\|_{H^1}\|u^\delta\|_{H^3} \lesssim \delta^2 F_\sN^2,
\end{split}
\end{equation}
and that
\begin{equation}\label{6_PropBoundDiff}
\begin{split}
\|\partial_t(\nabla\theta^\delta\Delta \theta^\delta)\|_{L^2} \lesssim \| \partial_t\nabla\theta^\delta \|_{H^2}  \|\Delta\theta^\delta\|_{L^2} + \| \nabla\theta^\delta\|_{H^2} \|\partial_t \Delta\theta^\delta\|_{L^2} &\lesssim \|\partial_t\theta^\delta\|_{H^3}\|\theta^\delta\|_{H^3} \\
&\lesssim \|u^\delta\|_{H^3}\|\theta^\delta\|_{H^3}\\
&\lesssim \delta^2F_\sN^2.
\end{split}
\end{equation}
It follows from \eqref{2_PropBoundDiff}, \eqref{3_PropBoundDiff}, \eqref{4_PropBoundDiff}, \eqref{5_PropBoundDiff} and \eqref{6_PropBoundDiff} that 
\[
\begin{split}
\int \rho_0 |\partial_t u^d(t)|^2 + \mu\int_0^t \int |\nabla\partial_t u^d(s)|^2 &\leq \Big(\int (\rho_0+\theta^\delta(t)) |\partial_t u^d(t)|^2\Big) \Big|_{t=0}+  g\int\rho_0'|u_3^d(t)|^2 \\
&\qquad - \sigma \int ( |\nabla(\rho_0'u_3^d(t))|^2 + \rho_0'\rho_0''' |u_3^d(t)|^2) +C\delta^3 F_\sN^3(t), 
\end{split}
\]
where $C$ is a generic constant. 
Thanks to Lemmas \ref{LemDtUd(0)}, \ref{LemMaxLambda}, we obtain further
\[
\begin{split}
\int \rho_0 |\partial_t u^d(t)|^2 + \mu\int_0^t \int |\nabla\partial_t u^d(s)|^2 ds \leq \Lambda^2 \int \rho_0|u^d(t)|^2 + \Lambda\mu \int |\nabla u^d(t)|^2 +C \delta^3 F_\sN^3(t).
%&\lesssim \lambda_1^2 \int (\rho_0+\sigma^\delta(t))|u^d(t)|^2 + \lambda_1\mu \int |\nabla u^d(t)|^2 +C \delta^3 F_\sN^3(t)
\end{split}
\]
Estimate as same as \eqref{LemLambda_2}-\eqref{LemLambda_3}, we get 
%Meanwhile, 
%\[
%\frac{d}{dt} \|\sqrt{\rho_0} u^d\|_{L^2}^2 = 2\int \rho_0 \partial_tu^d\cdot u^d \leq \frac1{\Lambda} \|\sqrt{\rho_0}\partial_t u^d\|_{L^2}^2 +\Lambda \|\sqrt{\rho_0} u^d\|_{L^2}^2
%\]
%and
%\[
%\|\nabla u^d(t)\|_{L^2}^2 = 2\int_0^t \int \nabla\partial_t u^d(s):\nabla u^d(s)ds \leq \frac1{\Lambda}\int_0^t \|\nabla\partial_t u^d(s)\|_{L^2}^2 ds+\Lambda \int_0^t \|\nabla u^d(s)\|_{L^2}^2 ds. 
%\]
%Thanks to 
\[
\begin{split}
\frac{d}{dt} \|\sqrt{\rho_0} u^d(t)\|_{L^2}^2 +\mu \|\nabla u^d(t)\|_{L^2}^2 &\leq 2\Lambda\Big( \|\sqrt{\rho_0}u^d(t)\|_{L^2}^2 + \mu \int_0^t \|\nabla u^d(s)\|_{L^2}^2 ds\Big) + C\delta^3 F_\sN^3(t). 
\end{split}
\]
Applying Gronwall's inequality, we obtain 
\[
\begin{split}
 \|\sqrt{\rho_0} u^d(t)\|_{L^2}^2 +\mu \int_0^t \|\nabla u^d(s)\|_{L^2}^2 ds &\lesssim \delta^3 e^{2\Lambda t} \int_0^t e^{-2\Lambda s} F_\sN^3(s)ds \\
&\lesssim  \delta^3 e^{2\Lambda t} \sum_{j=j_m}^\sN \int_0^t |\Csf_j|^3 e^{(3\lambda_j -2\Lambda)s}ds. 
\end{split}
\]
For each $1\leq j\leq \sK$, we have $\lambda_j > \frac23 \Lambda$, yielding 
\[
\int_0^t e^{(3\lambda_j -2\Lambda)s}ds = \frac{e^{(3\lambda_j-2\Lambda) t}-1}{3\lambda_j-2\Lambda} \lesssim e^{(3\lambda_j-2\Lambda) t},
\]
and for each $\sK+1\leq j\leq \sN$, we have $\lambda_j < \frac23 \Lambda$, yielding
\[
\int_0^t e^{(3\lambda_j -2\Lambda)s}ds =  \frac{e^{(3\lambda_j-2\Lambda) t}-1}{3\lambda_j-2\Lambda} \lesssim 1.
\]
Consequently, 
\begin{equation}\label{BoundUdL2}
\begin{split}
 \|\sqrt{\rho_0} u^d(t)\|_{L^2}^2 +\mu \int_0^t \|\nabla u^d(s)\|_{L^2}^2 ds &\lesssim \delta^3  \Big( \sum_{j=j_m}^\sK |\Csf_j|^3 e^{3\lambda_j t} + \sum_{j=\sK+1}^\sN  |\Csf_j|^3 e^{2\Lambda t}\Big).
\end{split}
\end{equation}

To show the bound of $\|\theta^d(t)\|_{L^2}$, we use Sobolev embedding to deduce $\eqref{EqDiff}_1$ that 
\[
\begin{split}
\frac{d}{dt} \|\theta^d\|_{L^2} \leq \|\theta^d\|_{L^2} &\leq \max\rho_0' \|u_3^d\|_{L^2} + \|u^\delta\cdot\nabla \theta^\delta\|_{L^2} \\
&\lesssim \|u_3^d\|_{L^2} + \|u^\delta\|_{H^2} \|\theta^\delta\|_{H^1}. 
\end{split}
\]
Using \eqref{NormUdelta}, we obtain further
\[
\frac{d}{dt} \|\theta^d\|_{L^2} \lesssim \|u_3^d\|_{L^2} + \delta^2 F_\sN^2. 
\]
Note that $\theta^d(0)=0$. Integrating in time from 0 to $t$ and using \eqref{BoundUdL2}, it thus follows that $\|\theta^d(t)\|_{L^2}$ is also bounded above as same as $\|u^d(t)\|_{L^2}$. Proof of Proposition \ref{PropBoundDiff} is complete. 
\end{proof}

We are in position to prove Theorem \ref{ThmUnstable}.
\begin{proof}[Proof of Theorem \ref{ThmUnstable}]
Note that 
\begin{equation}
\begin{split}
\| u^\sN(t)\|_{L^2}^2 &=   \sum_{i=j_m}^\sN \Csf_i^2 e^{2\lambda_i t} \| v_i\|_{L^2}^2 +  2 \sum_{j_m\leq i<j\leq \sN} \Csf_i \Csf_j e^{(\lambda_i+\lambda_j)t} \int  v_i  \cdot  v_j .
\end{split}
\end{equation}
It can be seen that 
\[
\begin{split}
\| u^\sN(t)\|_{L^2}^2 \geq  &\sum_{j=j_m}^\sN \Csf_j^2 e^{2\lambda_j t}\| v_j\|_{L^2}^2 + 2\sum_{j_m+1\leq i<j\leq \sN} \Csf_i\Csf_j e^{(\lambda_i+\lambda_j)t}   \int  v_i\cdot  v_j\\
&\qquad-|\Csf_{j_m}|\| v_{j_m}\|_{L^2} \Big(\sum_{j=j_m+1}^\sN |\Csf_j|  \| v_j\|_{L^2}\Big) e^{(\lambda_{j_m}+\lambda_{j_m+1})t}.
\end{split}
\]
By Cauchy-Schwarz's inequality, we obtain 
\[
\begin{split}
2\sum_{j_m+1 \leq i<j\leq \sN} \Csf_i\Csf_j e^{(\lambda_i+\lambda_j)t}   \int v_i\cdot v_j &\geq- 2\sum_{j_m+1 \leq i<j\leq \sN} |\Csf_i||\Csf_j| e^{(\lambda_i+\lambda_j)t} \|v_i\|_{L^2}\| v_j\|_{L^2}\\
&\geq - e^{(\lambda_{j_m+1}+\lambda_{j_m+2})t} \Big(\sum_{j=j_m+1}^\sN |\Csf_j|\| v_j\|_{L^2}\Big)^2.
\end{split}
\]
This yields
\[
\begin{split}
\| u^\sN(t)\|_{L^2(\Omega)}^2 &\geq \sum_{j=j_m}^\sN \Csf_j^2 e^{2\lambda_j t}\| v_j\|_{L^2}^2 -e^{(\lambda_{j_m+1}+\lambda_{j_m+2})t} \Big(\sum_{j=j_m+1}^\sN |\Csf_j| \| v_j\|_{L^2}\Big)^2 \\
&\qquad - |\Csf_{j_m}| e^{(\lambda_{j_m}+\lambda_{j_m+1})t}\| v_{j_m}\|_{L^2} \Big(\sum_{j=j_m+1}^\sN |\Csf_j|  \| v_j\|_{L^2}\Big).
\end{split}
\]
Due to the assumption \eqref{2ndCondC}, we deduce that
\[
\begin{split}
\| u^\sN(t)\|_{L^2}^2 &\geq \sum_{j=j_m}^\sN \Csf_j^2 e^{2\lambda_j t}\| v_j\|_{L^2}^2  - \frac14 \Csf_{j_m}^2 e^{(\lambda_{j_m+1}+\lambda_{j_m+2})t} \| v_{j_m}\|_{L^2}^2\\
&\qquad- \frac12 \Csf_{j_m}^2 e^{(\lambda_{j_m}+\lambda_{j_m+1})t} \| v_{j_m}\|_{L^2}^2.
\end{split}
\]
This yields 
\[\begin{split}
\| u^\sN(t)\|_{L^2}^2 &\geq \Csf_{j_m}^2\Big(e^{2\lambda_{j_m} t}- \frac12 e^{(\lambda_{j_m}+\lambda_{j_m+1})t} - \frac14 e^{(\lambda_{j_m+1}+\lambda_{j_m+2})t}\Big) \| v_{j_m}\|_{L^2}^2  \\
&\qquad\qquad +\sum_{j=j_m+1}^\sN \Csf_j^2 e^{2\lambda_j t}\| v_j\|_{L^2}^2.
\end{split}
\]
Notice that for all $t\geq 0$,
\[
e^{2\lambda_{j_m} t}-\frac12 e^{(\lambda_{j_m}+\lambda_{j_m+1})t} - \frac14 e^{(\lambda_{j_m+1}+\lambda_{j_m+2})t} \geq \frac14 e^{2\lambda_{j_m} t}.
\]
Hence, we have 
\begin{equation}\label{L^2NormU_2^M}
\| u^\sN(t)\|_{L^2} \geq C_5 F_\sN(t),
\end{equation}
for all $t\leq \min(T^\delta, T^\star, T^{\star\star})$.

Let 
\[
\tilde \Csf(\sN) = \max_{\sK+1\leq j\leq \sN}\frac{|\Csf_j|}{|\Csf_{j_m}|}\geq 0.
\]
We recall the definition of $T^{\star}$ and $T^{\star\star}$ from \eqref{DefTstar} and the fact that $T^{\delta}$ satisfies uniquely $\delta F_\sN(T^\delta)=\epsilon_0$, 
provided that  $\epsilon_0$ is taken to be 
\begin{equation}\label{EqEpsilon}
\epsilon_0 < \min\Big( \frac{C_2\delta_0}{C_3},  \frac{C_2^2}{2C_4(1+(\sN-\sK) \tilde \Csf(\sN))^3 }, \frac{C_5^2}{4C_4(1+ (\sN-\sK) \tilde \Csf(\sN))^3}  \Big).
\end{equation}
We prove that 
\begin{equation}\label{T_deltaMin}
T^{\delta} \leq \min\{T^{\star}, T^{\star\star}\}.
\end{equation}
In fact, if $T^{\star} < T^{\delta}$, we have from \eqref{NormUdelta} that
\[
\cE((\sigma^{\delta}, u^{\delta})(T^{\star})) \leq C_3 \delta F_\sN(T^{\star}) \leq C_3\delta F_\sN(T^{\delta}) = C_3\epsilon_0 <C_2\delta_0.
\]
And if $T^{\star\star}<T^{\delta}$, we have by \eqref{BoundDiff} and the definition of $C_2$ \eqref{DefC_12} that
\begin{equation}\label{EstBoundSigmaDelta}
\begin{split}
\|(\theta^{\delta},  u^{\delta})(T^{\delta})\|_{L^2} &\leq \delta \|(\theta^\sN,  u^\sN)(T^{\delta})\|_{L^2} + \|(\theta^d,  u^d)(T^{\delta})\|_{L^2}  \\
&\leq C_2 \delta F_\sN(T^{\delta}) + \sqrt{C_4}  \delta^{3/2} \Big( \sum_{j=j_m}^\sK |\Csf_j| e^{\lambda_j t} + \sum_{j=\sK+1}^\sN  |\Csf_j| e^{\frac23\Lambda t}\Big)^{3/2}.
\end{split}
\end{equation}
This implies 
\[
\begin{split}
\|(\theta^{\delta},  u^{\delta})(T^{\delta})\|_{L^2} &\leq C_2\delta F_\sN(T^{\delta}) + \sqrt{C_4}(1+ (\sN-\sK) \tilde \Csf(\sN))^{3/2}\delta^{3/2} F_\sN^{3/2}(T^\delta) \\
&\leq C_2\epsilon_0+  \sqrt{C_4}(1+(\sN-\sK) \tilde \Csf(\sN))^{3/2}\epsilon_0^{3/2}.
\end{split}
\]
Using \eqref{EqEpsilon} again, we deduce 
\[
\|(\theta^{\delta},  u^{\delta})(T^{\delta})\|_{L^2}< 2 C_2 \epsilon_0 = 2C_2 \delta F_\sN(T^{\delta}).
\]
which also contradicts  the definition of $T^{\star\star}$. 

Once we have \eqref{T_deltaMin}, we obtain from \eqref{BoundDiff} and \eqref{L^2NormU_2^M} that 
\[
\begin{split}
\| u^{\delta}(T^{\delta})\|_{L^2} &\geq \delta \| u^\sN(T^{\delta})\|_{L^2} - \| u^d(T^\delta)\|_{L^2}\\
&\geq C_5 \delta F_\sN(T^{\delta}) - \sqrt{C_4}  \delta^{3/2} \Big( \sum_{j=j_m}^\sK |\Csf_j| e^{\lambda_j t} +  \sum_{j=\sK+1}^\sN  |\Csf_j| e^{\frac23 \Lambda t}\Big)^{3/2}.
\end{split}
\]
Therefore, 
\begin{equation}
\begin{split}
\| u^{\delta}(T^{\delta})\|_{L^2} &\geq C_5 \epsilon_0 - \sqrt{C_4}(1+ (\sN-\sK) \tilde \Csf(\sN))^{3/2}\epsilon_0^{3/2} \geq \frac{C_5\epsilon_0}2 >0.
\end{split}
\end{equation}
The inequality \eqref{BoundU_2Tdelta} is proven by taking $\delta_0$ satisfying Proposition \ref{PropEstimates},  $\epsilon_0$ satisfying \eqref{EqEpsilon} and $m_0=C_5/2$. This ends the proof of Theorem \ref{ThmUnstable}.
\end{proof}

\end{document}